\documentclass[11pt]{amsart}
\usepackage{amssymb,amsmath,amsfonts,amscd,euscript}
\usepackage{tikz}
\usepackage{tikz-cd}
\usepackage[all]{xy}
\usepackage{color}

\usepackage[backref=page]{hyperref}
\usepackage{todonotes}

\usepackage{amsthm,enumitem,bm,pict2e,xcolor}
\usepackage{indentfirst}

\usepackage{algorithm}
\usepackage{algpseudocode}
\usepackage{float}

\newtheorem{theoremA}{Theorem}

\newtheorem{conjA}[theoremA]{Conjecture}

\newcommand{\nc}{\newcommand}

\numberwithin{equation}{section}
\newtheorem{thm}{Theorem}[section]
\newtheorem{prop}[thm]{Proposition}
\newtheorem{lem}[thm]{Lemma}
\newtheorem{cor}[thm]{Corollary}
\newtheorem{rem}[thm]{Remark}

\newtheorem{example}[thm]{Example}
\newtheorem{dfn}[thm]{Definition}

\newtheorem{conj}[thm]{Conjecture}

\nc{\gl}{\mathfrak{gl}}
\nc{\GL}{\mathfrak{GL}}
\nc{\g}{\mathfrak{g}}
\nc{\gh}{\widehat\g}
\nc{\hh}{{\fh}^a}
\nc{\fh}{\mathfrak{h}}
\nc{\la}{\lambda}
\nc{\al}{\alpha }
\nc{\be}{\beta }
\nc{\om}{\omega }

\nc{\ta}{\theta}
\nc{\veps}{\varepsilon}
\nc{\ch}{{\mathop {\rm ch}}}
\nc{\Tr}{{\mathop {\rm Tr}\,}}
\nc{\Id}{{\mathop {\rm Id}}}
\nc{\Aut}{{\mathop {\rm Aut}}}
\nc{\End}{{\mathop {\rm End}}}

\nc{\bra}{\langle}
\nc{\ket}{\rangle}
\nc{\bs}{{\bf s}}
\nc{\bp}{{\bf p}}
\newcommand{\bI}{{\bf I}}
\nc{\pa}{\partial}
\nc{\ld}{\ldots}
\nc{\cd}{\cdots}
\nc{\hk}{\hookrightarrow}
\nc{\T}{\otimes}
\nc{\Gr}{\mathrm{Gr}}
\nc{\aGr}{{\mathcal Gr}}
\nc{\aFl}{{\mathcal Fl}}
\nc{\ov}{\overline}

\nc{\msl}{\mathfrak{sl}}
\nc{\mgl}{\mathfrak{gl}}
\nc{\U}{\mathrm U}
\nc{\Res}{\mathrm{Res\ }}

\newcommand{\bC}{{\mathbb C}}

\newcommand{\bZ}{{\mathbb Z}}

\newcommand{\bP}{{\mathbb P}}
\newcommand{\bA}{{\mathbb A}}
\newcommand{\bK}{{\mathbb K}}
\newcommand{\bO}{{\mathbb O}}
\newcommand{\bX}{{\mathbb X}}

\newcommand{\bfD}{{\bf D}}
\newcommand{\bfL}{{\bf L}}

\newcommand{\fg}{{\mathfrak g}}
\newcommand{\fb}{{\mathfrak b}}

\newcommand{\fI}{{\mathfrak I}}
\newcommand{\fn}{{\mathfrak n}}
\newcommand{\fa}{{\mathfrak a}}
\newcommand{\eO}{\EuScript{O}}

\newcommand{\eL}{\EuScript{L}}

\newcommand{\bk}{{\bf k}}
\newcommand{\barla}{{\bar\lambda}}

\tikzcdset{scale cd/.style={every label/.append style={scale=#1},
		cells={nodes={scale=#1}}}}
\begin{document}

\title[Type A algebraic coherence conjecture]
{Type A algebraic coherence conjecture of Pappas and Rapoport}

\author{Evgeny Feigin}
\address{School of Mathematical Sciences, Tel Aviv University, Tel Aviv, 69978, Israel}
\email{evgfeig@gmail.com}

\makeatletter\let\@wraptoccontribs\wraptoccontribs\makeatother
\contrib[an appendix in collaboration with]{Andrey Karenskih}
\email{karenskih@mail.tau.ac.il}

\begin{abstract}
The Pappas--Rapoport coherence conjecture, proved by Zhu, states that the dimensions of spaces
of sections of certain line bundles coincide. 
The two sides of the equality correspond to line
bundles on spherical Schubert varieties in affine Grassmannians and to line bundles on 
unions of Schubert varieties in affine flag varieties. Algebraically, the claim can be reformulated 
as an equality between the
dimensions of certain Demazure modules and certain sums of Demazure modules. The goal of this
paper is to formulate an algebraic construction that provides an explicit link between the 
aforementioned Demazure modules. Our construction works only in type A, but it applies to a much 
wider class of representations than those arising in the geometric coherence conjecture. 
In the general case, one side of the conjectural equality involves affine Kostant--Kumar modules.   	
\end{abstract}

\maketitle

\section{Introduction}
The goal of this paper is to introduce an explicit algebraic counterpart of the geometric 
Pappas--Rapoport coherence conjecture in type A (Zhu's theorem) \cite{PR08,Zhu14}. 
The input of our construction is a collection
of cyclic representations of the current algebra, the elements of the collection are
labeled by the simple roots of the underlying affine Kac--Moody Lie algebra. The output is
a (non-cyclic) representation of the Iwahori algebra; the character of this representation 
coincides with that of the Cartan component of the tensor product of the initial 
representations.  If one starts with a very special collection
of spherical Demazure submodules of integrable irreducible representations, whose highest weights are
multiples of a fixed level-one weight, then the resulting Iwahori representation is a sum of Demazure
submodules in another integrable irreducible representation. The input and output in this case
are exactly the left- and right-hand sides of the coherence conjecture of Pappas and Rapoport.
Let us provide more details.

Throughout the paper, we work with the Lie algebras $\mgl_n$ and $\msl_n$, and their affine 
Kac--Moody algebras \cite{Kac90}. 
Let $\aGr$ be the affine Grassmannian for the adjoint group of  $\msl_n$; 
in particular, $\aGr$ is the disjoint 
union of $n$ components $\aGr_b$, where 
$b=0,\dots,n-1$ \cite{Kac90,Kum02}. The ind-varieties $\aGr_b$ are covered
by spherical Schubert varieties of the form $X_\la$, where $\la$ are partitions with at most $n$ parts.
Each $X_\la$ is acted upon by the current group $SL_n[[z]]$, which is a subgroup of the affine Kac--Moody
Lie group $\widehat{SL}_n$ acting on $\aGr_b$ (in fact, $\aGr_b$ are partial flag varieties for 
$\widehat{SL}_n$). 

Let $\eO(1)$ be the ample line bundle on  $\aGr_b$, generating the corresponding Picard group.
Then the restricted dual of the space of sections $H^0(\aGr_b,\eO(1))$ is identified with the integrable 
irreducible highest weight representation $L(\Lambda_b)$ of the affine Kac--Moody Lie algebra 
$\widehat\msl_n$ and one has an embedding $\aGr_b\subset \bP(L(\Lambda_b))$. We note that for any 
$k\in\bZ_{>0}$, the highest weight $\widehat\msl_n$ module $L(k\Lambda_b)$ is identified with
the dual space of sections of the line bundle $\eO(k)=\eO(1)^{\T k}$. 
Now let us restrict $\eO(k)$ to $X_\la$. Then $H^0(X_\la,\eO(k))^*$ admits an action of the current algebra
$\msl_n[z]$ and is isomorphic to the Demazure module $D_{k,\la}\subset L(k\Lambda_b)$, which is a cyclic
representation of $\msl_n[z]$ with a cyclic vector of weight $k\la$.

The affine Grassmannians $\aGr_b$ admit degeneration to the complete flag variety $\aFl$.
More precisely, there exists a family over $\bA^1$ whose general fiber is isomorphic 
to $\aGr_b$ and the special fiber is isomorphic to $\aFl$ (see \cite{Ga01,Go10,Zhu17,AB24}). The family is defined
for arbitrary groups, but in type $A$ it admits a very concrete description 
in terms of lattices \cite{Zhu17,AR}.
If one considers a subfamily whose general fiber is the Schubert variety $X_\la$, then the special
fiber is identified with a union of Schubert subvarieties $Y_w$ inside the affine flag variety $\aFl$
\cite{Zhu14,Go01,PRS13}.
The varieties $Y_w$ are labeled by the elements of the so-called admissible set $\mathcal{A}_\la$,
which is a cardinality $|W|$ subset of the extended affine Weyl group ($W=S_n$ is the finite Weyl group). 
We note that $Y_w$ are, in general, not invariant
with respect to the whole current algebra, but admit an action of the Iwahori subalgebra $\fI$.

The Picard group of the affine flag variety has a collection of generators $\eL_b$,
$b=0,\dots,n-1$, where $\eL_b$ is the pullback of $\eO(1)$ with respect to the natural projection
map $\aFl\to\aGr_b$. In particular, for any collection of non-negative integers $\bk=(k_b)_b$, the space of sections of the line bundle $\eL_\bk=\bigotimes_{b} \eL_b^{\T k_b}$ is identified
with the dual of the integrable highest weight $\widehat{\msl}_n$ module $L(\Lambda_\bk)$, where 
the weight $\Lambda_\bk$ is equal to $\sum k_b\Lambda_b$, $\Lambda_b$ are the affine fundamental weights.  
The level of the weight $\Lambda_\bk$ is equal to $|\bk|=\sum k_b$. 
The Demazure modules $D_w(\Lambda_\bk)\subset L(\Lambda_\bk)$ are representations of the Iwahori Lie
algebra $\fI$; they are identified with the dual space of sections $H^0(Y_w,\eL_\bk)^*$.

The coherence conjecture proved by Zhu states that 
\begin{equation}\label{eq:cohconj}
\dim H^0\bigl(\bigcup_{w\in\mathcal{A}_\la} Y_w,\eL_\bk\bigr) = \dim H^0(X_\la,\eO(|\bk|)).
\end{equation}  
The equality of dimensions was recently upgraded in \cite{HY24} to the equality (up to a certain
shift) of characters
with respect to the Cartan subalgebra (in fact, as shown in \cite{HY24}, a larger algebra may 
show up). Let us emphasize  that the left-hand side of \eqref{eq:cohconj} is a representation of the 
Iwahori algebra     
and the right-hand side is a representation of the current algebra. 
Both the Zhu proof \cite{Zhu14} and the Hong--Yu construction use line bundles on the global
affine Grassmannian: the right-hand side of \eqref{eq:cohconj} is the space of sections
on the general fiber, and the left-hand side is the space of sections on the special fiber.
Hence, \eqref{eq:cohconj} can be seen as a degeneration of the space in the right-hand side to 
the space in the left-hand side. 
Our main goal is to give an algebraic description of this degeneration procedure. To this end, we 
introduce a more general construction which leads to the desired description in certain special
cases.

Our construction starts with a collection of $n$ cyclic representations $D_b$,
$b=0,\dots,n-1$ of $\msl_n[z]$ with cyclic vectors $d_b$.
The output is a (no longer cyclic) representation  $\bfD(0)$ of the Iwahori algebra.
The character of $\bfD(0)$ is equal to the character of the Cartan component $D$  
inside the tensor product of the modules $D_b$: $D = \U(\msl_n[z]).\bigotimes d_b$.
The module $\bfD(0)$ can be seen as a degeneration of $D$. 
Let us describe our construction in detail for $D_b$ being irreducible $\msl_n$
modules  with the trivial action of $z\msl_n[z]$ (for the general case see section \ref{sec:Dem}). 

We start with an integral dominant $\msl_n$ weight $\la$ and a decomposition $\la=\sum_b \la^{(b)}$
into a sum of $n$ weights $\la^{(b)}$. 
Then the irreducible $\msl_n$ module $V_\la$ sits inside the tensor product of $V_{\la^{(b)}}$ as a Cartan component.
Let $v_b\in V_{\la^{(b)}}$ be the lowest weight cyclic vectors, so each module is generated from $v_b$ 
by the action of the Chevalley generators
$e_i\in\msl_n$, $i=1,\dots,n-1$ corresponding to the simple roots. 
We deform $V_\la$ inside the tensor product 
$\bigotimes_b V_{\la^{(b)}}$.
The deformation is defined via the operators  $e_i(\veps)= \veps e_i^{(i)}+\sum_{b\ne i} e_i^{(b)}$,
where  $e_i^{(b)}$ is the operator acting as $e_i$ at the $b$-th factor of the tensor product 
and as identity on all other factors. Then we define the space $V_\barla(\veps)$ inside 
$\bigotimes_b V_{\la^{(b)}}$ as the subspace generated from the tensor product of lowest weight vectors
$v_b$   by the action of operators $e_i(\veps)$. Let $V_\barla=V_\barla(0)$ be the $\veps\to 0$ limit of the spaces
$V_\barla(\veps)$. We show that $V_\barla$ carries a natural action of (certain quotient of) the 
Iwahori algebra $\fI$.

\begin{theoremA}
If all $\la^{(b)}$ are multiples of a single fundamental weight $\omega_j$, i.e. $\la^{(b)}=k_b\om_j$, 
then $V_\barla$ is isomorphic to a sum of affine Demazure modules inside $L(\sum_{b} k_b \Lambda_b)$. 
\end{theoremA}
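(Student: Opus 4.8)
The plan is to identify the $\veps\to0$ degeneration $V_\barla$ explicitly with a sum of Demazure modules by matching it against the geometry of the special fiber described in the introduction, and then to reduce everything to a character computation. First I would set $\la=\sum_b k_b\om_j$ and consider the level-$|\bk|$ weight $\Lambda_\bk=\sum_b k_b\Lambda_b$; the target is the claim that $V_\barla\cong\bigoplus_{w\in\mathcal A_\mu}D_w(\Lambda_\bk)$ for a suitable admissible set $\mathcal A_\mu$ attached to the partition $\mu$ obtained from $k_b\om_j$. The natural strategy is: (a) show $V_\barla(\veps)$ for generic $\veps$ is the Cartan component $V_\la\subset\bigotimes_b V_{k_b\om_j}$, so $\dim V_\barla(0)=\dim V_\la$; (b) show $V_\barla(0)$ is a module over the relevant quotient of $\fI$ (this is asserted already in the excerpt as an input we may use); (c) produce, inside $V_\barla(0)$, a finite collection of cyclic $\fI$-submodules whose cyclic vectors have the correct extremal weights $w(\Lambda_\bk)$ for $w\in\mathcal A_\mu$, and show each is a quotient of (hence, by a dimension/character count, equal to) the Demazure module $D_w(\Lambda_\bk)$; (d) show these submodules span $V_\barla(0)$ and that the sum of their characters matches $\ch V_\la$, so the spanning is in fact a direct-sum decomposition at the level of characters.

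For step (c) the key point is to track where the tensor product of lowest weight vectors $\bigotimes_b v_b$ goes under the degenerate operators $e_i(0)=\sum_{b\ne i}e_i^{(b)}$, and more importantly which weight vectors survive in the limit. Here I would use the standard fact that a Demazure module $D_w(\Lambda_b)$ in level one sits inside $L(\Lambda_b)$ and, after the Pappas--Rapoport degeneration, the spherical Demazure module $D_{|\bk|,\la}$ in $L(|\bk|\Lambda_b)$ degenerates to $\bigcup_{w\in\mathcal A_\la}Y_w$; algebraically, this is exactly \eqref{eq:cohconj} together with the Hong--Yu refinement to characters. So the cleanest route is to prove:
\begin{equation}\label{eq:keychar}
\ch V_\barla(0)=\ch\,\bigl(\text{the }\fI\text{-module on }H^0(\textstyle\bigcup_{w\in\mathcal A_\la}Y_w,\eL_\bk)^*\bigr),
\end{equation}
after which the identification of $V_\barla(0)$ with the sum of Demazure modules $\sum_{w\in\mathcal A_\la}D_w(\Lambda_\bk)$ follows from the geometric description of that section space as a sum of Demazure modules, provided we also check the two sides are isomorphic as $\fI$-modules and not merely character-equivalent. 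The isomorphism is forced once we exhibit a surjection from the sum of Demazure modules onto $V_\barla(0)$: each extremal vector $w(\Lambda_\bk)$ generates a Demazure module under $\fI$, these generate $V_\barla(0)$ by step (d), and \eqref{eq:keychar} upgrades the surjection to an isomorphism.

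The main obstacle, I expect, is step (d): showing that the $\fI$-submodules generated by the extremal weight vectors actually \emph{span} the whole limit space $V_\barla(0)$, and dually that no ``extra'' vectors appear in the $\veps\to0$ limit beyond those predicted by the admissible set. Concretely one must understand the associated graded of the filtration of $\bigotimes_b V_{k_b\om_j}$ induced by powers of $\veps$, restricted to the cyclic submodule $V_\barla(\veps)$; the flatness of the family (the general fiber has constant dimension $\dim V_\la$) gives the dimension bound, but identifying the limit as precisely the span of the Demazure pieces requires either an explicit monomial basis of $V_\barla(\veps)$ compatible with the degeneration, or a PBW-type argument controlling leading terms. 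I would attack this by choosing a basis of $V_\la$ adapted to the tensor decomposition (e.g.\ via Littelmann paths or a crystal-theoretic parametrization of the admissible set $\mathcal A_\la$), and reading off the $\veps$-leading behaviour of each basis vector; the bookkeeping that the leading terms are exactly indexed by $\bigsqcup_{w\in\mathcal A_\la}(\text{basis of }D_w(\Lambda_\bk))$ is where the type-$A$ hypothesis enters, since it is there that the admissible set and the tensor-product multiplicities admit the needed combinatorial control.
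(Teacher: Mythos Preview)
Your outline is structurally sound---steps (a) and (b) and the inclusion $\sum_\sigma\U(\fI)v_{\sigma\barla}\subset V_\barla(0)$ are all in the paper---but you have misidentified where the work lies, and your proposed attack on step (d) is both unnecessary and essentially as hard as Zhu's theorem itself. The paper does not attempt any independent combinatorial computation of $\ch V_\barla(0)$. Instead, the point of the whole setup in Sections~\ref{sec:globalag}--\ref{sec:construction} is that the maps $\psi_b$ used to define the endomorphism algebra $\fa(\veps)$ (and hence the family $V_\barla(\veps)$) are \emph{the same} maps that define the type-$A$ global affine Grassmannian in its lattice model. Thus the algebraic family $\bfD(\veps)$ is, by construction, the family of dual section spaces of the global line bundle on the global Schubert variety ${\bf X}_\la$: the general fiber is $H^0(X_\la,\eO(|\bk|))^*$ and the special fiber is $H^0(\mathcal A(\la),\eL(\Lambda_\bk))^*$. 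Zhu's theorem then gives the dimension equality $\dim V_\barla(0)=\dim\sum_\sigma D_{z^{\sigma\om_j}}(\Lambda_\bk)$ directly, and since the right-hand side already sits inside $V_\barla(0)$ via the extremal vectors, equality follows. No Littelmann paths, crystals, or monomial bases are needed; what you describe in your last paragraph would amount to reproving the coherence conjecture in type $A$ from scratch.

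Two smaller issues. First, the target is a \emph{sum}, not a direct sum: all the Demazure modules $D_{z^{\sigma\om_j}}(\Lambda_\bk)$ contain the highest weight line of $L(\Lambda_\bk)$, so your $\bigoplus$ should be $\sum$. Second, your step (c) hides its own difficulty: to get a surjection $D_w(\Lambda_\bk)\twoheadrightarrow\U(\fI)v_{\sigma\barla}$ you must verify the Demazure relations on the extremal vectors inside the degenerate module, which is not obvious from the tensor-product description alone. In the paper this again comes for free from the geometric identification, since the special fiber is a union of Schubert varieties and its dual section space is automatically a sum of Demazure modules.
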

 
\begin{conjA}\label{conj:intro}
For any collection of weights $\barla$ the Iwahori algebra module $V_\barla$  is generated from the tensor products of extremal
weight vectors.
\end{conjA}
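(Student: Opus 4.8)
To approach this conjecture it helps to notice that the deformation is a torus twist. Writing $\Delta(e_i)=\sum_b e_i^{(b)}$ for the diagonal action of $e_i$ and $t(\veps)=\bigotimes_{b=1}^{n-1}\veps^{\om_b^\vee\,(b)}$ for the one-parameter subgroup of $\prod_b T$ acting as the identity on the $0$-th tensor factor, one checks that $e_i(\veps)=t(\veps)\,\Delta(e_i)\,t(\veps)^{-1}$ for $i=1,\dots,n-1$; since $V_\la=\U(\langle\Delta(e_1),\dots,\Delta(e_{n-1})\rangle).\bigotimes_b v_b$, this gives $V_\barla(\veps)=t(\veps)\cdot V_\la$ as subspaces of $\bigotimes_b V_{\la^{(b)}}$ for every $\veps\neq 0$, and hence $V_\barla=\lim_{\veps\to 0}t(\veps)\cdot V_\la$ is the flat limit of $V_\la$ under this $\bC^\times$-action inside the Grassmannian of $(\dim V_\la)$-dimensional subspaces. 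In particular $\dim V_\barla=\dim V_\la$ and $\ch V_\barla=\ch V_\la$, and $V_\barla$ is spanned by the leading terms $\mathrm{in}(u)$ --- the lowest-$\bC^\times$-weight homogeneous components --- of the vectors $u\in V_\la$. The conjecture then asserts that this span is already generated, as an $\fI$-module, by the tensor products of extremal weight vectors that it contains.

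The plan is in three steps. First, the diagonal $W$-translates $\bigotimes_b(\sigma\cdot v_b)$, $\sigma\in W$, of the cyclic vector $\bigotimes_b v_b$ already lie in $V_\la$ and are $\bC^\times$-homogeneous, hence lie in $V_\barla$; they generate an $\fI$-submodule $M_0\subseteq V_\barla$. In general $M_0\subsetneq V_\barla$ --- already in the range of Theorem~A the module $V_\barla$ is a sum of $|\mathcal A_\la|$ affine Demazure modules, usually more than one --- so further generators, coming from non-diagonal extremal tensors $\bigotimes_b(\tau_b\cdot v_b)$, are required. Second, for such a tensor one must decide whether it occurs as $\mathrm{in}(u)$ for some $u\in V_\la$; I expect this membership to be governed by a combinatorial condition on the tuple $(\tau_b)_b$, the type-$A$ analogue of membership in the admissible set $\mathcal A_\la$. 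Concretely I would index the required tensors by a finite subset $I(\barla)$ of the extended affine Weyl group, together with maps $\tau_b\colon I(\barla)\to W$, and prove that $x_w:=\bigotimes_b(\tau_b(w)\cdot v_b)$ lies in $V_\barla$ for $w\in I(\barla)$ by exhibiting an explicit ordered monomial $m_w(\veps)$ in the operators $e_i(\veps)$ whose leading term $\mathrm{in}(m_w(\veps)\cdot\bigotimes_b v_b)$ is a nonzero multiple of $x_w$; this is a finite computation inside $\bigotimes_b V_{\la^{(b)}}$ in which the tensor factor carrying the $\veps$ in $e_i(\veps)$ is exactly what forces the correct leading exponent. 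For $\barla$ as in Theorem~A the set $I(\barla)$ should be identified with $\mathcal A_\la$, so that Theorem~A becomes the case where each $\U(\fI).x_w$ is a Demazure module. Third, one shows $\sum_{w\in I(\barla)}\U(\fI).x_w=V_\barla$; the inclusion $\subseteq$ is the second step.

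For the reverse inclusion I would argue by characters. Each $D_w:=\U(\fI).x_w$ is a degeneration of an affine Demazure module, hence an affine Kostant--Kumar module, so $\ch\bigl(\sum_w D_w\bigr)$ is an inclusion--exclusion alternating sum over the characters of the intersections $D_{w_1}\cap\dots\cap D_{w_r}$; this alternating sum must be matched with $\ch V_\barla=\ch V_\la$. In the range of Theorem~A the intersections are again Demazure modules --- the type-$A$ phenomenon underlying Zhu's geometric argument --- and the resulting identity is precisely the coherence theorem of \cite{Zhu14}; the general statement is the anticipated extension of that theorem with affine Kostant--Kumar modules in place of Demazure modules, and proving that the alternating sum collapses correctly is the substance of the matter.

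The hard part is everything beyond the range of Theorem~A. Once some $\la^{(b)}$ fails to be a multiple of a single fundamental weight the modules $D_w$ are genuine Kostant--Kumar modules, not adapted to any one Borel, so neither the Demazure character formula nor the ``intersection of Demazure modules is Demazure'' mechanism is available; both pinning down the index set $I(\barla)$ in the second step and controlling the characters of the intersections $D_{w_1}\cap\dots\cap D_{w_r}$ in the third step require new input. I would test the picture first on small cases --- for instance $n=3$ with each $\la^{(b)}\in\{0,\om_1,\om_2,\om_1+\om_2\}$ --- where $I(\barla)$, the intersection lattice of the modules $D_w$, and the character $\ch V_\la$ can all be computed explicitly and the inclusion--exclusion identity checked by hand.
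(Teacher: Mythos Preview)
The statement is a conjecture that the paper explicitly leaves open (see the sentence following Conjecture~B and its precise form, Conjecture~\ref{conj:main}); the paper offers only computer verification via the appendix. There is thus no paper proof to compare against. Your torus-twist reformulation --- $V_\barla(\veps)=t(\veps)\cdot V_\la$ for $t(\veps)=\bigotimes_{b=1}^{n-1}\veps^{\om_b^\vee\,(b)}$, hence $V_\barla=\lim_{\veps\to 0}t(\veps)\cdot V_\la$ --- is correct and gives a clean description of $V_\barla$ as the initial subspace of $V_\la$ for this $\bC^\times$-grading; that part is a genuine and useful observation.

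The plan then misfires on a basic point. The precise conjecture (Conjecture~\ref{conj:main}) asserts that the \emph{diagonal} extremal tensors $v_{\sigma\barla}=\bigotimes_b v_{\sigma\la^{(b)}}$, $\sigma\in W$, already generate $V_\barla$ under $\fI_1$; in your notation this is exactly the equality $M_0=V_\barla$. Your assertion that ``in general $M_0\subsetneq V_\barla$'' and that non-diagonal extremal tensors must be adjoined as further generators is the \emph{negation} of what is being conjectured. The argument you give for it --- that in the range of Theorem~A the module $V_\barla$ is a sum of $|\mathcal A_\la|$ Demazure modules, ``usually more than one'' --- does not support the claim: the admissible set here has cardinality $|W|$, and each of the $|W|$ diagonal cyclic vectors $v_{\sigma\barla}$ generates precisely one of those Demazure summands (Theorem~\ref{thm:fund}, Theorem~\ref{thm:cohconj}), so $M_0=V_\barla$ holds throughout that range. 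Your entire second step, pinning down a larger index set $I(\barla)$ of non-diagonal generators, is therefore aimed at the wrong target. What is genuinely open is your third step restricted to the $|W|$ diagonal generators: establishing $\ch\bigl(\sum_{\sigma\in W}\U(\fI_1)v_{\sigma\barla}\bigr)=\ch V_\la$ when the cyclic summands are Kostant--Kumar modules rather than Demazure modules. This is exactly the obstacle the paper isolates (Proposition~\ref{prop:KK}, Corollary~\ref{cor:KK}), and your proposal does not go beyond restating it.
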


We are not able to prove the conjecture, but we have performed numerous checks using the program 
found in Appendix \ref{sec:app}. 
We note that the Iwahori algebra acts on $V_\barla$ via a certain quotient, 
called Inonu--Wigner contraction (see \cite{PY12,F-M25}),
which is a close cousin of the
Drinfeld double of the Borel subalgebra \cite{BNvdV20,BR24,KZ07}. 
This quotient can also be realized in two other ways:
as a degeneration of the Lie algebra $\msl_n$ \cite{F11,F12,FFL11}, and as an endomorphism algebra of certain
representation of the cyclic equioriented quiver \cite{FLP23,FLP24,Kn08}. 

In order to describe the $\fI$ modules $V_\barla$ in general, we consider the 
Kostant--Kumar modules (see \cite{Kum88,Kum89,KRV24}). Let us fix a collection $\bar\Lambda$ of affine integrable weights $(\Lambda_b)_b$ and a collection $\bar w$ of affine Weyl group elements $(w_b)_b$; let 
$d_b\in  L(\Lambda_b)$ be the extremal vectors of weight $w_b(\Lambda_b)$. 
We define $K({\bar w},\bar\Lambda)\subset \bigotimes_b L(\Lambda_b)$ as $\U(\fI).\bigotimes_b d_b$. 

Let $V_\barla'\subset V_\barla$ be the Iwahori algebra subrepresentation inside the tensor product 
of $V_{\la^{(b)}}$ generated from the tensor product of extremal vectors (if the conjecture above is
true, then $V_\barla'=V_\barla$). We show that the $\fI$-module $V_\barla'$ is isomorphic to 
a direct sum of $|W|$ many affine Kostant--Kumar modules \cite{Kum88,KRV21}.

Let us close with two remarks. First, the algebraic degeneration of $V_{\sum \la^{(b)}}$ to 
$V_\barla$  has a geometric counterpart: the flag variety degenerates to a union of affine 
Kostant--Kumar Schubert varieties. We expect that Conjecture \ref{conj:intro} is related to 
the flatness of this degeneration.
Second, all the constructions above work well in the generality
of arbitrary spherical Demazure modules $D_{\la}$ instead of the irreducible $\msl_n$ modules
$V_\la$; the 
corresponding formalism can be found in section \ref{sec:Dem}. 
However, the corresponding Iwahori algebra module is, in general,not generated from the extremal 
weight vectors (see examples and non-examples in section \ref{sec:examples}).
 
The paper is organized as follows. In Section \ref{sec:prelim}, we collect main 
finite-dimensional and infinite-dimensional objects used in the paper. In Section \ref{sec:cohconj},
we formulate the Zhu theorem proving the Pappas--Rapoport coherence conjecture. 
In Section \ref{sec:globalag}, the lattice formalism for the type A affine Grassmannians and 
flag varieties is recalled.	 Section \ref{sec:construction} contains the main construction of
our paper, which produces an Iwahori algebra module starting from a collection of spherical
Demazure modules. In Section \ref{sec:fin-dim}, we consider the special case where the Demazure modules
are irreducible finite-dimensional representations of $\msl_n$ and in Section \ref{sec:Dem} 
we treat a more general case. Section \ref{sec:examples} contains several explicit examples
of our general construction. Finally, in Appendix \ref{sec:app}, we present the computer program 
used to verify special cases of Conjecture \ref{conj:intro}.

\subsection*{Acknowledgments}
EF and AK were partially supported by ISF grant 493/24.

\section{Preliminaries}\label{sec:prelim}
\subsection{Finite-dimensional algebras}
For $\fg=\msl_n$ let us fix the Cartan decomposition $\fg=\fb\oplus\fn_-$, $\fb=\fh\oplus\fn$,
where $\fh$ is the diagonal Cartan subalgebra and $\fn$ (resp., $\fn_-$) are the upper- (resp., lower-) triangular
subalgebras. Let $\Phi=\Phi_+\sqcup\Phi_-\subset \fh^*$ be the set of roots decomposed into the disjoint 
union of 
positive and negative roots. We denote by $\al_i\in\Phi_+$, $i=1,\dots,n-1$ the simple roots for $\fg$ and 
by $f_i=f_{\al_i}\in\fn_-$, $e_i=e_{\al_i}\in\fn$ the corresponding Chevalley generators. Explicitly, $f_i=E_{i+1,i}$, $e_i=E_{i,i+1}$ where 
$E_{\bullet,\bullet}$ are matrix units. For a positive root $\al$ we denote by $f_\al\in\fn_-$ the Cartan
generator of weight $-\al$ and by $e_\al\in\fn$ the Chevalley generator of weight $\al$. The highest 
root $\theta$ is equal to the sum of all simple roots $\al_i$. 
The Weyl group $W\simeq S_n$ is generated by simple reflections $s_i$, $i=1,\dots,n-1$ corresponding to
simple roots $\al_i$; we denote by $w_0$ the longest element in $W$.

Let $\om_i$, $i=1,\dots,n-1$ be the fundamental weights and let $P$ be the weight lattice containing 
the cone of integral fundamental weights $P_+=\bigoplus_i \bZ_{\ge 0} \om_i$.  
The lattice $P$ contains the root lattice $Q$ generated by the simple roots $\al_i$.
Let $(\cdot,\cdot)$ be the standard invariant Killing form on $\fh^*$. 
The simple coroots $\al_i^\vee\in\fh$ are defined by $(\al_i,\beta)=\beta(\alpha_i^\vee)$.
The simple coroots $\al_i^\vee$ generate the coroot lattice $Q^\vee$ and the fundamental
coweights $\om_i^\vee$ generate the coweight lattice $P^\vee$. 

The irreducible highest weight modules of $\msl_n$ are denoted by $V_\mu$ with $\mu\in P_+$. In each $V_\mu$
we fix a highest weight vector and denote it by $v_\mu$. The extremal weights of $V_\mu$ are the $W$-shifts
$\sigma\mu$ of $\mu$; for each $\sigma\in W$ let $v_{\sigma\mu}\in V_\mu$ be a fixed extremal weight vector
spanning its weight space. The Demazure modules $D_{\sigma\mu}\subset V_\mu$ are defined as 
$\U(\fb)v_{\sigma\mu}$. In particular, $D_{w_0\mu}=V_\mu$ and $D_\mu$ is a one-dimensional space spanned by the highest weight
vector $v_\mu$.

In what follows we work with both the special linear Lie algebra $\msl_n$ and the general linear Lie algebra $\mgl_n$. 
The Cartan (diagonal) algebra of $\gl_n$ if $n$-dimensional; the coweights (in the 
standard basis) are denoted by $\la=(\la_1,\dots,\la_n)$, in particular, the 
compositions $\la$ form the lattice of integral coweights. 
Let $\bX\simeq \bZ^n$ be coweight lattice; we denote by $\om_i^\vee\in\bX$ the elements given by
$(1,\dots,1,0,\dots,0)$ (with $i$ units).
Hence every fundamental coweight of $\msl_n$ can be seen inside $\bX$ and we 
get an embedding of the $\msl_n$ coweight lattice $P^\vee$  into $\bX$. In particular,
the image of the dominant cone $P^\vee_+$ is identified with the set of integral dominant coweights $\la$ ($\la_i\ge \la_{i-1}$ for all $i$) such that $\la_n=0$.

Finally, let us note that the irreducible highest weight $\mgl_n$ modules $V_\mu$ are labeled by dominant compositions $\mu$; we note that the overall shift of all components of $\mu$ does not change $V_\mu$ as a module over the embedded $\msl_n\subset\mgl_n$.

\begin{rem}
Since we are working only in type $A$, weights and coweights (roots and coroots) 
are naturally identified; in what follows we sometimes use the same symbols to denote
the elements of $\fh$ and the elements of $\fh^*$.	
\end{rem}   

\subsection{Infinite-dimensional algebras}
For a Lie algebra $\fg$ the corresponding current algebra $\fg[z]$ is defined as $\fg[z]=\fg\T\bC[z]$. It contains a finite-dimensional subalgebra 
$\fg\simeq \fg\T z^0$ and an infinite-dimensional Iwahori subalgebra $\fI=\fb[z]\oplus z\fn_-[z]$. 
The Iwahori algebra is generated by the elements $f_i=f_i\T 1$, $i=1,\dots,n-1$, by the element $f_0=e_\theta\T z$ and by the Cartan subalgebra.

\begin{rem}
Let $\bK=\bC((z))$ and $\bO=\bC[[z]]$ be the field of formal Laurent series and the 
ring of formal power series. The current algebra $\fg[z]$ admits a completion 
$\fg[[z]]=\fg(\bO)$. The representations of the current algebra we consider in this paper
are graded and finite-dimensional, i.e. there exists a number $N$ such that 
$\fg\T z^N\bC[z]$ acts trivially. Therefore, all the $\fg[z]$ modules we consider 
are also the $\fg(\bO)$ modules and we freely switch between the two.  
\end{rem}

The Iwahori subalgebra plays a role of a Borel subalgebra inside the affine Kac--Moody Lie algebra 
$\gh=\fg\T \bC[z,z^{-1}]\oplus\bC K\oplus\bC d$, where $K$ is central and $d$ is the degree operator.
The affine Cartan subalgebra $\fh^a\subset \gh$ is spanned by $\fh\T 1$ and the elements $K$, $d$.  
The dual Cartan subalgebra $(\fh^a)^*$ is spanned by the roots $\al_i$, $i=1,\dots,n-1$ 
(which vanish at $K$ and $d$) and the elements $\Lambda_0$, $\delta$ such that 
$\Lambda_0(\al_i)=\delta(\al_i)=0$ for $1\le i<n$, $\Lambda_0(K)=1=\delta(d)$, $\Lambda_0(d)=\delta(K)=0$.
The affine simple root $\al_0$ is equal to $\delta-\theta$.

An affine dominant integral weight $\Lambda\in(\fh^a)^*$ is defined by its restriction to the 
finite part
$\Lambda|_\fh$ and by the level $\Lambda(K)$ (we will always assume that $\Lambda(d)=0$). The level
is a non-negative integer and one has the restriction $(\Lambda|_\fh,\theta^\vee)\le \Lambda(K)$.
For $\fg=\msl_n$ there are $n$ level one integral dominant weights: the possible finite parts
are fundamental weights or zero. We denote the lattice of affine dominant integral weights of $\widehat{\msl}_n$ by $P^a_+$
and the representation corresponding to $\Lambda\in P^a_+$ by $L(\Lambda)$. The space $L(\Lambda)$ 
contains a highest weight vector $v_\Lambda$ such that $L(\Lambda)$ is generated from $v_\Lambda$
by the action of $\fb_-[z^{-1}]\oplus z^{-1}\fn[z^{-1}]$. 

The affine type $A^{(1)}_{n-1}$ Dynkin diagram (corresponding to the Lie algebra $\widehat\msl_{n}$)  is the cyclic
graph $\Delta$ with $n$ vertices. We denote the set of vertices by $\Delta_0$ and identify 
$\Delta_0$ with the set of 
numbers $b\in\bZ/n\bZ$; we also assume that the graph $\Delta$ is equi-oriented. 
Let $W^a$ be the affine Weyl group. The group $W^a$ is generated by the simple reflections $s_b$,
$b\in\Delta_0$. It can be explicitly written as $W^a=W\ltimes Q^\vee$, where $Q^\vee$ is the coroot
lattice. 
For $\la\in Q^\vee$ we denote the corresponding
element of $W^a$ by $z^\la$. 

For $b\in \Delta_0$ let $\Lambda_b\in(\fh^a)^*$  be the affine level one fundamental
weights: the finite part of $\Lambda_0$ is zero, $\Lambda_b|_\fh=\omega_b$ for $b\ne 0$.
The weight $\Lambda_0$ is called basic (or vacuum) level one weight.
Any integral dominant weight $\Lambda$ can be written as 
$\sum_{b\in\Delta_0} m_b\Lambda_b$ with some non-negative integer coefficients $m_b$.
        
For an element $\sigma\in W^a$ let $v_{\sigma\Lambda}\in L(\Lambda)$ be a fixed extremal 
vector of weight  $\sigma\Lambda$. The affine Demazure module 
$D_\sigma(\Lambda)\subset L(\Lambda)$ is equal to 
$\U(\fI)v_{\sigma\Lambda}$, i.e. is generated from the extremal weight vector by the 
action of the Iwahori algebra. 

We also need the $\msl_n$ and $\mgl_n$ extended affine Weyl groups. 
Recall that the type $A_{n-1}^{(1)}$
extended affine Weyl group is the semi-direct product of the finite Weyl group $W$ and the coweight lattice $P^\vee$. The $\mgl_n$ version is the semi-direct product $W_{\rm ext}=S_n\ltimes \bX$.

\section{The coherence conjecture}\label{sec:cohconj}
Recall the notation $\bK=\bC((z))$ and $\bO=\bC[[z]]$ for the field of formal  Laurent 
series and its subring 
of formal power series.  For a simple Lie algebra $\fg$ let  
$G$ be the corresponding simply-connected Lie group; in this paper we only consider the case $\fg=\msl_n$, $G=SL_n$. 
For $b\in\Delta_0=\bZ/n\bZ$ let $\aGr_b$ be the corresponding affine Grassmannian, which is the quotient 
of $G(\bK)$ by the $b$-th maximal parabolic subgroup. In particular, 
$\aGr_0$ is the quotient of $G(\bK)$ by the current group $G(\bO)=G[[z]]$. 
All the affine Grassmannians are ind-varieties, i.e. inductive limits of finite-dimensional 
subvarieties. 
One has a $\widehat G$-equivariant embedding 
$\aGr_b\subset\bP(\Lambda_b)$; we denote by $\eO$ the pull-back from $\bP(\Lambda_b)$ of 
the line bundle $\eO(1)$. The line bundle $\eO$ is known to be a generator of the Picard group of the 
affine Grassmannian $\aGr_b$.

The Schubert varieties inside affine Grassmannians $\aGr_b$ are defined as the closures of the
Iwahori group orbits through the torus fixed points. We will only be interested in the spherical 
Schubert varieties in the affine Grassmannians, i.e. whose Schubert varieties which are invariant 
with respect to the action of the current group $SL_n(\bO)$.
The spherical Schubert varieties are labeled by dominant coweights $\la$;
the corresponding Schubert variety is denoted by $X_\la$
(since we are working in type $A$, we  sometimes identify coweight lattice with the weight lattice).
For a fixed $\la$ the variety $X_\la$ sits inside $\aGr_b$ such that $\la-\om_b^\vee$ is in the coroot lattice (if $\la$ itself belongs to the coroot lattice, then $b=0$).

\begin{rem}
Sometimes it is more convenient to work with all the affine Grassmannians together. To this end, one starts with an adjoint group $G_{\rm ad}$ 
and consider the quotient $G_{\rm ad}(\bK)/G_{\rm ad}(\bO)$. This quotient
is identified with the disjoint union over all $b$ of the affine Grassmannians
$\aGr_b$. Then the line bundles $\eO$ can be glued into a single line bundle on   $G_{\rm ad}(\bK)/G_{\rm ad}(\bO)$.
\end{rem} 

Let $\aFl=G(\bK)/\bI$ be the affine flag variety, where $\bI$ is the Iwahori subgroup --
the preimage of the Borel subgroup of $G$ with respect to the $z=0$ evaluation map. 
As in the case of affine Grassmannians, 
$\aFl$ is an ind-variety as an inductive limit of finite-dimensional Schubert subvarieties (for more details see below). 
Using the projections  $\aFl\to \aGr_b$, one
constructs line bundles $\eL(\Lambda_b)$ on $\aFl$ as pull backs of $\eO$. The line bundles 
$\eL(\Lambda_b)$, $b\in\Delta_0$ generate the Picard group of the affine flag variety. For
$\Lambda=\sum_b m_b\Lambda_b$ we denote by $\eL(\Lambda)$ the line bundle 
$\bigotimes_{b\in\Delta_0} \eL(\Lambda_b)^{\T m_b}$. One can also define partial affine
flag varieties, interpolating between $\aFl$ and $\aGr_b$, but in this paper we need only the complete 
affine flags.

The affine flag variety contains the set of torus fixed points $p_\sigma$ for $\sigma\in W^a$.
The Iwahori group orbits $\bI.p_\sigma$ are called Schubert cells and the closures are called Schubert
varieties. We denote $\overline{\bI.p_\sigma}$ by $Y_\sigma\subset \aFl$. 
Each $Y_\sigma$ is a finite-dimensional  
(in general, singular) projective algebraic variety; the union of all Schubert varieties coincides with $\aFl$. 
One has the following important property of the Schubert varieties: the space of sections $H^0(Y_\sigma,\eL(\Lambda))$ is isomorphic (as the Iwahori algebra module) to the dual of an affine Demazure module.

\begin{rem}\label{rem:nonconfl}
Similar to the situation with affine Grassmannian, one can consider a non-connected version of the affine flags (with the connected components labeled by the quotient of the coweight lattice by the coroot lattice). 
The Iwahori algebra orbits are then parametrized by the elements of the extended affine Weyl group (as opposed to the smaller affine Weyl group as above).   
\end{rem}

In order to formulate the Zhu theorem (Pappas--Rapoport conjecture) we need one more piece of notation.
For a coweight $\la$ let $z^\la$ be the corresponding extended affine Weyl group element. Let  
\begin{equation}\label{eq:Ala}
\mathcal A(\la) = \bigcup_{w\in W} Y_{z^{w\la}}\subset \aFl,
\end{equation}
i.e. $\mathcal A(\la)$ is a union of (closed) Schubert varieties inside the affine flag variety.

\begin{rem}
In \eqref{eq:Ala} we consider arbitrary coweights $\la$, hence $z^\la$ does not necessarily belongs to the affine Weyl group, but to the extended affine Weyl group. Therefore the use of the non-connected affine flags 
is necessary in this formulation (see Remark \ref{rem:nonconfl}).
However, one can stay with the standard connected affine flag variety 
by replacing the right hand side of \eqref{eq:Ala} by the union of Schubert
varieties corresponding to the elements $\sigma\in W^a$ such that $\sigma$
is smaller than some element of the form $z^{w\la}$, $w\in W$. 
We do not go into details here, since in type $A$ the whole picture can (and will) be made very explicit using the $GL_n$ lattice formalism.  
\end{rem}

Let $\Lambda=\sum_{b\in\Delta_0} m_b \Lambda_b$ be an affine dominant integral weight.
In particular, the level $\Lambda(K)$ of $\Lambda$ is equal to the sum of all $m_b$. Recall the
line bundles $\eL(\Lambda)$ on $\aFl$ and the line bundle $\eO$ on the affine Grassmannians. 
The coherence conjecture states that 
\begin{equation}
\dim H^0(X_\la,\eO^{\T \Lambda(K)}) = \dim H^0(\mathcal A(\la),\eL(\Lambda)).
\end{equation}       
The equality of dimensions was upgraded to the isomorphism of modules over the Cartan subalgebra by Hong
and Yu (see \cite{HY24}).

\section{Global affine Grassmannians in type A}\label{sec:globalag}
\subsection{Lattices}
The major role in the proof of the coherence conjecture is played by the global affine Grassmannians,
which is a family connecting affine Grassmannians and affine flag varieties 
(see \cite{Zhu14,HY24,HaN02}).
In type $A$ one can make the construction explicit using the lattice formalism (see e.g. \cite{AB24,AR,Zhou19}).
Let us recall the setup.

Let $w_1,\dots,w_n$ be a standard basis of the $n$-dimensional vector space. 
A lattice $L$ is a subspace in $\bK^n=\mathrm{span}\{w_i\}_{i=1}^n\T \bK$ which is a free $\bO$ module 
of rank $n$. For example, for a coweight
$\la=(\la_1,\dots,\la_n)\in\bZ^n$ we denote by $\bfL^\la$ the lattice generated by $z^{-\la_i}w_i$, 
$1\le i\le n$. The charge $\nu(\bfL^\la)$ is equal to the sum of all $\la_i$: 
$\nu(\bfL^\la)=|\la|$. 
For example, for $\om_b=(1,\dots,1,0,\dots,0)$ (with $b$ units) 
the lattice $\bfL^{\om_b}$ is equal to 
$\bO^n\oplus z^{-1}\mathrm{span}\{w_1,\dots,w_b\}$ and $\nu(\bfL^{\om_b})=b$.
For a general lattice
$L$ with an $\bO$ basis $u_1,\dots,u_n$ its charge $\nu(L)$ is defined as the negated smallest
$z$-degree showing up in  the determinant of the matrix whose columns are the $n$-vectors $u_i$ (with coefficients in $\bK$). The group $SL_n(\bO)$ acts on the space of lattices and preserves the charge.

The affine Grassmannian $\aGr_b$ is realized as the space of lattices $L$ such that $\nu(L)=b$;
in particular, $\aGr_b\ni \bfL^{\om_b}$ (including the case $\om_0=0$).
The affine Grassmannian $\aGr_b$ is isomorphic
to the quotient of $SL_n(\bK)$ by the $b$-th maximal parabolic subgroup. In general, 
all lattices realize the $GL_n$ affine Grassmannian $GL_n(\bK)/GL_n(\bO)$. We note that the multiplication
by $z^{-1}$ adds $n$ to the charge of a lattice, so in principal one can consider affine Grassmannians $\aGr_b$
for any integer $b$, consisting of lattices of charge $b$, but $\aGr_b$ is naturally identified with
$\aGr_{b+n}$. 

The affine flag variety $\aFl$ sits inside the product of affine Grassmannians: 
$\aFl\subset \prod_{b\in\Delta_0} \aGr_b$.
Explicitly, $\aFl$ consists of collections $(L_b)_{b}$ such that 
\[
L_0\subset L_1\subset\dots\subset L_{n-1}\subset z^{-1}L_0,\ \dim L_{i+1}/L_i=1.
\]
In particular,  the base point of $\aFl$ is represented by the chain $(\bfL^{\om_b})_{b=0}^{n-1}$. 
For $w=(\sigma,\la)\in W_{\rm ext}$, $\sigma\in S_n$, $\la\in\bX$ we define the point $\bfL^w\in\aFl$ by 
\begin{equation}\label{eq:Lw}
\bfL^w=\left(\bfL^{\sigma(\la)}\subset \bfL^{\sigma(\la+\om_1)}\subset \dots\subset  
\bfL^{\sigma(\la+\om_{n-1})}
\subset z^{-1}\bfL^{\sigma(\la)}\right).
\end{equation}
As mentioned above, the spherical Schubert varieties in the affine Grassmannian are labeled by 
dominant coweights $\la$; the variety $X_\la$  is the closure of the orbit $SL_n(\bO).\bfL^\la$.
The spherical Schubert varieties sitting inside $\aGr_b$ correspond to $\la$ with $|\la|=b$. 
The Schubert subvarieties inside the affine flag variety $\aFl$ are the closures of
the Iwahori group orbits of the points $\bfL^w$, $w\in W_{\rm ext}$.

\subsection{Global affine Grassmannian}\label{subsec:globalaffine}
Let us describe a family over $\bA^1$ whose general fiber is an affine Grassmannian
$\aGr_b$ 
and the special fiber is isomorphic to the affine flag variety. For simplicity,
the family below corresponds to the case $b=0$, the general case does not differ much.

We introduce the $\bK$-linear maps $\psi_b: \bK^n\to \bK^n$, $b\in\bZ/n\bZ$ by 
\begin{equation}\label{eq:psib}
\psi_b(w_i)=\begin{cases} w_i, & i\ne b+1,\\ (z+\veps)w_i, & i=b+1\end{cases}, 
\end{equation}
where $\veps$ is a complex number.
The global Grassmannian $\bf{Gr}$ is formed by collections $(\veps,L_0,\dots,L_{n-1})$ such that
$\veps\in\bC$, $L_i\in\aGr_0$ and $\psi_{b} L_i\subset L_{i+1}$ for all $i=0,\dots,n-1$
(the condition for $i=n-1$ reads as $\psi_{n-1} L_{n-1}\subset L_0$).
We note that the composition of all the maps $\psi_{b}$ is equal to $(z+\veps)\mathrm{Id}$, 
the condition 
$(z+\veps)L_0\subset L_0$ does hold, since $L_0$ is a lattice. 
The general fiber of the natural map $\pi:\bf{Gr}\to\bC$ (projection to the first coordinate $\veps$)
is isomorphic to the affine Grassmannian $\aGr_0$, since $z+\veps$ is invertible
in $\bO$ for $\veps\ne 0$. and the special fiber (over $\veps=0$)
is isomorphic to the affine flag variety ${\aFl}$; the isomorphism is given by 
$(L_i)_i\mapsto (A^iL_i)$, where $A$ is the natural shift operator which 
identifies $\aGr_\bullet$ with $\aGr_{\bullet+1}$.

The spherical Schubert varieties $X_\la$ admit a global version $\bf{X}_\la\subset \bf{Gr}$ \cite{AB24}. The general
fiber of the restriction of the projection map $\pi$ to $\bf{X}_\la$ is isomorphic to the Schubert variety $X_\la$ and the special fiber 
$\pi^{-1}(0)\cap \bf{X}_\la\subset {\mathcal Fl}$ is equal to a union of Schubert varieties 
$\mathcal A_\la$ \eqref{eq:Ala}.
One can describe the components explicitly using the Kottwitz-Rapoport alcoves \cite{KR00}. 

\section{The construction}\label{sec:construction}
In this section we formulate the general form of the algebraic version of the
Pappas--Rapoport construction. In the following sections
we discuss the details and describe certain special cases (in particular, related to the coherence conjecture).

\begin{lem}\label{lem:globK^n}
Let us consider the representation $M$ of the cyclic equioriented quiver $\Delta$ such 
that all the spaces $M_b$ are isomorphic to $\bK^n$ and the
map from the vertex $b$ to $b+1$ is $\psi_b$ \eqref{eq:psib}. Then the endomorphism algebra $\mathrm{End}_\Delta(M)$ consists of collections of
$\bK$-linear maps $(A_b)_{b\in \Delta_0}:\bK^n\to\bK^n$ such that 
\[
(A_{b+1})_{i,j} = \begin{cases} 
	(z+\veps)(A_b)_{i,j}, & i=b+1, j\ne b+1\\ (z+\veps)^{-1}(A_b)_{i,j}, & j=b+1, i\ne b+1\\
	(A_b)_{i,j}, & \text{ otherwise} 
	\end{cases}
\]     	
\end{lem}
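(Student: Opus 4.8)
The plan is to unwind the definition of an endomorphism of a quiver representation and exploit that each structure map is invertible over the field $\bK$. First I would recall that an element of $\mathrm{End}_\Delta(M)$ is, by definition, a tuple $(A_b)_{b\in\Delta_0}$ of $\bK$-linear endomorphisms $A_b\colon\bK^n\to\bK^n$ making every arrow square of $\Delta$ commute, i.e. $\psi_b A_b = A_{b+1}\psi_b$ for all $b\in\bZ/n\bZ$. In the standard basis $w_1,\dots,w_n$ the map $\psi_b$ is the diagonal matrix with entry $z+\veps$ in position $b+1$ and $1$ elsewhere; since $z+\veps$ is a nonzero element of $\bK$, this matrix is invertible over $\bK$, its inverse being diagonal with $(z+\veps)^{-1}$ in position $b+1$. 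Hence each commutation relation is equivalent to $A_{b+1}=\psi_b A_b\psi_b^{-1}$.

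The second step is a direct entrywise computation of this conjugation by a diagonal matrix. Writing $(\psi_b)_{k,k}=d_k$ with $d_{b+1}=z+\veps$ and $d_k=1$ otherwise, one has $(A_{b+1})_{i,j}=d_i\,(A_b)_{i,j}\,d_j^{-1}$; running through the cases $i=b+1$ versus $i\ne b+1$ and $j=b+1$ versus $j\ne b+1$ reproduces exactly the three alternatives in the statement, the case $i=j=b+1$ giving $d_id_j^{-1}=1$ and so being absorbed into the "otherwise" case. Since $\psi_b^{\pm1}$ has entries in $\bK$, the map produced at each step is again a genuine $\bK$-linear endomorphism of $\bK^n$, so the recursion is well posed.

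The last point to address — and the only place where something could in principle fail — is the cyclic closure. Iterating the recursion gives $A_b=(\psi_{b-1}\cdots\psi_0)\,A_0\,(\psi_{b-1}\cdots\psi_0)^{-1}$, and after one full loop $\psi_{n-1}\cdots\psi_0=(z+\veps)\,\Id$ is a scalar (as already observed in the construction of the global Grassmannian), so $A_n=A_0$ automatically, with no extra constraint. Therefore $A_0$ may be chosen freely in $\mathrm{End}_{\bK}(\bK^n)\cong M_n(\bK)$, the remaining $A_1,\dots,A_{n-1}$ are then forced by the displayed formula, and conversely any tuple satisfying the formula arises this way; this identifies $\mathrm{End}_\Delta(M)$ with the stated set. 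I do not expect any genuine obstacle here: the argument is just the observation that conjugation by a diagonal matrix rescales off-diagonal entries, together with the scalar-composite fact that makes the cycle consistent.
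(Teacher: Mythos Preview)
Your proposal is correct and is precisely the direct computation the paper has in mind; the paper's own proof consists of the two words ``Direct computation.'' Your write-up simply spells this out: invert the diagonal map $\psi_b$ over $\bK$, conjugate, and read off entries, with the scalar composite $(z+\veps)\Id$ ensuring cyclic consistency.
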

\begin{proof}
Direct computation.
\end{proof}

Here is an example for $n=4$:
\[
A_0= \left(\begin{smallmatrix}
a_{11} & a_{12} & a_{13} & a_{14}\\
a_{21} & a_{22} & a_{23} & a_{24}\\
a_{31} & a_{32} & a_{33} & a_{34}\\
a_{41} & a_{42} & a_{43} & a_{44}
\end{smallmatrix}\right),\
A_1= \left(\begin{smallmatrix}
a_{11} & a_{12}(z+\veps) & a_{13}(z+\veps) & a_{14}(z+\veps)\\
\frac{a_{21}}{z+\veps} & a_{22} & a_{23} & a_{24}\\
\frac{a_{31}}{z+\veps} & a_{32} & a_{33} & a_{34}\\
\frac{a_{41}}{z+\veps} & a_{42} & a_{43} & a_{44}
\end{smallmatrix}\right),
\]
\[
A_2= \left(\begin{smallmatrix}
	a_{11} & a_{12} & a_{13}(z+\veps) & a_{14}(z+\veps)\\
	a_{21} & a_{22} & a_{23}(z+\veps) & a_{24}(z+\veps)\\
	\frac{a_{31}}{z+\veps} & \frac{a_{32}}{z+\veps} & a_{33} & a_{34}\\
	\frac{a_{41}}{z+\veps} & \frac{a_{42}}{z+\veps} & a_{43} & a_{44}
\end{smallmatrix}\right),\
A_3= \left(\begin{smallmatrix}
	a_{11} & a_{12} & a_{13} & a_{14}(z+\veps)\\
	a_{21} & a_{22} & a_{23} & a_{24}(z+\veps)\\
	a_{31} & a_{32} & a_{33} & a_{34}(z+\veps)\\
	\frac{a_{41}}{z+\veps} & \frac{a_{42}}{z+\veps} & \frac{a_{43}}{z+\veps} & a_{44}
\end{smallmatrix}\right).
\]

By definition, the endomorphism algebra is a subalgebra of $\bigoplus_{b\in\Delta_0}\mgl_n(\bK)$.
Its $z$-non-negative part -- the intersection  with the $\bigoplus_{b\in\Delta_0}\mgl_n(\bO)$ -- 
is a free $\bO$ module 
generated by the diagonal part
\begin{equation}
\label{eq:diag}	
(E_{i,i},\dots,E_{i,i}),\ 1\le i\le n,
\end{equation}
the upper-triangular part
\begin{equation}
\label{eq:uppertr}
(\underbrace{E_{i,j},\dots,E_{i,j}}_i, 
\underbrace{(z+\veps)E_{i,j},\dots,(z+\veps)E_{i,j}}_{j-i},
\underbrace{E_{i,j},\dots,E_{i,j}}_{n-j}) 
\end{equation}
for all $1\le i <j \le n$, and the lower-triangular part
\begin{equation}
\label{eq:lowertr}
(\underbrace{(z+\veps)E_{j,i},\dots,(z+\veps)E_{j,i}}_i, 
\underbrace{E_{j,i},\dots,E_{j,i}}_{j-i},
\underbrace{(z+\veps)E_{j,i},\dots,(z+\veps)E_{j,i}}_{n-j}) 
\end{equation}
for all $1\le i <j \le n$.
We denote the Lie algebra generated by the elements above by $\fa(\veps)$.

\begin{lem}\label{lem:aIw}
The Lie algebra $\fa(0)$ is isomorphic to the Iwahori algebra $\fI$. For $\veps\ne 0$ the Lie algebra $\fa(\veps)$ is isomorphic to $\mgl_n(\bO)$.
\end{lem}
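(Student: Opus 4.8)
The plan is to reduce both isomorphisms to a computation inside a single copy of $\mgl_n(\bO)$ by projecting onto the $0$-th factor. Let $p_0\colon \mathrm{End}_\Delta(M)\to\mgl_n(\bK)$, $(A_b)_{b}\mapsto A_0$. First I would observe that $p_0$ is injective: by the previous lemma the recursion recovers $A_{b+1}$ from $A_b$, so an endomorphism is determined by $A_0$, and this is consistent because running the recursion once around the cyclic quiver multiplies the $(i,j)$-entry by $(z+\veps)^{c_{ij}}$ with $c_{ij}=\#\{b\in\Delta_0: i=b+1\}-\#\{b\in\Delta_0: j=b+1\}=0$. Hence $\fa(\veps)$ is identified, as a Lie algebra, with the $p_0$-image of the $z$-nonnegative part of $\mathrm{End}_\Delta(M)$, that is, with the $\bO$-span of the elements $E_{i,i}$ ($1\le i\le n$), $E_{i,j}$ ($i<j$) and $(z+\veps)E_{j,i}$ ($i<j$), these being the images under $p_0$ of the generators \eqref{eq:diag}, \eqref{eq:uppertr}, \eqref{eq:lowertr}.

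The case $\veps\ne 0$ would then be immediate. Since $z+\veps$ is a unit in $\bO$, the elements $(z+\veps)E_{j,i}$ and $E_{j,i}$ have the same $\bO$-span, so the displayed generators $\bO$-span all of $\mgl_n(\bO)$; conversely, given any $A_0\in\mgl_n(\bO)$ the recursion reconstructs the remaining $A_b$ and keeps them in $\mgl_n(\bO)$ because the coefficients $(z+\veps)^{\pm1}$ are units. Thus $p_0$ restricts to an isomorphism of the $z$-nonnegative part of $\mathrm{End}_\Delta(M)$ onto $\mgl_n(\bO)$, and $\fa(\veps)\cong\mgl_n(\bO)$. (Equivalently: for $\veps\ne0$ every $\psi_b$ is an automorphism of the lattice $\bO^n$, so the lattice version of $M$ is the constant quiver representation and its endomorphism algebra is simply $\mgl_n(\bO)$.)

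For $\veps=0$ the coefficient $z^{-1}$ is no longer in $\bO$, and tracing the recursion shows that a tuple $(A_b)_b$ lies in $\bigoplus_b\mgl_n(\bO)$ exactly when $(A_0)_{j,i}\in z\bO$ for all $j>i$, with no constraint on the remaining entries. Hence $p_0$ identifies $\fa(0)$ with $\{A\in\mgl_n(\bO): A_{j,i}\in z\bO\ \text{for}\ j>i\}$, which is precisely $\fb[z]\oplus z\fn_-[z]=\fI$. The main point requiring care — and the step I would spend the most effort on — is the bookkeeping in this final identification: one must check that the listed elements really generate the whole $\bO$-module (so that $\fa(0)$ is all of $\fI$ and not a proper subalgebra) and keep the $\mgl_n$ versus $\msl_n$ normalization consistent throughout, since the diagonal generators \eqref{eq:diag} produce the full $\mgl_n$ Cartan, including the central element $\mathrm{Id}$. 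Once that is pinned down, the isomorphism is immediate from the description of the $z$-nonnegative part of $\mathrm{End}_\Delta(M)$ recalled above.
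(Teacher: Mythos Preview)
Your proof is correct and follows essentially the same route as the paper's: both identify $\fa(\veps)$ with the $\bO$-span of the displayed generators and observe that the upper-triangular and diagonal pieces always give $\fb(\bO)$, while the lower-triangular generators contribute $z\fn_-(\bO)$ for $\veps=0$ and all of $\fn_-(\bO)$ for $\veps\ne0$ (since $z+\veps\in\bO^\times$). The only difference is packaging: you make the isomorphism explicit via the injective projection $p_0$ onto the $0$-th factor, whereas the paper works directly with the generators inside the direct sum and leaves the identification implicit. Your remark about the $\mgl_n$ versus $\msl_n$ Cartan is well taken---the generators \eqref{eq:diag} do produce the full $\mgl_n$ diagonal, so $\fa(0)$ is literally the $\mgl_n$-Iwahori; the paper treats this as a harmless normalization (cf.\ the remark after Lemma~\ref{lem:juggling}).
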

\begin{proof}
We first note that for any $\veps$ the $\bO$-span of the elements \eqref{eq:uppertr} and \eqref{eq:diag} is the Lie algebra $\fb(\bO)$ of upper-triangular matrices in $\mgl_n(\bO)$.
Now for $\veps=0$ the elements \eqref{eq:lowertr} add $z\fn_-(\bO)$ .
For $\veps\ne 0$, since $z+\veps$ is invertible in $\bO$,  \eqref{eq:lowertr} divided by 
$z+\veps$ together with \eqref{eq:diag} and \eqref{eq:uppertr} form a $\bO$ basis for 
$\mgl_n(\bO)$.
\end{proof}

Let $D$ be a cyclic graded finite-dimensional $\msl_n(\bO)$-module with cyclic vector $v$ 
(one can replace $\msl_n(\bO)$ with $\msl_n[z]$). We assume that 
\begin{itemize}
	\item $v$ is a weight vectors, $hv=\la(h)v$, $h\in\fh$ for some weight $\la\in \fh^*$,
	\item $\fn_-[z] v=0$ and $z\fh[z]v=0$,
	\item $\U(\fn[z])v=D$.  
\end{itemize}
In particular, $D$ is cyclic as the Iwahori algebra module.
Recall the Chevalley generators $e_b\in\fn$, $e_b=E_{b,b+1}$.
For a complex number $\veps\ne 0$ and $b=1,\dots,n-1$ let $\mathrm{sh}_b:\fn(\bO)\to \fn(\bO)$ 
be a Lie algebra endomorphism defined by 
\begin{equation}\label{eq:sheb}
e_bz^k \mapsto e_bz^k(z+\veps),\qquad e_{a}z^k \mapsto e_az^k, a\ne b.
\end{equation}
Explicitly, for $1\le i<j\le n$ and $k\ge 0$ one has:
\begin{equation}\label{eq:sh}
\mathrm{sh}_b (E_{i,j}z^k)=\begin{cases}
\veps E_{i,j}z^k+ E_{i,j}z^{k+1}, & i\le b<j,\\
E_{i,j}z^k, & \text{ otherwise}.
\end{cases}
\end{equation}
One easily checks that $\mathrm{sh}_b$ is indeed compatible with the Lie bracket.

\begin{dfn}\label{def:badm}
For $b=1,\dots,n-1$ we say that $D$ is $b$-admissible if for any $\veps\ne 0$ 
there exists a linear isomorphism $f:D\to D$, $f(v)=v$ such that 
for any $x\in\fn(\bO)$ one has $f\circ x = \mathrm{sh}_b(x)\circ f$.  
\end{dfn}

In other words,  $D$ is isomorphic to the $\fn(\bO)$
module obtained from $D$ by shifting the action using the endomorphism $\mathrm{sh}_b$. 

\begin{rem}
We show below that all affine Demazure modules are $b$-admis\-sible for all $b$.	
\end{rem}

\begin{rem}
The image of the embedding $\fa(\veps)\subset\bigoplus_{b\in\Delta_0} \mgl_n(\bO)$ 
contains the upper-triangular subalgebra consisting of elements of the form 
\[
(x,\mathrm{sh}_1(x),\dots, \mathrm{sh}_{n-1}(x)), \ x\in\fn(\bO).
\]
\end{rem}

Now let $D_b$, $b\in\Delta_0=\bZ/n\bZ$ be a collection of $b$-admissible $\msl_n(\bO)$ modules with cyclic
vectors $v_b$ (for $b=0$ the $b$-admissibility condition is empty).
We consider the Cartan component in the tensor product of all modules $D_b$, namely
\[
\odot_{b\in\Delta_0} D_b = \U(\msl_n(\bO)).\otimes_b v_b \subset \bigotimes_{b\in\Delta_0} D_b.
\]
The tensor product $\bigotimes_{b} D_b$ is acted upon by the algebra $\fa(\veps)$ via the embedding
$\fa(\veps)\to \bigoplus_b \mgl_n(\bO)$. For $\veps\ne 0$ we define
\begin{equation}\label{eq:Dveps}
\bfD(\veps) = \U(\fa(\veps)).\otimes_{b\in\Delta_0} v_b\subset \bigotimes_{b\in\Delta_0} D_b.
\end{equation}

\begin{lem}
Assume that $D_b$ is $b$-admissible for all $b\in\Delta_0$. Then  	
for any $\veps\ne 0$ the $\fn(\bO)$ module $\bfD(\veps)$ is isomorphic to $\odot_{b} D_b$. 
\end{lem}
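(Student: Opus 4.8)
The plan is to produce an explicit linear automorphism $F$ of the ambient space $\bigotimes_{b\in\Delta_0}D_b$, assembled from the $b$-admissibility isomorphisms, which intertwines the standard diagonal $\fn(\bO)$-action with the $\fa(\veps)$-action and fixes the vector $\otimes_b v_b$; restricting $F$ to the two cyclic submodules then gives the desired isomorphism.

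\textbf{Step 1: pin down the two $\fn(\bO)$-actions on $\bigotimes_b D_b$.} On $\odot_b D_b=\U(\msl_n(\bO)).\otimes_b v_b$ the action is restricted from $\msl_n(\bO)$; since $\otimes_b v_b$ is an $\fh$-weight vector annihilated by $\fn_-(\bO)$ and by $z\fh[z]$, PBW gives $\odot_b D_b=\U(\fn(\bO)).\otimes_b v_b$ with $x\in\fn(\bO)$ acting as the derivation $\rho(x)=\sum_{b\in\Delta_0}x^{(b)}$. For $\bfD(\veps)$ I would use the decomposition of $\fa(\veps)$ into the $\bO$-spans of \eqref{eq:uppertr}, \eqref{eq:diag} and \eqref{eq:lowertr}. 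By Lemma \ref{lem:aIw} and the invertibility of $z+\veps$ in $\bO$, under the isomorphism $\fa(\veps)\cong\mgl_n(\bO)$ these correspond to $\fn(\bO)$, $\fh(\bO)$ and $\fn_-(\bO)$, so this is a genuine triangular decomposition. The $\bO$-span of \eqref{eq:lowertr} annihilates $\otimes_b v_b$ (each of its components lies in $\fn_-\otimes\bC[z]$, which kills every $v_b$), the $\bO$-span of \eqref{eq:diag} acts on $\otimes_b v_b$ by a scalar in degree $0$ and by $0$ in positive degrees (using $z\fh[z]v_b=0$), and the $\bO$-span of \eqref{eq:uppertr} is, by the Remark preceding the Lemma, the image of the embedding $x\mapsto(x,\mathrm{sh}_1(x),\dots,\mathrm{sh}_{n-1}(x))$. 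Hence, setting $\mathrm{sh}_0=\mathrm{Id}$, PBW yields $\bfD(\veps)=\U(\fa(\veps)).\otimes_b v_b=\U(\fn(\bO)).\otimes_b v_b$ with $x$ now acting via $\rho'(x)=\sum_{b\in\Delta_0}\mathrm{sh}_b(x)^{(b)}$.

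\textbf{Step 2: build the intertwiner.} For each $b$ fix the linear isomorphism $f_b\colon D_b\to D_b$ of Definition \ref{def:badm}, so $f_b(v_b)=v_b$ and $f_b\circ x=\mathrm{sh}_b(x)\circ f_b$ for all $x\in\fn(\bO)$ (for $b=0$ take $f_0=\mathrm{Id}$, consistently with $\mathrm{sh}_0=\mathrm{Id}$). Put $F=\bigotimes_{b\in\Delta_0}f_b$. Then $F(\otimes_b v_b)=\otimes_b v_b$, and for $x\in\fn(\bO)$ the operator $F\circ\rho(x)$ acts on the $b$-th tensor slot as $f_b\circ x$ and on every other slot $b'$ as $f_{b'}$; applying $f_b\circ x=\mathrm{sh}_b(x)\circ f_b$ slot by slot and summing over $b$ gives $F\circ\rho(x)=\rho'(x)\circ F$. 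Thus $F$ is a bijective $\fn(\bO)$-module map from $\bigl(\bigotimes_b D_b,\rho\bigr)$ to $\bigl(\bigotimes_b D_b,\rho'\bigr)$ sending the cyclic vector to itself.

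\textbf{Step 3: conclude.} A bijective module homomorphism carries the submodule generated by a vector onto the submodule generated by its image, so $F$ restricts to an isomorphism of $\fn(\bO)$-modules $\odot_b D_b\xrightarrow{\sim}\bfD(\veps)$ — the former being the $\rho$-submodule generated by $\otimes_b v_b$, the latter the $\rho'$-submodule generated by $\otimes_b v_b$. I expect the only delicate point to be Step 1: verifying that the $\bO$-spans of \eqref{eq:uppertr}, \eqref{eq:diag}, \eqref{eq:lowertr} really constitute a triangular decomposition of $\fa(\veps)$ for which $\otimes_b v_b$ is a lowest-weight vector, so that the two PBW reductions are legitimate. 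Once that is in place, the admissibility isomorphisms do all the work and the remainder of the argument is formal.
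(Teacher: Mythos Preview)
Your proof is correct and follows the same approach as the paper: the paper's proof is a terse two-line argument that simply declares the desired isomorphism to be $\bigotimes_{b\in\Delta_0} f_b$ with $f_0=\mathrm{Id}$, leaving the PBW reduction and the verification of the intertwining property implicit. Your Steps~1--3 spell out exactly these details, so your version is a more careful elaboration of the same idea rather than a different route.
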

\begin{proof}
Let $f_b:D_b\to D_b$, $f_bv_b=v_b$ be the isomorphism satisfying 
$f_b\circ x = \mathrm{sh}_b(x)\circ f_b$ for any
$x\in\fn(\bO)$.
Then the desired isomorphism is induced by the map $\bigotimes_{b\in\Delta_0} f_b$ with
$f_0=\mathrm{Id}$.
\end{proof}

We define the subspace \[\bfD(0)=\lim_{\veps\to 0} \bfD(\veps)\subset\bigotimes_{b\in\Delta_0} D_b,\]
which is a degeneration of the Cartan component $\odot_{b} D_b$.
Explicitly, the procedure works as follows. Let us consider $\veps$ as an auxiliary variable 
(parameter). Then $\bfD(\veps)$ sits inside $\left(\bigotimes_{b\in\Delta_0} D_b\right)[\veps]$ \eqref{eq:Dveps}. Let us consider the decreasing filtration of the ambient space by the 
subspaces  $\veps^r\left(\bigotimes_{b} D_b\right)[\veps]$, $r\ge 0$.
One gets an induced decreasing filtration on $\bfD(\veps)$ (the intersection of $\bfD(\veps)$
with the filtration in the ambient space)  
\[
\bfD(\veps)=\bfD(\veps)_0 \supset \bfD(\veps)_1\supset \dots. 
\]
 In particular, for each $r\ge 0$ one gets a map
 \begin{equation}\label{eq:vepsto0} 
 \bfD(\veps)_r/\bfD(\veps)_{r+1} \to \bigotimes_{b\in\Delta_0} D_b,\ 
 p\mapsto \frac{p}{\veps^r}|_{\veps=0},
 \end{equation}
$p\in\left(\bigotimes_{b\in\Delta_0} D_b\right)[\veps]$. Then $\bfD(0)$ is spanned by the 
images of all the maps \eqref{eq:vepsto0}.
In particular, one has the following lemma.

\begin{lem}
The dimension of the module $\bfD(0)$ is equal to $\dim \bfD(\veps)$ for 
$\veps\ne 0$.
\end{lem}
\begin{proof}
Let us choose a basis of $\bfD(\veps)$ compatible with the filtration 
$\bfD(\veps)_r$ (as above, $\veps$ is considered as a formal parameter). 
More precisely, we first find a maximal $r$ such that 
$\bfD(\veps)\cap \veps^r\left(\bigotimes_{b} D_b\right)[\veps]$ is non-empty.
We fix a (finite) basis $B_r$ of this intersection and then pass to the intersection $\bfD(\veps)\cap \veps^{r-1}\left(\bigotimes_{b} D_b\right)[\veps]$.
We complete $B_r$ to a basis of this intersection; let us denote the set of added 
elements by $B_{r-1}$ (thus, $B_r\sqcup B_{r-1}$ is a basis of
$\bfD(\veps)\cap \veps^{r-1}\left(\bigotimes_{b} D_b\right)[\veps]$). We then 
pass to $r-2$ and so on until we fix $B_0$. Now let $B=\bigsqcup_{\ell=0}^r B_\ell$. Then, by construction, $B$ has $\dim \bfD(\veps)$ elements and, by definition, the elements 
$\frac{b}{\veps^\ell}|_{\veps=0}$, $b\in B_\ell$, $\ell =0,\dots, r$ form 
a basis of $\bfD(0)$. Hence, $\dim \bfD(0)= \dim \bfD(\veps)$.         
\end{proof}

\begin{lem}\label{lem:D0Iw}
The module $\bfD(0)$ admits a natural action of the Iwahori algebra.
\end{lem}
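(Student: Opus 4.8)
The module $\bfD(\veps)$ is a representation of $\fa(\veps)$ for every $\veps\neq 0$, and by Lemma~\ref{lem:aIw} the Lie algebra $\fa(\veps)$ is isomorphic to $\mgl_n(\bO)$ for $\veps\neq 0$ while $\fa(0)\simeq\fI$. So the natural thing to exploit is that the family of subalgebras $\fa(\veps)\subset\bigoplus_{b\in\Delta_0}\mgl_n(\bO)$ has a well-defined flat limit at $\veps=0$, namely $\fa(0)$, and that the family of submodules $\bfD(\veps)$ degenerates compatibly. Concretely, I would argue as follows. Fix a basis element $x(\veps)\in\fa(\veps)$ among the generators \eqref{eq:diag}, \eqref{eq:uppertr}, \eqref{eq:lowertr}; each such generator is a polynomial in $\veps$ with coefficients in $\bigoplus_b\mgl_n(\bO)$, hence acts on $\left(\bigotimes_b D_b\right)[\veps]$ preserving the $\bC[\veps]$-module structure. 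The key point is that the $\veps$-adic filtration $\bigl(\bigotimes_b D_b\bigr)[\veps]\supset\veps\bigl(\bigotimes_b D_b\bigr)[\veps]\supset\cdots$ is preserved by the action of $\fa(\veps)$, and therefore $\fa(\veps)$ induces an action on the associated graded; the $\veps=0$ specialization of this induced action is exactly an action of $\fa(0)=\fI$.

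In more detail: the diagonal part \eqref{eq:diag} and upper-triangular part \eqref{eq:uppertr} are already independent of $\veps$ up to the benign substitution $z\mapsto z+\veps$, and span $\fb(\bO)$ at $\veps=0$; the lower-triangular generators \eqref{eq:lowertr} all carry an explicit factor of $(z+\veps)$, which at $\veps=0$ becomes a factor of $z$, producing $z\fn_-(\bO)$. Thus as $\veps\to 0$ the generators of $\fa(\veps)$ limit to generators of $\fI=\fb(\bO)\oplus z\fn_-(\bO)$. I would make this precise by showing: (i) for each generator $x(\veps)$ and each $r\ge 0$, $x(\veps)$ maps $\bfD(\veps)_r$ into $\bfD(\veps)_r$ (this uses that $x(\veps)$ is $\bC[\veps]$-linear and preserves $\bfD(\veps)$), so it induces an operator $\bar x$ on $\bigoplus_r \bfD(\veps)_r/\bfD(\veps)_{r+1}$; (ii) via the maps \eqref{eq:vepsto0} this identifies the associated graded with the subspace $\bfD(0)\subset\bigotimes_b D_b$; (iii) the induced operators $\bar x$ on $\bfD(0)$ satisfy the bracket relations of $\fI$ — this follows because the bracket in $\fa(\veps)$ is $\bC[\veps]$-bilinear, so it descends to the associated graded, where one reads off the relations by setting $\veps=0$. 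Since the Chevalley generators $f_i=f_i\otimes 1$ ($1\le i\le n-1$), $f_0=e_\theta\otimes z$ and $\fh$ generate $\fI$, and each of these is the $\veps\to 0$ limit of a corresponding generator of $\fa(\veps)$ (the $f_i$ and $\fh$ are literally $\veps$-independent, while $f_0=e_\theta z$ arises as the $\veps=0$ value of a lower-triangular generator \eqref{eq:lowertr} with $(z+\veps)\to z$), we obtain a genuine $\fI$-action on $\bfD(0)$.

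The main technical obstacle is point (i): verifying that the $\veps$-adic filtration on the ambient space really is stable under $\bfD(\veps)$, i.e. that applying an $\fa(\veps)$-generator to an element of $\bfD(\veps)$ divisible by $\veps^r$ yields again an element divisible by $\veps^r$ \emph{after intersecting with $\bfD(\veps)$}. This is where one must be slightly careful: $\bfD(\veps)_r$ is defined as $\bfD(\veps)\cap\veps^r(\bigotimes_b D_b)[\veps]$, and the containment $x(\veps)\cdot\bfD(\veps)_r\subset\bfD(\veps)_r$ needs both that $x(\veps)$ preserves $\bfD(\veps)$ (true by definition of $\bfD(\veps)$ as $\U(\fa(\veps))$-cyclic) and that it is $\bC[\veps]$-linear hence commutes with multiplication by $\veps^r$ (clear, since the matrix entries of the generators lie in $\bC[z,\veps]$). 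Granting this, the rest is a formal consequence of taking associated graded with respect to a compatible filtration, and the identification of the limiting Lie algebra with $\fI$ is Lemma~\ref{lem:aIw}. A minor secondary point worth spelling out is that the span of the images of \eqref{eq:vepsto0} is exactly what the associated graded computes, so that the $\fI$-action is defined on all of $\bfD(0)$ and not merely on a subspace.
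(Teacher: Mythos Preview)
Your proposal is correct and follows the same approach as the paper's own proof, which is essentially a two-sentence summary: each $\bfD(\veps)$ is $\fa(\veps)$-invariant, hence the limit $\bfD(0)$ is $\fa(0)$-invariant, and $\fa(0)\simeq\fI$ by Lemma~\ref{lem:aIw}. Your write-up unpacks this into the filtration/associated-graded language, which is exactly what the terse argument means. One small slip: you write that ``the $f_i$ and $\fh$ are literally $\veps$-independent,'' but the lower-triangular generators \eqref{eq:lowertr} (which is where the $f_i$ live) all carry explicit $(z+\veps)$ factors, and even the upper-triangular generators \eqref{eq:uppertr} do; only the diagonal part \eqref{eq:diag} is truly constant in $\veps$. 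This does not affect your argument, since all you need is that the generators are polynomial in $\veps$ and specialize at $\veps=0$ to a generating set for $\fa(0)\simeq\fI$, which is precisely Lemma~\ref{lem:aIw}.
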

\begin{proof}
Each space $\bfD(\veps)$ is invariant with respect to the action of the Lie algebra $\fa(\veps)$.
Hence $\bfD(0)$ is $\fa(0)$ invariant. By Lemma \ref{lem:aIw} $\fa(0)\simeq\fI$. 
\end{proof}

The goal of the rest of the paper is to describe the $\fI$ module $\bfD(0)$ in certain special cases.

\section{Finite-dimensional representations}\label{sec:fin-dim}
In this section we consider the simplest class of representations of the current algebras: the 
finite-dimensional representations of $\msl_n$. Even in this very special case the general construction 
produces non-trivial representations of the Iwahori algebra. We start with introducing the one-parameter
family of Lie algebras (a shadow of the general picture) and then describe the corresponding
representation theory.    

\subsection{Lie algebras}
As above, let $\Delta$ be the equioriented quiver with the set of vertices $\Delta_0=\bZ/n\bZ$
and the set of arrows $Q_1=\{b\to b+1, b\in \Delta_0\}$.
Let $M(\veps)=(M_i)_{i\in \Delta_0}$ be a $\Delta$-module of dimension $(n,\dots,n)$ defined as follows.
We identify all spaces $M_i$ with a vector space $\mathrm{span}\{w_j\}_{j=1}^n$. 
Let all the maps $M_{b\to b+1}$ be defined by $w_j\mapsto w_{j}$ for $j\ne b+1$ and 
$w_{b+1}\mapsto \veps w_{b+1}$ (this is the $z=0$ specialization of \eqref{eq:psib}). 
In particular, the rank of the composition of $s$ consecutive maps in $M(0)$ is equal to $n-s$.

\begin{rem}
For $\veps\ne 0$ the $\Delta$ module $M(\veps)$ is isomorphic to the direct sum of $n$ copies 
of the representation of dimension $(1,\dots,1)$ with all maps being identities.
If $\veps=0$, then $M(0)$ is the direct sum of $n$ different nilpotent indecomposable representations
of dimension $(1,\dots,1)$. More precisely, let $U(i;\ell)$, 
$i\in\bZ/n\bZ$, $\ell\ge 0$ be an indecomposable module of total dimension $\ell$ 
supported on vertices $i,i+1,\dots,i+\ell-1$. Then 
$M(0)\simeq\bigoplus_{i\in\bZ/n\bZ} U(i;n)$.
\end{rem}

We denote the endomorphism algebra $\End_\Delta(M(\veps))$ of $M(\veps)$ by $\fI_1(\veps)$.

\begin{lem}\label{lem:juggling}
The Lie algebra $\fI_1(\veps)$ is of dimension $n^2$ for all $\veps$.
For $\veps\ne 0$ it is isomorphic to $\mgl_n$. The Lie algebra $\fI_1(0)$
is isomorphic to the Iwahori algebra quotient $\fI/z\fI$.  
\end{lem}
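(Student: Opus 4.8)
The plan is to make $\fI_1(\veps)=\End_\Delta(M(\veps))$ completely explicit as a space of tuples of matrices, dispose of the generic case $\veps\neq 0$ by a short monodromy argument, treat $\veps=0$ by solving the defining intertwining equations by hand, and finally recognise the resulting Lie algebra as the reduction mod $z$ of $\fa(0)$, which by Lemma \ref{lem:aIw} yields the isomorphism with $\fI/z\fI$.

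First I would fix notation: let $D_b\colon\bC^n\to\bC^n$ be the map $M_{b\to b+1}$ of $M(\veps)$, i.e. the diagonal matrix that is the identity except that its $(b+1)$-st diagonal entry equals $\veps$ (coordinates read modulo $n$). By definition of the endomorphism algebra of a $\Delta$-representation, $\fI_1(\veps)$ consists of the tuples $(A_b)_{b\in\bZ/n\bZ}$ of complex $n\times n$ matrices with $D_bA_b=A_{b+1}D_b$ for all $b$; this is the $z=0$ specialisation of the description of $\End_\Delta(M)$ from the start of Section \ref{sec:construction}. For $\veps\neq 0$ every $D_b$ is invertible, so $A_{b+1}=D_bA_bD_b^{-1}$ and the whole tuple is determined by $A_0$, subject only to the cyclic relation $A_0=\mu A_0\mu^{-1}$ with $\mu:=D_{n-1}\cdots D_0$; since each $D_b$ rescales a distinct coordinate by $\veps$, one has $\mu=\veps\cdot\Id$, which is central, so the relation is vacuous and $(A_b)\mapsto A_0$ is a Lie algebra isomorphism $\fI_1(\veps)\cong\mgl_n$, hence $\dim\fI_1(\veps)=n^2$.

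For $\veps=0$ I would expand $D_bA_b=A_{b+1}D_b$ (with $D_b=\Id-E_{b+1,b+1}$) entry by entry. The outcome is: for each $b$ the $(b+1)$-st column of $A_b$ and the $(b+1)$-st row of $A_{b+1}$ vanish off the diagonal, while $(A_b)_{kl}=(A_{b+1})_{kl}$ whenever $b+1\notin\{k,l\}$. Hence for a fixed diagonal slot $(i,i)$ the entry $(A_b)_{ii}$ is constant in $b$ — one free parameter — and for a fixed off-diagonal slot $(k,l)$ the sequence $b\mapsto(A_b)_{kl}$ is constant along the cyclic chain of vertices, with jumps permitted only across the two (distinct) edges $(k-1,k)$ and $(l-1,l)$ and with forced vanishing at the vertices $k$ and $l-1$; it is therefore identically zero on the arc running from $k$ to $l-1$ and equal to a single parameter on the complementary arc. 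This gives $n+n(n-1)=n^2$ parameters, so $\dim\fI_1(0)=n^2$, and the $n^2$ resulting elementary tuples (the constant diagonal ones, and for each ordered pair $(k,l)$ the tuple equal to $E_{kl}$ on one arc and $0$ on the other) form a basis of $\fI_1(0)$.

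It then remains to match the Lie structure at $\veps=0$ with $\fI/z\fI$. Here I would use that $\fa(0)$ is a free $\bO$-module of rank $n^2$ on the generators \eqref{eq:diag}, \eqref{eq:uppertr}, \eqref{eq:lowertr} (at $\veps=0$) and that, by Lemma \ref{lem:aIw}, projection onto the $0$-th component is an $\bO$-linear Lie algebra isomorphism $\fa(0)\cong\fI=\fb(\bO)\oplus z\fn_-(\bO)$ — where, consistently with Lemma \ref{lem:aIw}, $\fI$ is understood in its $\mgl_n$ version, whose quotient by $z\fI$ has dimension $n^2$. Componentwise reduction modulo $z$ is a Lie algebra homomorphism $\bigoplus_b\mgl_n(\bO)\to\bigoplus_b\mgl_n(\bC)$; restricted to $\fa(0)$ its image lies in $\End_\Delta(M(0))=\fI_1(0)$ (set $z=0$ in $\psi_bA_b=A_{b+1}\psi_b$), its kernel is $z\fa(0)$, and it surjects onto $\fI_1(0)$ because the $z=0$ reductions of the generators \eqref{eq:diag}, \eqref{eq:uppertr}, \eqref{eq:lowertr} are precisely the elementary tuples produced above. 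Combining these facts gives $\fI_1(0)\cong\fa(0)/z\fa(0)\cong\fI/z\fI$. The main obstacle — really the only non-formal point — is the entrywise analysis at $\veps=0$ together with the bookkeeping that matches the reductions of the standard generators of $\fa(0)$ to the elementary tuples; the cyclic index chasing there needs care and is presumably what the name ``juggling'' alludes to.
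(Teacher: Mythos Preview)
Your proof is correct and follows the same explicit-computation approach as the paper, only with the details filled in (the paper defers the entrywise analysis at $\veps=0$ to \cite{FLP23} and simply states the answer). The one genuine difference is in the last step: the paper exhibits $\fI_1(0)\cong\fI/z\fI$ by embedding $\fI_1(0)$ into $\bigoplus_b\mgl_n\otimes\bC[z]/z^2$, artificially tagging the lower-triangular tuples with a $z$ so that the $\fI/z\fI$ bracket is visible by inspection; you instead observe that $\fI_1(0)$ is the $z=0$ reduction of $\fa(0)$ and invoke Lemma~\ref{lem:aIw}. Your route is a bit cleaner and more conceptual, and it also makes transparent why the $\msl_n$/$\mgl_n$ discrepancy in the dimension count ($n^2$ versus $n^2-1$) must be resolved by reading $\fI$ in its $\mgl_n$ version---a point the paper leaves implicit.
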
  
\begin{proof}
For $\veps\ne 0$ the operators $M(\veps)_{i\to i+1}$ identify $M_i$ with $M_{i+1}$ and hence the algebra of endomorphisms of the quiver representation $M(\veps)$ is isomorphic to $\mgl_n$.

If $\veps=0$, then one computes the endomorphism algebra explicitly as follows (see \cite{FLP23}). The Lie algebra $\fI_1(0)$ (which we denote in what follows by $\fI_1$)
is embedded into the direct sum over $b\in\Delta_0$ of the Lie algebras $\mgl_n$.
The image is spanned by the following collections $(x_b)_{b\in\Delta_0}$:
\begin{gather*}
i=1,\dots,n:\ x_b=E_{i,i},\\
1\le i<j \le n:\ x_b=0, i\le b<j;\ x_b = E_{i,j} \text{ otherwise},\\
1\le j<i \le n:\ x_b=E_{i,j}, j\le b<i;\ x_b = 0 \text{ otherwise}
\end{gather*}
(see \eqref{eq:diag}, \eqref{eq:uppertr}, \eqref{eq:lowertr}). One easily
checks that the span of these elements is isomorphic to the Iwahori algebra quotient $\fI/z\fI$; in particular, the elements in the second line --
with $i<j$ -- span the Lie algebra isomorphic to the upper-triangular subalgebra of $\mgl_n$ and the elements from the third line pairwise commute.
\end{proof}

\begin{lem}\label{lem:cycsym}
The algebra $\fI_1$ admits an action of the cyclic group $\bZ/n\bZ$, the group of symmetries
of the quiver $\Delta_0$.
\end{lem}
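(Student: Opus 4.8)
The plan is to exhibit the $\bZ/n\bZ$-action directly from the functorial description of $\fI_1$ as the endomorphism algebra of the quiver representation $M(0)$. Let $\rho$ be the generator of the cyclic group of automorphisms of the quiver $\Delta$, acting on vertices by $b\mapsto b+1$ and correspondingly on arrows. First I would observe that pulling back $M(0)$ along $\rho$ gives another $\Delta$-module $\rho^*M(0)$, and since $M(0)\simeq\bigoplus_{i\in\bZ/n\bZ}U(i;n)$ and $\rho^*$ simply permutes the indecomposable summands cyclically ($\rho^*U(i;n)\simeq U(i-1;n)$), we get $\rho^*M(0)\simeq M(0)$. Any such isomorphism $\phi:M(0)\to\rho^*M(0)$ of quiver representations then induces an algebra automorphism of $\End_\Delta(M(0))=\fI_1$ by conjugation, $A\mapsto \phi^{-1}\circ\rho^*(A)\circ\phi$; doing this $n$ times returns (up to an inner automorphism, or exactly, if $\phi$ is normalized so that $\phi^n=\Id$) the identity. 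This produces the desired $\bZ/n\bZ$-action.

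Concretely, I would make this explicit using the embedding of $\fI_1$ into $\bigoplus_{b\in\Delta_0}\mgl_n\otimes\bC[z]/z^2$ from Lemma \ref{lem:juggling}. The map $\rho$ sends the collection $(x_b)_{b\in\Delta_0}$ to the cyclically shifted collection $(x_{b-1})_{b\in\Delta_0}$; one checks from the explicit spanning set in the proof of Lemma \ref{lem:juggling} that this shift does \emph{not} preserve the image on the nose — it conjugates the defining "walls" $\{i\le b<j\}$ — so one must compose with a fixed-matrix conjugation to land back in $\fI_1$. The point is exactly that all three families of generators (diagonal $E_{i,i}$; upper-triangular $E_{i,j}$ with a $0$ inserted on the segment $i\le b<j$; strictly-lower $zE_{i,j}$ supported on $j\le b<i$) are permuted among themselves by "shift $b$ and relabel", because the combinatorics of which vertices carry a $0$, an $E_{i,j}$, or a $zE_{i,j}$ depends only on the cyclic position of $b$ relative to the interval $[i,j)$ (or $[j,i)$) in $\bZ/n\bZ$. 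So I would verify that conjugation by the permutation-type matrix realizing $U(i;n)\mapsto U(i+1;n)$ carries each generator to the correspondingly-labeled generator, and that it has order $n$.

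The main obstacle — and it is really a bookkeeping obstacle rather than a conceptual one — is getting the conjugating isomorphism $\phi$ right: the three indecomposables $U(i;n)$, $U(i+1;n)$, etc., are supported on overlapping cyclic intervals, and the nilpotent structure maps of $M(0)$ (which drop a coordinate at each step) must be matched up correctly under the shift, so the matrix of $\phi$ on each $M_b$ is a cyclic coordinate permutation twisted by the appropriate "which coordinate is currently killed" data. Once $\phi$ is written down, checking $\phi^{-1}\rho^*(\cdot)\phi$ fixes the spanning set is the direct computation alluded to in the lemma's (presumably one-line) proof, and checking $\phi^n=\Id$ (after normalization) is immediate. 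Alternatively, if one prefers to avoid coordinates entirely, the cleaner argument is purely the functorial one in the first paragraph: $\rho$ is a quiver automorphism, $M(0)$ is $\rho$-invariant up to isomorphism, hence $\End_\Delta(M(0))$ inherits the action, and cyclicity of order $n$ follows since $\rho^n=\Id_\Delta$ and $M(0)$ has no nonscalar self-isomorphisms obstructing the normalization because $M(0)$ is multiplicity-free as a sum of pairwise non-isomorphic indecomposables.
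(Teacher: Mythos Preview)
Your approach is essentially the same as the paper's: both use that $M(0)$ (in fact $M(\veps)$ for every $\veps$) is invariant under the quiver automorphism $\rho$, and then transport this symmetry to $\End_\Delta(M(\veps))=\fI_1(\veps)$.

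The one place where you overthink things is the ``bookkeeping obstacle.'' The paper simply writes down the intertwining isomorphism: it is the plain cyclic coordinate rotation $\mathrm{rot}:w_i\mapsto w_{i+1}$ applied at every vertex, combined with the index shift $b\mapsto b+1$; no additional twist by ``which coordinate is currently killed'' is needed. One checks in one line that $\mathrm{rot}\circ M_{b\to b+1}=M_{b+1\to b+2}\circ\mathrm{rot}$, since the former sends $w_{b+1}\mapsto \veps w_{b+2}$ and fixes the other $w_j\mapsto w_{j+1}$, exactly as the latter does. Because $\mathrm{rot}^n=\Id$ on the nose, your normalization worry also evaporates. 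So your functorial paragraph is correct and is precisely what the paper does; the explicit second paragraph and the multiplicity-free argument at the end are unnecessary detours once you notice that $\phi=\mathrm{rot}$ works uniformly.
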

\begin{proof}
The representation $M(\veps)$ admits a natural $\bZ/n\bZ$ symmetry. Namely, let 
$\mathrm{rot}\in\mathrm{End}(\mathrm{span}\{w_j\}_{j=1}^n)$ be the rotation operator 
sending $w_i$ to $w_{i+1}$ (assuming $w_{n+1}=w_1$). Then a generator of $\bZ/n\bZ$
sends $(m_b)_b\in M(\veps)$ to  $(\mathrm{rot}.m_{b-1})_b$. Now since $\fI_1(\veps)$ is the space
of endomorphisms of $M(\veps)$, the cyclic group action on $M(\veps)$ indices the action
on $\fI_1(\veps)$ (for all $\veps$, including $\veps=0$). 
\end{proof}

\begin{example}
Let $n=3$. Then the vertices of the quiver are labeled by the elements of $\bZ/3\bZ=\{0,1,2\}$ and $\fI_1(\veps)$ is spanned by
the elements  
\begin{gather*}
(E_{1,1},E_{1,1},E_{1,1}),\ (E_{2,2},E_{2,2},E_{2,2}),\ (E_{3,3},E_{3,3},E_{3,3}),\\
(E_{1,2},\veps E_{1,2},E_{1,2}),\ (E_{2,3},E_{2,3},\veps E_{2,3}),\ (E_{1,3}, \veps E_{1,3},\veps E_{1,3}),\\
(\veps E_{2,1},E_{2,1},\veps E_{2,1}),\ (\veps E_{3,2},\veps E_{3,2}, E_{3,2}),\ (\veps E_{3,1}, E_{3,1},E_{3,1}).
\end{gather*}	
In particular, the non-diagonal parts of $\fI_1(0)$ are of the form
\begin{gather*}
(E_{1,2},0,E_{1,2}),\ (E_{2,3},E_{2,3},0),\ (E_{1,3}, 0,0),\\
(0,E_{2,1},0),\ (0,0, E_{3,2}),\ (0, E_{3,1},E_{3,1}).
\end{gather*}	
\end{example}

\subsection{Fundamental representations}
From Lemma \ref{lem:juggling} one gets the following explicit description:
$\fI=\fI_1(0)\simeq \fb\oplus(\fn_-)^a$, where $\fb$ is the upper-triangular Borel 
subalgebra in $\mgl_n$ and $(\fn_-)^a$ is the abelian Lie algebra with the underlying vector space 
being the space of strictly lower triangular matrices (see \cite{BR24,FFL11,F12}). The space $(\fn_-)^a$ is an abelian ideal in
$\fI_1$ and the action of $\fb$ on the ideal is induced by the isomorphism 
$(\fn_-)^a\simeq \mgl_n/\fb$.    

\begin{rem}
The Lie algebra $\mgl_n$ contains an identity element, which we sometimes ignore 
and pass to $\msl_n$.
\end{rem}

Recall the embedding $\fI_1\subset \bigoplus_{b} \mgl_n$. The projection to the $b$-th summand
$\fI_1\to\mgl_n$ produces the fundamental
representations $V_{\om_k}^{(b)}$ of $\fI_1$ for all $k=1,\dots,n-1$ and $b=0,\dots,n-1$.
In particular, for all $b$ one has the vector space isomorphism $V_{\om_k}^{(b)}\simeq \Lambda^k(\bC^n)$.

\begin{example}\label{ex:b=0}
For $b=0$ the algebra $\fI_1\simeq \fb\oplus(\fn_-)^a$ acts on $V_{\om_k}^{(0)}$ as follows:
$\fb$ acts in the usual way on $\Lambda^k(\bC^n)$ and $(\fn_-)^a$ acts trivially.
\end{example}
 
Recall the basis $w_1,\dots,w_n$ of the $n$-dimensional ambient vector space $\bC^n$. 
 
\begin{lem}
The $\fI_1$ module $V_{\om_k}^{(b)}$ has a unique (up to a scalar) cyclic vector
$c^{(b)}_k$ and a unique (up to a scalar) cocyclic vector $cc^{(b)}_k$. These vectors are
explicitly given by
\begin{gather*}
c^{(b)}_k = w_b\wedge w_{b-1}\wedge \dots\wedge w_{b-k+1},\\
cc^{(b)}_k =  w_{b+1}\wedge w_{b+2}\wedge \dots\wedge w_{b+k},
\end{gather*}	  
where the indices are taken modulo $n$. The cocyclic vector is annihilated by all
operators from $\fn\oplus(\fn_-)^a$. 
\end{lem}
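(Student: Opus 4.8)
The plan is to work entirely inside the exterior algebra $\Lambda^{k}(\bC^n)$, using the explicit description $\fI_1\simeq\fb\oplus(\fn_-)^a$ from Lemma~\ref{lem:juggling} together with the cyclic symmetry of Lemma~\ref{lem:cycsym}. First I would reduce to the case $b=0$: the cyclic group $\bZ/n\bZ$ acts on $\fI_1$ and simultaneously sends $V_{\om_k}^{(0)}$ to $V_{\om_k}^{(b)}$ (by Lemma~\ref{lem:cycsym} and the fact that the projection $\fI_1\to\mgl_n$ to the $b$-th summand is intertwined by the rotation operator $\mathrm{rot}$), so a cyclic vector for $b=0$ is carried to a cyclic vector for general $b$, and $\mathrm{rot}$ sends $w_0\wedge\dots\wedge w_{-k+1}$ (indices mod $n$) to $w_b\wedge\dots\wedge w_{b-k+1}$; likewise for the cocyclic vector. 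So it suffices to treat $b=0$, where by Example~\ref{ex:b=0} the algebra acts via $\fb$ acting the usual way on $\Lambda^k(\bC^n)$ and $(\fn_-)^a$ acting by zero.

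For the cyclic vector at $b=0$: the claimed vector is $c_k^{(0)}=w_n\wedge w_{n-1}\wedge\dots\wedge w_{n-k+1}$ (i.e.\ the lowest weight vector of $\Lambda^k(\bC^n)$ as a $\mgl_n$-module, since indices are mod $n$ and $w_0=w_n$). Under $\fb$ this is the lowest weight vector, so $\U(\fn)$ applied to it is all of $\Lambda^k(\bC^n)$ — this is the standard fact that $\Lambda^k(\bC^n)$ is generated over $\fn$ from its lowest weight line. Since $(\fn_-)^a$ acts trivially, $\U(\fI_1).c_k^{(0)}=\U(\fn).c_k^{(0)}=V_{\om_k}^{(0)}$, so it is cyclic. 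Uniqueness: a cyclic vector must in particular not lie in the $\fn$-submodule generated by any vector of strictly higher weight; an elementary weight argument (any weight vector other than the lowest generates a proper $\fn$-submodule, since $\Lambda^k$ is multiplicity-free as an $\fh$-module and the weights are partially ordered with a unique minimum) forces the cyclic vector to be the lowest weight vector up to scalar. For a non-weight cyclic vector one decomposes into weight components and notes the top component (in the dominance order restricted to the support) would generate a proper submodule — so in fact it must be a scalar multiple of $c_k^{(0)}$.

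For the cocyclic vector at $b=0$: dually, a cocyclic vector is one not annihilated by any functional vanishing on a proper submodule; equivalently it spans the unique minimal submodule, which for $\Lambda^k(\bC^n)$ over $\fb$ (with $(\fn_-)^a$ trivial) is the highest weight line $\bC\, w_1\wedge\dots\wedge w_k$. This matches $cc_k^{(0)}=w_1\wedge\dots\wedge w_k$. The stated annihilation is immediate: $w_1\wedge\dots\wedge w_k$ is the $\mgl_n$-highest weight vector of $\Lambda^k$, hence killed by $\fn\subset\fb$, and $(\fn_-)^a$ annihilates everything in $V_{\om_k}^{(0)}$ by Example~\ref{ex:b=0}; transporting by $\mathrm{rot}$, $cc_k^{(b)}$ is killed by $\mathrm{rot}(\fn)\oplus(\fn_-)^a$ acting on $V_{\om_k}^{(b)}$, and one checks $\mathrm{rot}(\fn\oplus(\fn_-)^a)=\fn\oplus(\fn_-)^a$ for the image in the $b$-th summand (this is where the precise form of the non-diagonal generators in Lemma~\ref{lem:juggling} is used). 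Uniqueness of the cocyclic vector follows from the same multiplicity-one argument applied to the socle.

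The main obstacle I anticipate is bookkeeping rather than conceptual: keeping the index conventions consistent when transporting along $\mathrm{rot}$ (the shift $w_i\mapsto w_{i+1}$ versus the appearance of $w_b, w_{b-1},\dots$ in the statement forces one to be careful about whether $\mathrm{rot}$ or $\mathrm{rot}^{-1}$ conjugates the $b=0$ picture to the $b$-th one), and verifying that under this transport the subalgebra $\fn\oplus(\fn_-)^a\subset\fI_1$ maps to itself so that the annihilation statement for $cc_k^{(b)}$ reads off correctly from the $b=0$ case. Both are finite checks using the explicit matrices in Lemma~\ref{lem:juggling} and the $n=3$ example.
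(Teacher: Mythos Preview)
Your approach is essentially the same as the paper's: reduce to $b=0$ via the cyclic symmetry of Lemma~\ref{lem:cycsym}, use Example~\ref{ex:b=0} to identify the $\fI_1$-action there with the $\fb$-action on $\Lambda^k(\bC^n)$ (with $(\fn_-)^a$ trivial), recognize the cyclic and cocyclic vectors as the lowest and highest weight lines, and then transport by rotation. You supply more detail than the paper does on uniqueness and on the rotation-invariance of $\fn\oplus(\fn_-)^a$, but the strategy is identical.
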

\begin{proof}
Let us start with the case $b=0$. In this case the cyclic vector
is given by $w_n\wedge \dots\wedge w_{n-k+1}$ and the cocylic vector is 
$w_1\wedge \dots\wedge w_{k}$. As mentioned above (see Example \ref{ex:b=0}) for $b=0$ 
the action of $\fI_1$ coincides with the action of $\fb$ (i.e. $(\fn_-)^a$ acts trivially). 
Clearly, the cocylic vector $cc^{(0)}_k$ is killed by both $\fn$ and $(\fn_-)^a$. 
Now the general case follows form the existence of the rotation symmetry from Lemma \ref{lem:cycsym}.
\end{proof}

\subsection{Family of modules}
Let $\la=\sum_{i=1}^{n-1} m_i\om_i$ be a dominant integral $\msl_n$ weight. We fix a decomposition
$\la=\sum_{b\in \Delta_0} \la^{(b)}$ into a sum of $n$ dominant integral weights (one for
each vertex of our cyclic quiver). 
In what follows we denote by $\bar\la$ the collection $(\la^{(b)})_{b\in \Delta_0}$.
Each dominant integral weight $\la$  gives rise to an irreducible highest weight $\msl_n$ module $V_{\la}$
with lowest weight vector $v_\la$;
we naturally extend the $\msl_n$ action to the $\mgl_n$ action (considering $\la$ as a partition with the 
last zero entry). 

\begin{rem}
We  use the lowest weight vectors instead of the highest weight vectors in order to make the picture compatible
with the theory of affine Demazure modules and Iwahori algebras. However, this choice is not important due
to the Weyl group symmetry.      
\end{rem}

The Cartan embedding 
$V_\la\hookrightarrow \bigotimes_{b\in \Delta_0} V_{\la^{(b)}}$, $v_\la\mapsto \otimes_{b\in \Delta_0} v_{\la^{(b)}}$ realizes $V_\la$ inside the tensor product; by definition,  
$V_\la=\U(\msl_n)\cdot \otimes_{b\in \Delta_0} v_{\la^{(b)}}=\bigodot_b V_{\la^{(b)}}$.
The tensor product admits a natural structure of $\fI_1(\veps)$-module for any $\veps$ induced
via the embedding $\fI_1(\veps)\hookrightarrow \bigoplus_{b\in \Delta_0} \mgl_n$.
For $\veps\ne 0$ we define
\[
V_{\bar\la}(\veps)=\U(\fI_1(\veps))\cdot \otimes_{b\in \Delta_0} v_{\la^{(b)}}\subset \bigotimes_{b\in \Delta_0} V_{\la^{(b)}}.
\]

\begin{lem}
For $\veps\ne 0$ one has an isomorphism of representations 
$V_\la\simeq  V_\barla(\veps)$ 	with respect to the identification $\mgl_n\simeq \fI_1(\veps)$.
\end{lem}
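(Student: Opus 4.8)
The plan is to exhibit the isomorphism $V_\la \simeq V_{\barla}(\veps)$ explicitly via a ``spreading out'' linear map and then check it intertwines the two actions. First I would recall that for $\veps \neq 0$ the quiver maps $M(\veps)_{b\to b+1}$ are all invertible, so that $M(\veps)$ is isomorphic to the direct sum of $n$ copies of the one-dimensional representation with identity maps; concretely, the rescaling operators $R_b := \mathrm{diag}(1,\dots,1,\veps^{-1},1,\dots,1)$ (with $\veps^{-1}$ in slot $b+1$) conjugate $M(\veps)$ to the constant representation. This is exactly the finite-dimensional ($z=0$) shadow of Lemma \ref{lem:aIw}: under this conjugation the embedding $\fI_1(\veps) \hookrightarrow \bigoplus_b \mgl_n$ becomes the diagonal embedding $x \mapsto (x,\dots,x)$ of $\mgl_n$, which is precisely the identification $\mgl_n \simeq \fI_1(\veps)$ referred to in the statement.

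Next I would transport this to the tensor product. Let $\Phi_b : V_{\la^{(b)}} \to V_{\la^{(b)}}$ be the linear isomorphism obtained by exponentiating the grading-type rescaling so that $\Phi_b$ conjugates the $\fI_1(\veps)$-action on the $b$-th factor into the standard $\mgl_n$-action; more precisely, $\Phi_b$ is built from the $b$-th component of the conjugation above, extended to $V_{\la^{(b)}} \hookrightarrow (\bC^n)^{\otimes m}$ diagonally. Crucially each $\Phi_b$ fixes the lowest weight vector $v_{\la^{(b)}}$ up to scalar (it is a product of diagonal torus-like operators, and $v_{\la^{(b)}}$ is a weight vector), so after normalizing we get $\Phi_b(v_{\la^{(b)}}) = v_{\la^{(b)}}$. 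Then $\Phi := \bigotimes_{b\in\Delta_0} \Phi_b$ is a linear automorphism of $\bigotimes_b V_{\la^{(b)}}$ sending $\otimes_b v_{\la^{(b)}}$ to itself, and by construction $\Phi$ carries the $\fI_1(\veps)$-action through the embedding $\fI_1(\veps)\hookrightarrow\bigoplus_b\mgl_n$ to the diagonal $\mgl_n$-action on $\bigotimes_b V_{\la^{(b)}}$.

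Finally I would combine the cyclicity statements. On one side, $V_{\barla}(\veps) = \U(\fI_1(\veps)).\otimes_b v_{\la^{(b)}}$ by definition. Applying $\Phi$ and using the intertwining property from the previous step, $\Phi(V_{\barla}(\veps)) = \U(\mgl_n).\otimes_b v_{\la^{(b)}}$ where $\mgl_n$ now acts diagonally; but this is exactly the Cartan component $\bigodot_b V_{\la^{(b)}} = V_\la$ under the Cartan embedding recalled just above the lemma. Hence $\Phi$ restricts to an isomorphism $V_{\barla}(\veps) \xrightarrow{\sim} V_\la$ which, by the intertwining property, is $\mgl_n$-equivariant for the identification $\mgl_n \simeq \fI_1(\veps)$.

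I expect the only genuinely delicate point to be bookkeeping: verifying carefully that the componentwise rescaling operators $\Phi_b$ really do conjugate the $\fI_1(\veps)$-action (as a subalgebra of $\bigoplus_b \mgl_n$, with the $\veps$-twisted matrices of the $n=3$ example) into the honest diagonal $\mgl_n$-action, and that the scalars can be normalized simultaneously so that the tensor of lowest weight vectors is preserved. Once the conjugation is set up correctly on a single factor $\bC^n$ — where it is immediate from the explicit form of $M(\veps)$ — everything else propagates formally to tensor powers and to the submodules $V_{\la^{(b)}}$, and the cyclic-generation statement then gives the isomorphism with no further work.
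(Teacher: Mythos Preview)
Your argument is correct, and it is in fact a more explicit version of what the paper does. The paper phrases the proof through the notion of $b$-admissibility: it asserts that each $V_{\la^{(b)}}$ is $b$-admissible (i.e., admits an automorphism $f_b$ fixing the cyclic vector and intertwining the standard $\fn$-action with the $\mathrm{sh}_b$-twisted one), and defers the verification to Lemma~\ref{lem:badm}, which checks that the twist $\mathrm{sh}_b$ preserves the defining relations of a Demazure module. Your approach supplies these $f_b$ concretely: the diagonal matrix $T_b=\mathrm{diag}(1,\dots,1,\veps,\dots,\veps)$ (with $b$ ones) satisfies $T_b^{-1} E_{i,j} T_b=\mathrm{sh}_b(E_{i,j})$ on the nose, so acting by $T_b^{\pm 1}$ on $V_{\la^{(b)}}$ gives exactly the required intertwiner, and the lowest weight vector is fixed after rescaling because $T_b$ lies in the torus.

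What each approach buys: your torus-conjugation argument is shorter and entirely elementary for the finite-dimensional case, avoiding any appeal to presentations of Demazure modules. The paper's route through $b$-admissibility and relations is set up so that the same proof skeleton applies verbatim in Section~\ref{sec:Dem}, where $\mathrm{sh}_b$ multiplies by $(z+\veps)$ rather than by a scalar and a torus element no longer suffices; there one genuinely needs the relation-based argument of Lemma~\ref{lem:badm}. One small point of care in your write-up: since the quiver is cyclic and the composite of all edge maps is $\veps\cdot\mathrm{Id}$, you cannot literally trivialize every arrow to the identity simultaneously; but you only need that after conjugation all edge maps become nonzero scalars, which forces the endomorphism algebra to be the diagonal $\mgl_n$, and that is enough for the rest of your argument.
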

\begin{proof}
Recall the notion of $b$-admissibility (Definition \ref{def:badm}). 
We show that for any $b=1,\dots,n-1$ any irreducible module $V_\mu$ is $b$-admissible. 
For this we need to find an isomorphism $f:V_\mu\to V_\mu$ such that 
$f(v_\mu)=v_\mu$ and $\mathrm{sh}_b(x)\circ f = f\circ x$ for any $x\in\fn$ (as operators on $V_\mu$).
The existence of such an isomorphism (in a more general settings) is shown in Lemma \ref{lem:badm} 
\end{proof}

For $\veps=0$ we set 
\begin{equation}
V_\barla(0)=\lim_{\veps\to 0} V_\la(\veps) \subset \bigotimes_{b\in \Delta_0} V_{\la^{(b)}}.
\end{equation}
The spaces $V_\barla=V_\barla(0)$ form a special case of the modules $\bfD(0)$ 
from the previous section.

\begin{rem}
One has the following explicit construction.
Let us consider the algebra 
$\fI_1(\veps)$ inside the direct sum $\bigoplus_{b\in \Delta_0} \mgl_n\T_\bC \bC[\veps]$.
In other words,
let us treat the parameter $\veps$ as an auxiliary variable. This algebra acts on the space
$\bigotimes_{b\in \Delta_0} V_{\la^{(b)}}[\veps]$ and $V_\barla(\veps)$ is a subspace generated by
$\fI(\veps)$ from the tensor product of cyclic vectors. One gets the standard decreasing
filtration by the spaces $F_r$, $r\ge 0$ given by
\[
F_r=V_\barla(\veps)\cap \veps^r \bigotimes_{b\in \Delta_0} V_{\la^{(b)}}[\veps].  
\]     
Then $V_\barla(0)=\bigoplus_{r\ge 0} \left(\veps^{-r} F_r\right) |_{\veps=0}$ (we note that the sum if effectively
finite).
\end{rem}

\begin{rem}
It is important to note that $V_\barla$ in general strictly contains 
$\U(\fI_1).\otimes_{b} v_{\la^{(b)}}$, i.e. $V_\barla$ is not generated from the tensor product 
of cyclic vectors.  
\end{rem}

\begin{lem}\label{lem:Iwaction}
The space $V_\barla(0)$ as a subspace of $\bigotimes_b V_{\la^{(b)}}$ admits a natural action of the 
algebra $\fI_1$ compatible with the embedding $\fI_1\subset \bigoplus_b \mgl_n$. 
\end{lem}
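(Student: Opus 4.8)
The plan is to imitate, in this finite-dimensional setting, the argument of Lemma \ref{lem:D0Iw}. The essential point is that the whole construction of $V_\barla(0)$ is a ``flat limit'' construction carried out inside the fixed ambient space $\bigotimes_b V_{\la^{(b)}}$, and that flatness of such limits automatically transports invariance under a family of algebras to invariance under the limit algebra, provided the algebras themselves form a nice family. First I would recall from Lemma \ref{lem:juggling} that $\fI_1(\veps) = \End_\Delta(M(\veps))$ has dimension $n^2$ for every $\veps$, so the $\fI_1(\veps)$ fit together into a single subalgebra $\fa_1$ of $\bigl(\bigoplus_{b\in\Delta_0}\mgl_n\bigr)\otimes_\bC\bC[\veps]$, free of rank $n^2$ over $\bC[\veps]$, whose fibre at $\veps$ is $\fI_1(\veps)$; at $\veps=0$ the fibre is $\fI_1 = \fI_1(0)$ by Lemma \ref{lem:juggling}. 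This $\fa_1$ acts on $\bigl(\bigotimes_b V_{\la^{(b)}}\bigr)[\veps]$, and by construction $V_\barla(\veps) = \U(\fa_1)\cdot\bigl(\otimes_b v_{\la^{(b)}}\bigr)$ is an $\fa_1$-submodule over $\bC[\veps]$.

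Next I would pass to the associated graded with respect to the $\veps$-adic filtration, exactly as in the construction of $\bfD(0)$. The filtration $F_r = V_\barla(\veps)\cap\veps^r\bigl(\bigotimes_b V_{\la^{(b)}}\bigr)[\veps]$ is stable under $\fa_1$ because $\fa_1$ is $\bC[\veps]$-linear, hence preserves $\veps$-degree; therefore $\bigoplus_r F_r/F_{r+1}$ carries an action of $\operatorname{gr}\fa_1$. Specializing $\veps\to 0$ in the leading coefficients, i.e. applying the maps $\veps^{-r}F_r|_{\veps=0}$ that define $V_\barla(0)$ inside $\bigotimes_b V_{\la^{(b)}}$, intertwines this graded action with the action of the $\veps=0$ fibre $\fa_1/\veps\fa_1 \cong \fI_1$. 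Concretely: if $x\in\fa_1$ and $p\in V_\barla(\veps)$ with $p\in F_r$, then $x\cdot p\in F_r$ as well (since $x$ has no negative powers of $\veps$ and $\fa_1$ is an algebra), and the class of $(x\cdot p)/\veps^r$ at $\veps=0$ depends only on the classes of $x/\veps^0$ and $p/\veps^r$; writing $x = x_0 + \veps x_1 + \cdots$ the leading term gives $(x_0|_{\veps=0})\cdot(p/\veps^r|_{\veps=0})$, and $x_0|_{\veps=0}$ ranges over $\fI_1$. Hence $V_\barla(0)$ is stable under $\fI_1\subset\bigoplus_b\mgl_n$, which is the claim. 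Alternatively, and even more cheaply, I could simply invoke Lemma \ref{lem:D0Iw}: the $V_\barla(0)$ are declared in the excerpt to be a special case of the modules $\bfD(0)$, and $\fI_1$ is the relevant quotient $\fI/z\fI$ of the Iwahori algebra acting through $\fa(0)$, so the finite-dimensional statement follows by restriction from the already-established infinite-dimensional one.

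The routine verification — that $\fa_1$ is $\bC[\veps]$-free of rank $n^2$ and that its fibres are the $\fI_1(\veps)$ — is immediate from the explicit generating collections \eqref{eq:diag}, \eqref{eq:uppertr}, \eqref{eq:lowertr} (set $z=0$, keep $\veps$ as the parameter), so I would present it as a one-line remark rather than a computation. The one place that needs a sentence of care is the claim that the $\veps$-adic filtration is genuinely exhausted in finitely many steps and that $\dim V_\barla(0) = \dim V_\barla(\veps) = \dim V_\la$, which is what makes the limit a ``degeneration'' rather than a collapse; this follows because each $F_r/F_{r+1}$ injects into the fixed finite-dimensional space $\bigotimes_b V_{\la^{(b)}}$ via the leading-coefficient map and these images are linearly independent in the same space (they live in distinct $\veps$-degrees inside $V_\barla(\veps)$, so a nonzero relation among them would lower the $\veps$-order of some element of $V_\barla(\veps)$, a contradiction). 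I expect no real obstacle here: the lemma is the finite-dimensional shadow of Lemma \ref{lem:D0Iw}, and the only subtlety is bookkeeping with the $\veps$-adic filtration, which the paper has already set up in the construction of $\bfD(0)$.
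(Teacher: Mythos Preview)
Your proposal is correct and takes essentially the same approach as the paper: the paper's own proof is the one-line remark ``As in Lemma \ref{lem:D0Iw}, the algebra $\fI_1$ is the limit $\veps\to 0$ of the algebras $\fI_1(\veps)$,'' which is precisely your ``cheap'' alternative of invoking Lemma \ref{lem:D0Iw}. Your more detailed filtration argument simply unpacks what that invocation means in the finite-dimensional setting, so there is no genuine difference in strategy.
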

\begin{proof}
As in Lemma \ref{lem:D0Iw}, the algebra $\fI_1$ is the limit $\veps\to 0$ of the algebras $\fI_1(\veps)$. 
\end{proof}

\subsection{Algebraic coherence conjecture}

Recall the Weyl group $W=S_n$. For a dominant integral weight $\la$ and $\sigma\in W$ let 
$v_{\sigma\la}\in V_\la$ be an extremal weight vector of weight $\sigma\la$. In  particular, if 
$\sigma=e$, then $v_{\sigma\la}$ is a highest weight vector and if $\sigma=w_0$ is the longest
element, then $v_{\sigma\la}=v_\la$ is the lowest weight vector.
Let $V_\barla=V_\barla(0)$.

\begin{lem}\label{lem:extrbel}
For any $\sigma\in S_n$ the tensor product of extremal vectors 
$\otimes_b v_{\sigma {\la^{(b)}}}$ belongs to $V_\barla$.
\end{lem}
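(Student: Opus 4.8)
The statement to prove is that for any $\sigma \in S_n$, the vector $\otimes_{b\in\Delta_0} v_{\sigma\la^{(b)}}$ lies in the degenerate module $V_{\barla}=V_{\barla}(0)$. The natural strategy is to produce, for each $\sigma$, an explicit element of $V_{\barla}(\veps)$ that specializes (after an appropriate rescaling by a power of $\veps$) to this tensor product as $\veps\to 0$. Since $V_{\barla}(\veps)\simeq V_\la$ by the preceding lemma and inside the genuine module $V_\la\hookrightarrow\bigotimes_b V_{\la^{(b)}}$ the extremal vector $v_{\sigma\la}$ does lie in $\U(\msl_n).\otimes_b v_{\la^{(b)}}$, the issue is purely one of controlling the $\veps$-degree: I must show that $v_{\sigma\la}$, expressed through the $\veps$-deformed operators $e_i(\veps)$ (equivalently the generators of $\fI_1(\veps)$), has leading $\veps$-term proportional to $\otimes_b v_{\sigma\la^{(b)}}$ rather than something that dies or blows up.

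\textbf{Key steps.} First I would fix a reduced word $\sigma = s_{i_1}\cdots s_{i_\ell}$ and recall that $v_{\sigma\la}$ is obtained from the lowest weight vector $v_\la=\otimes_b v_{\la^{(b)}}$ by applying a suitable monomial in the Chevalley generators $e_{i}$ (more precisely, by a sequence of $\msl_2$-lowering-string operations, each of the form $e_{i_k}^{(m_k)}$ acting on a lower extremal vector), landing in the one-dimensional weight space of weight $\sigma\la$. The second step is to replace each $e_i$ by the deformed operator $e_i(\veps)=\veps e_i^{(i)}+\sum_{b\ne i}e_i^{(b)}$ and track the tensor components: applying $e_i(\veps)^{(m)}$ to a tensor $\otimes_b u_b$ of weight vectors, the component at factor $i$ carries an extra $\veps$, so the "cheapest" way (lowest power of $\veps$) to move each factor $u_b$ from $v_{\tau\la^{(b)}}$ to $v_{\tau s_i\la^{(b)}}$ is to never use the factor $i$ unless forced — but one is never forced, because the divided-power string operator $e_i^{(m_i)}$ acts factor-by-factor via a coproduct and can lower each tensor factor independently. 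Concretely, the key combinatorial claim is that the $\veps^0$-coefficient of $\big(\prod_k e_{i_k}(\veps)^{(m_k)}\big)(\otimes_b v_{\la^{(b)}})$ equals $\otimes_b\big(\prod_k e_{i_k}^{(m_k)} v_{\la^{(b)}}\big)=\otimes_b v_{\sigma\la^{(b)}}$, since in each tensor factor $b$ the deformation only inserts $\veps$ when the lowering index coincides with $b$, and such insertions strictly raise the $\veps$-degree and hence do not contribute to the leading term — while the undeformed product at each factor $b$ computes exactly the extremal vector $v_{\sigma\la^{(b)}}$ of the factor $V_{\la^{(b)}}$. The third step is to conclude: this leading-term element is precisely a member of $V_{\barla}(\veps)$ (it is $\U(\fI_1(\veps)).\otimes_b v_{\la^{(b)}}$ applied to the cyclic vector), so by the definition of $V_{\barla}(0)$ via the $\veps$-adic filtration and the maps \eqref{eq:vepsto0}, its image $\otimes_b v_{\sigma\la^{(b)}}$ lies in $V_{\barla}(0)=V_{\barla}$.

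\textbf{Main obstacle.} The delicate point is verifying that the $\veps^0$-coefficient is genuinely nonzero, i.e. that the "diagonal" contribution $\otimes_b v_{\sigma\la^{(b)}}$ is not killed by cancellation among the many terms produced by the coproduct and the divided powers. This requires knowing that the divided-power operator $e_i^{(m)}$ applied to a tensor product of extremal vectors, each sitting at the bottom of its $i$-th $\msl_2$-string of length exactly the right size, produces the product of the tops of those strings with coefficient $1$ (a standard fact about extremal vectors: $e_i^{(m_i)} v_{\tau\la} = v_{\tau s_i\la}$ up to a sign/scalar normalization, where $m_i = -(\tau\la)(\al_i^\vee)$), together with the observation that using a deformed factor strictly increases the $\veps$-degree so those terms are irrelevant. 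One must also handle the bookkeeping that $\sigma\la$ may require several non-commuting string operators, so I would proceed by induction on $\ell(\sigma)$, at each stage noting that passing from the extremal vector $v_{\tau\la}$ (already shown to lie in $V_{\barla}$ with leading term $\otimes_b v_{\tau\la^{(b)}}$) to $v_{s_i\tau\la}$ costs exactly one application of $e_i(\veps)^{(m)}$ and the leading term updates factor-by-factor as claimed. I expect this inductive degree-tracking to be routine once the single-step claim is pinned down; the single-step nonvanishing of the $\msl_2$-string computation on each tensor factor is the real content.
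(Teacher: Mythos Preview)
Your overall strategy is sound, but the central combinatorial claim is wrong as stated. You assert that the $\veps^0$-coefficient of $\prod_k e_{i_k}(\veps)^{m_k}\cdot(\otimes_b v_{\la^{(b)}})$ equals $\otimes_b v_{\sigma\la^{(b)}}$, and that one is ``never forced'' to act on factor $i$. But setting $\veps=0$ in $e_i(\veps)=\veps\, e_i^{(i)}+\sum_{b\ne i}e_i^{(b)}$ kills the action on the $i$-th tensor factor entirely, so the $\veps^0$-term cannot move that factor at all. Already for a single step $\sigma=s_i$ with $m=\sum_b m_b$ and $m_i>0$, the $\veps^0$-part of $e_i(\veps)^m(\otimes_b v_{\la^{(b)}})$ must place all $m$ copies of $e_i$ among factors $b\ne i$, each of which absorbs at most $m_b$; since $\sum_{b\ne i}m_b=m-m_i<m$, the $\veps^0$-coefficient vanishes. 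The genuine leading term lives in $\veps$-degree $m_i$, and \emph{that} coefficient is (up to a nonzero multinomial factor) the desired $\otimes_b v_{s_i\la^{(b)}}$. So your inductive scheme can be repaired, but the relevant $\veps$-degree is positive and increases along the reduced word; you have not tracked this, and the claim you do state is false.

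The paper's proof sidesteps the computation entirely. For $\veps\ne 0$ the space $V_{\barla}(\veps)\simeq V_\la$ contains a nonzero vector of weight $\sigma\la$; but $\sigma\la=\sigma(\sum_b\la^{(b)})$ is an extremal weight, so the weight-$\sigma\la$ subspace of the ambient tensor product $\bigotimes_b V_{\la^{(b)}}$ is already one-dimensional, spanned by $\otimes_b v_{\sigma\la^{(b)}}$. Hence this $\veps$-independent vector lies in $V_{\barla}(\veps)$ for every $\veps\ne 0$, and therefore in the limit $V_{\barla}(0)$. Note that this same one-dimensionality is exactly what would rescue your argument without any induction: whatever the leading $\veps$-degree of your operator expression turns out to be, the leading coefficient has weight $\sigma\la$ and is therefore automatically a scalar multiple of $\otimes_b v_{\sigma\la^{(b)}}$.
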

\begin{proof}
We note that $v_{\sigma\barla}$ belongs to $V_\barla(\veps)$ for any non-zero $\veps$.
In fact, any extremal vector $v_{\sigma\la}$ can be reached from $v_\la$ by successive
application of root vectors, which are still present in $\fI_1(\veps)$ for any non-zero
$\veps$. Now it remains to note that  $v_{\sigma\barla} \in \bigotimes_b V_{\la^{(b)}}$
spans the weight $\sum_b \sigma\la^{(b)}$ subspace. 
\end{proof}	
	
\begin{conj}\label{conj:main}
The space $V_\barla$ is generated from the vectors $v_{\sigma\barla}$ by the action of $\fI_1$, i.e.
\[
V_\barla = \sum_{\sigma\in W} \U(\fI_1) v_{\sigma\barla}\subset \bigotimes_b V_{\la^{(b)}}.
\]
\end{conj}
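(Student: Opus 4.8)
\emph{Proof strategy.} One inclusion is immediate: by Lemma~\ref{lem:extrbel} every $v_{\sigma\barla}$ lies in the $\fI_1$-module $V_\barla$ (Lemma~\ref{lem:Iwaction}), so $N:=\sum_{\sigma\in W}\U(\fI_1)v_{\sigma\barla}\subseteq V_\barla$. Since $V_\barla=\lim_{\veps\to0}V_\barla(\veps)$ is a limit of subspaces all of dimension $\dim V_\la$, the conjecture is equivalent to the numerical statement $\dim N=\dim V_\la$; and as the $\veps\to0$ procedure preserves the $\fh$-grading (so $\ch V_\barla=\ch V_\la$), it is in turn equivalent to showing that $N$ already meets every $\fh$-weight space of $\bigotimes_b V_{\la^{(b)}}$ in the full multiplicity predicted by $V_\la$. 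Before attempting this in general I would dispose of the base case $\la=\om_j$: here $V_\barla$ is one of the fundamental modules $V_{\om_j}^{(b)}\cong\Lambda^j(\bC^n)$, and using the explicit action of $\fI_1\cong\fb\oplus(\fn_-)^a$ on $\Lambda^j(\bC^n)$ together with the description of its extremal lines one checks directly that the $W$-orbit of extremal vectors generates everything under $\fI_1$.

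For the general case I would use the PBW decomposition $\U(\fI_1)=\U((\fn_-)^a)\,\U(\fb)$ arising from $\fI_1\cong\fb\oplus(\fn_-)^a$ with $(\fn_-)^a$ abelian, so that $N=\sum_{\sigma}\U((\fn_-)^a)\,D_\sigma$, where $D_\sigma:=\U(\fb)v_{\sigma\barla}$ is a degenerate Demazure-type submodule --- for $\veps\neq0$ it goes over, under $V_\barla(\veps)\cong V_\la$, to the honest Demazure module $D_{\sigma\la}\subseteq V_\la$. In the setting of Theorem~A ($\la^{(b)}=k_b\om_j$) the $D_\sigma$ are affine Demazure modules and the desired identity $\dim N=\dim V_\la$ is exactly the character-refined coherence conjecture \cite{Zhu14,HY24} for the union $\bigcup_{w\in W}Y_{z^{w\la}}$. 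For general $\barla$ the plan is to write down a PBW-type spanning set of $N$ by applying ordered monomials in the abelian $(\fn_-)^a$ to bases of the $D_\sigma$, and to match it, weight space by weight space, against a monomial basis of $V_\la$ --- which is natural here since $\fb\oplus(\fn_-)^a$ is the Feigin-degenerate $\mgl_n$ and carries Feigin--Finkelberg--Littelmann-type bases \cite{FFL11} --- showing that exactly enough monomials survive the $\veps\to0$ limit. A conceptually cleaner, equivalent reformulation is geometric: the degeneration $V_\la\rightsquigarrow V_\barla$ is dual to a degeneration of $SL_n/B$ into a union $\bigcup_{\sigma\in W}Z_\sigma$ of (translated) Kostant--Kumar Schubert varieties with $H^0(Z_\sigma,\eL)^*\cong\U(\fI_1)v_{\sigma\barla}$; a Mayer--Vietoris argument identifies $H^0(\bigcup_\sigma Z_\sigma,\eL)^*$ with $\sum_\sigma H^0(Z_\sigma,\eL)^*=N$, reducing the conjecture to $\dim H^0(\bigcup_\sigma Z_\sigma,\eL)=\dim V_\la$, i.e. to constancy of $h^0$ along the family.

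The step I expect to be the genuine obstacle is precisely this dimension identity in the non-Demazure case, equivalently the flatness of the degeneration just described (together with $H^1$-vanishing on the special fibre, which should follow from Frobenius splitting and the Schubert-like nature of the $Z_\sigma$). When $\la^{(b)}=k_b\om_j$ this flatness is the Pappas--Rapoport coherence conjecture itself \cite{PR08,Zhu14}; for an arbitrary decomposition $\barla$ one needs a new ``Kostant--Kumar coherence'' identity, and all the limit manipulations above reduce the problem to it without supplying it. Accordingly I would, beyond the computer checks of Appendix~\ref{sec:app}, first confirm the conjecture for single-row and single-column $\la$, and then look for a combinatorial model --- Kottwitz--Rapoport alcoves \cite{KR00} or the lattice/cyclic-quiver dictionary of Section~\ref{sec:globalag} --- in which the inclusion--exclusion over the varieties $Z_\sigma$ can be carried out directly.
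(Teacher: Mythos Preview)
This statement is labeled a \emph{Conjecture} in the paper, and the paper does not prove it. Immediately after stating it, the paper records exactly the two observations you begin with: the easy inclusion $\sum_{\sigma}\U(\fI_1)v_{\sigma\barla}\subseteq V_\barla$ (from Lemmas~\ref{lem:Iwaction} and~\ref{lem:extrbel}) and the reduction to the numerical identity $\dim\sum_\sigma\U(\fI_1)v_{\sigma\barla}=\dim V_{\sum_b\la^{(b)}}$. Beyond that, the paper offers only computer evidence (Appendix~\ref{sec:app}) and the special case where every $\la^{(b)}$ is a multiple of a single $\om_j$ (Theorem~\ref{thm:fund}), which it derives from Zhu's theorem rather than proving directly.

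Your proposal is therefore not in conflict with the paper: you are honest that the core step---the dimension identity for general $\barla$, equivalently flatness of the geometric degeneration into a union of Kostant--Kumar--type varieties---is precisely what is missing, and you do not claim to supply it. Your geometric reformulation (degeneration of $SL_n/B$, Mayer--Vietoris, reduction to constancy of $h^0$) is in fact the viewpoint the paper itself gestures at: the introduction says explicitly that Conjecture~\ref{conj:intro} ``has to do with the flatness of this degeneration,'' and Section~\ref{subsec:Demgen} and Corollary~\ref{cor:KK} spell out the Kostant--Kumar description you invoke. So your strategy is well aligned with the paper's own expectations, but it remains a strategy: the ``Kostant--Kumar coherence'' identity you isolate is exactly the open problem, and neither the PBW/FFL monomial matching nor the alcove combinatorics you suggest is carried out here. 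In short, there is a genuine gap, you have correctly located it, and the paper does not close it either.
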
	

Lemmas \ref{lem:Iwaction} and \ref{lem:extrbel} imply that 
$V_\barla \supset \sum_{\sigma\in W} \U(\fI_1) v_{\sigma\barla}$. 
Since $\dim V_\barla = \dim V_{\sum_b \la^{(b)}}$, 
Conjecture \ref{conj:main} is equivalent to the equality 
\begin{equation}\label{eq:numconj}
\dim \sum_{\sigma\in W} \U(\fI_1) v_{\sigma\barla} = \dim V_{\sum_b \la^{(b)}}.
\end{equation}

\begin{rem}
The computer experiments (see Appendix \ref{sec:app})  support the numerical 
conjecture \eqref{eq:numconj}.
\end{rem}

\begin{thm}\label{thm:fund}
Let us fix $k=1,\dots,n-1$ and assume that all weights $\la^{(b)}$ are proportional 
to $\omega_k$. Then Conjecture \ref{conj:main} holds true and each subspace 
$\U(\fI_1) v_{\sigma\barla}$ is isomorphic to an affine Demazure module. 
\end{thm}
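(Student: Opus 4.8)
The plan is to prove the second assertion of the theorem — that each $\U(\fI_1)v_{\sigma\barla}$ is isomorphic to an affine Demazure module — and then to deduce the first (equivalently, the numerical statement~\eqref{eq:numconj}) from it together with Zhu's coherence theorem. What is special to a single cominuscule fundamental weight, and absent in general, is that $V_{k_b\om_k}$ and every module built from it carry explicit standard-monomial bases coming from the Plücker coordinates on $\Gr(k,n)$, and the affine Demazure modules that occur are correspondingly ``thin''; this is what lets one bypass the general Conjecture~\ref{conj:intro} here.

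First I would identify the building blocks. Each fundamental $\fI_1$-module $V_{\om_k}^{(b)}\cong\Lambda^k(\bC^n)$, pulled back along $\fI\twoheadrightarrow\fI_1$, is a finite-dimensional affine Demazure module: for $b=0$ this is Example~\ref{ex:b=0}, where $\Lambda^k(\bC^n)$ carries the usual Borel action and the abelian ideal $(\fn_-)^a$ acts trivially, so it is the bottom, finite-dimensional Demazure layer of the level-one module $L(\Lambda_k)$; for the other $b$ the rotation symmetry of Lemma~\ref{lem:cycsym}, matched with the order-$n$ diagram automorphism of $\widehat{\msl}_n$ permuting the $\Lambda_b$, gives the correspondingly rotated finite-dimensional Demazure module. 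Forming the standard-monomial (Cartan) component of the $k_b$-th tensor power, I obtain an $\fI_1$-equivariant embedding of $V_{k_b\om_k}^{(b)}$ onto a finite-dimensional affine Demazure module inside an integrable module of level $k_b$.

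Tensoring these embeddings over $b$ and composing with the multiplication (Cartan projection) of integrable modules produces an $\fI$-equivariant map $\Phi\colon V_\barla\to L(\Lambda)$, where $L(\Lambda)$ is an integrable module of level $\sum_b k_b$ and $\fI$ acts on $V_\barla$ through $\fI\twoheadrightarrow\fI_1$. Since the rotations were built in, $\Phi$ carries each extremal tensor $v_{\sigma\barla}=\otimes_b v_{\sigma k_b\om_k}$ — which lies in $V_\barla$ by Lemma~\ref{lem:extrbel} — onto an extremal weight vector of $L(\Lambda)$ for a suitable element $w_\sigma$ of the (extended) affine Weyl group, one-dimensionality of extremal weight spaces removing the ambiguity. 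Hence $\Phi$ maps $\U(\fI_1)v_{\sigma\barla}$ onto the affine Demazure module $D_{w_\sigma}(\Lambda)$, and comparing this with the character upper bound for a cyclic module generated by an extremal vector (cominuscularity bounding the relevant $z$-string lengths by one) shows the surjection is an isomorphism, $\U(\fI_1)v_{\sigma\barla}\cong D_{w_\sigma}(\Lambda)$ — the second assertion. In particular $\Phi$ maps $\sum_\sigma\U(\fI_1)v_{\sigma\barla}$ onto $\sum_\sigma D_{w_\sigma}(\Lambda)$, so $\dim\sum_\sigma\U(\fI_1)v_{\sigma\barla}\ge\dim\sum_\sigma D_{w_\sigma}(\Lambda)$.

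To finish, $\sum_\sigma D_{w_\sigma}(\Lambda)$ is the annihilator of the intersection of the Schubert ideals in $L(\Lambda)$, hence equals $H^0\bigl(\bigcup_\sigma Y_{w_\sigma},\eL(\Lambda)\bigr)^*$ by projective normality of Schubert varieties; the $w_\sigma$ form, up to translating between connected components, the admissible set attached to $\om_k^\vee$, so $\bigcup_\sigma Y_{w_\sigma}=\mathcal A(\om_k^\vee)$, and Zhu's coherence theorem gives $\dim\sum_\sigma D_{w_\sigma}(\Lambda)=\dim H^0(X_{\om_k^\vee},\eO^{\T\Lambda(K)})=\dim H^0(\Gr(k,n),\eO^{\T\Lambda(K)})=\dim V_{\sum_b\la^{(b)}}=\dim V_\barla$, using $X_{\om_k^\vee}\cong\Gr(k,n)$ and $\Lambda(K)=\sum_b k_b$. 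Together with the inclusion $\sum_\sigma\U(\fI_1)v_{\sigma\barla}\subseteq V_\barla$ coming from Lemmas~\ref{lem:Iwaction} and~\ref{lem:extrbel}, this forces equality, which is~\eqref{eq:numconj} and hence Conjecture~\ref{conj:main}. The main obstacle is the book-keeping in this last step: pinning down $\Lambda$, the $w_\sigma$, and the components precisely enough that $\bigcup_\sigma Y_{w_\sigma}$ really is $\mathcal A(\om_k^\vee)$ — equivalently, matching the algebraic $\veps\to0$ degeneration with the geometric special-fibre degeneration of the global affine Grassmannian so that Zhu's theorem applies. If one prefers to avoid the geometry, the same count is a weight-preserving bijection between a ``cylindric'' standard-monomial basis of $\sum_\sigma D_{w_\sigma}(\Lambda)$ and the semistandard $\Gr(k,n)$-monomials of degree $\Lambda(K)$, and it is exactly here that the hypothesis of a single cominuscule fundamental weight is used.
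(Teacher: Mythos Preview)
Your overall strategy matches the paper's: reduce to Zhu's coherence theorem via an $\fI$-equivariant map carrying the extremal tensors $v_{\sigma\barla}$ to extremal weight vectors of an integrable module, then squeeze $\sum_\sigma\U(\fI_1)v_{\sigma\barla}\subseteq V_\barla$ between two equal dimensions. The paper does not argue Theorem~\ref{thm:fund} directly but defers to the more general Theorem~\ref{thm:cohconj}, where the requisite map comes from the global Schubert variety over~$\bA^1$ (Section~\ref{sec:globalag}): the algebraic limit $\bfD(0)$ is identified with dual sections on the special fibre, which is already the union $\mathcal A(\om_k^\vee)$ of Schubert varieties in $\aFl$, so the bookkeeping you flag as ``the main obstacle'' is absorbed into the geometry.

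Your alternative --- building $\Phi$ factor by factor, embedding each $V_{k_b\om_k}^{(b)}$ as a Demazure module via the rotation of Lemma~\ref{lem:cycsym} matched with the affine diagram automorphism, then Cartan-projecting --- is a genuinely different route to the same map. It is plausible, but you have not actually verified that the $\fI_1$-action on $V_{k_b\om_k}$ through the $b$-th quiver projection agrees (after the diagram twist) with the $\fI$-action on a specific Demazure submodule of some $L(k_b\Lambda_{?})$, nor pinned down $\Lambda$ and the $w_\sigma$ so that $\{w_\sigma:\sigma\in W\}$ really is the admissible set for $\om_k^\vee$. That is exactly the content the paper extracts from the lattice model of the global Grassmannian.

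One redundancy: the ``character upper bound'' step proving $\U(\fI_1)v_{\sigma\barla}\cong D_{w_\sigma}(\Lambda)$ \emph{before} invoking Zhu is unnecessary and its justification (``cominuscularity bounding $z$-string lengths'') is vague. Once you have surjectivity of $\Phi$ onto $\sum_\sigma D_{w_\sigma}(\Lambda)$ and Zhu's dimension equality, $\Phi|_{V_\barla}$ is forced to be an isomorphism, and the individual cyclic pieces are then automatically Demazure modules. You can drop that paragraph entirely.
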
	
\begin{proof}
The claim follows from the Zhu theorem \cite{Zhu14}, proving the Pappas--Rapoport coherence conjecture.
We provide the details in a more general case of Demazure modules, see Theorem \ref{thm:cohconj}.
\end{proof}

\begin{cor}
The $\fI_1$ action on the space $V_\barla$ agrees with the Iwahori algebra 
action on the corresponding sum of affine Demazure modules (see Corollary 
\ref{cor:Dprop} for more details).
\end{cor}

\subsection{Embeddings}
Let $\la^{(b)}=\sum_{i=1}^{n-1} m^{(b)}_i\om_i$, $\barla=(\la^{(b)})_b$. We consider the 
collections $\barla(i)$, $i=1,\dots,n-1$, formed by weights which are 
multiples of $\om_i$. More precisely, 
\[
\barla(i) = (m^{(0)}_i\om_i,m^{(1)}_i\om_i,\dots, m^{(n-1)}_i\om_i).
\]
Then Theorem \ref{thm:fund} tells us that $V_{\barla(i)}$ is generated from the  extremal weight vectors.  

\begin{prop}\label{prop:KK}
Let $\barla$ be a collection such that Conjecture \ref{conj:main} holds true. Then
$V_{\barla}$ is embedded into the tensor product $\bigotimes_{i=1}^{n-1} V_{\barla(i)}$ 
and the image of the embedding is generated by the $\fI$ action from the vectors
$\bigotimes_{i=1}^{n-1} v_{\sigma\barla(i)}$ for all $\sigma\in S_n$.
\end{prop}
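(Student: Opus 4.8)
The plan is to realise $V_{\barla}$ inside $\bigotimes_i V_{\barla(i)}$ by iterating the Cartan‑component construction, and then track how the extremal generators transform. First I would recall that by definition $V_{\la^{(b)}}$ is the Cartan component inside $\bigotimes_{i=1}^{n-1} V_{m^{(b)}_i\om_i}$, i.e. $V_{\la^{(b)}}=\U(\msl_n).\otimes_i v_{m^{(b)}_i\om_i}$, with the cyclic (lowest weight) vector $v_{\la^{(b)}}$ mapping to $\otimes_i v_{m^{(b)}_i\om_i}$. Taking the tensor product over $b\in\Delta_0$ and rearranging factors gives a $\bigoplus_b\mgl_n$–equivariant embedding
\[
\bigotimes_{b\in\Delta_0} V_{\la^{(b)}} \hookrightarrow \bigotimes_{b\in\Delta_0}\bigotimes_{i=1}^{n-1} V_{m^{(b)}_i\om_i} \;\simeq\; \bigotimes_{i=1}^{n-1}\Bigl(\bigotimes_{b\in\Delta_0} V_{m^{(b)}_i\om_i}\Bigr),
\]
which is equivariant for the diagonal $\mgl_n$ inside each factor, hence for $\fI_1(\veps)\subset\bigoplus_b\mgl_n$ for every $\veps$. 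The key point is that this embedding is $\fI_1(\veps)$‑equivariant, so it restricts to an embedding $V_{\barla}(\veps)=\U(\fI_1(\veps)).\otimes_b v_{\la^{(b)}}\hookrightarrow \bigotimes_i V_{\barla(i)}(\veps)$, the latter being the subspace generated by $\fI_1(\veps)$ from $\bigotimes_i\otimes_b v_{m^{(b)}_i\om_i}$, because the tensor product of cyclic vectors on the left maps exactly to the tensor product over $i$ of the cyclic vectors $\otimes_b v_{m^{(b)}_i\om_i}$ of $V_{\barla(i)}(\veps)$.

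Next I would pass to the $\veps\to 0$ limit. The decreasing $\veps$‑adic filtration on the ambient space $\bigotimes_b V_{\la^{(b)}}[\veps]$ is compatible with the one on $\bigotimes_i\bigl(\bigotimes_b V_{m^{(b)}_i\om_i}\bigr)[\veps]$ under the above (degree‑zero in $\veps$) embedding, and the limit $\bfD(0)$ construction of Section~\ref{sec:construction} is functorial for such filtered equivariant embeddings. Hence $V_{\barla}=V_{\barla}(0)$ embeds into $\lim_{\veps\to 0}\bigotimes_i V_{\barla(i)}(\veps)$. Here I would want to identify this limit with $\bigotimes_i V_{\barla(i)}$: this requires that the associated graded of the $\veps$‑filtration on the tensor product $\bigotimes_i V_{\barla(i)}(\veps)$ equals the image of $\bigotimes_i V_{\barla(i)}(0)$, which holds precisely because each $V_{\barla(i)}$ is, by Theorem~\ref{thm:fund}, a sum of affine Demazure modules and such modules behave well under tensor product degeneration (the cyclic vectors and the $\fI_1$‑action degenerate term by term). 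Combining, $V_{\barla}\hookrightarrow\bigotimes_{i=1}^{n-1}V_{\barla(i)}$ as $\fI_1$‑modules, with the tensor product of lowest‑weight vectors $\otimes_b v_{\la^{(b)}}$ going to $\bigotimes_i\bigl(\otimes_b v_{m^{(b)}_i\om_i}\bigr)=\bigotimes_i v_{w_0\barla(i)}$.

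It remains to show the image is $\U(\fI_1)$‑generated by $\bigotimes_i v_{\sigma\barla(i)}$, $\sigma\in S_n$. For the inclusion "$\supseteq$": by Lemma~\ref{lem:extrbel} each $v_{\sigma\barla(i)}\in V_{\barla(i)}$, and under the embedding $V_{\barla}\hookrightarrow\bigotimes_i V_{\barla(i)}$ the weight space of $\bigotimes_b\sigma\la^{(b)}=\sum_i\bigl(\sum_b\sigma m^{(b)}_i\om_i\bigr)$ is one‑dimensional and contains $v_{\sigma\barla}\in V_{\barla}$ (again Lemma~\ref{lem:extrbel}, applied to $\barla$, valid since Conjecture~\ref{conj:main} holds for $\barla$), which must therefore coincide up to scalar with $\bigotimes_i v_{\sigma\barla(i)}$; hence $\bigotimes_i v_{\sigma\barla(i)}\in V_{\barla}$ and so $\sum_\sigma\U(\fI_1)v_{\sigma\barla}\subseteq V_{\barla}$ lies in the span. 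For "$\subseteq$": since Conjecture~\ref{conj:main} holds for $\barla$, $V_{\barla}=\sum_{\sigma}\U(\fI_1)v_{\sigma\barla}$, and under the embedding $v_{\sigma\barla}\mapsto\bigotimes_i v_{\sigma\barla(i)}$, so the whole image is generated by these vectors. \textbf{The main obstacle} I anticipate is the middle step — verifying that $\lim_{\veps\to 0}\bigotimes_i V_{\barla(i)}(\veps)$ really is $\bigotimes_i V_{\barla(i)}$ (equivalently, that forming the Cartan‑type $\veps$‑degeneration commutes with the outer tensor product over $i$); this flatness‑type statement is where the Demazure structure from Theorem~\ref{thm:fund} has to be used seriously, rather than merely quoted, and it is plausibly the technical heart of the argument.
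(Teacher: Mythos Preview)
Your first and last paragraphs already contain the complete proof, and they match the paper's argument. The middle paragraph --- the passage through $V_{\barla(i)}(\veps)$ and the limit $\lim_{\veps\to 0}\bigotimes_i V_{\barla(i)}(\veps)$ --- is an unnecessary detour, and the ``main obstacle'' you flag is entirely self-imposed.

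Here is the point you are missing. The ambient embedding
\[
\bigotimes_{b\in\Delta_0} V_{\la^{(b)}}\;\hookrightarrow\;\bigotimes_{i=1}^{n-1}\Bigl(\bigotimes_{b\in\Delta_0} V_{m^{(b)}_i\om_i}\Bigr)
\]
that you write down in your first paragraph is $\bigoplus_b\mgl_n$-equivariant, hence $\fI_1$-equivariant (at $\veps=0$, not just for $\veps\ne 0$). Now $V_{\barla}$ is by definition a subspace of the left-hand ambient space, so it maps into the right-hand ambient space with no limit needed. The only question is whether the image lands in the smaller subspace $\bigotimes_i V_{\barla(i)}$. But this follows immediately from the hypothesis: by Conjecture~\ref{conj:main}, $V_{\barla}=\sum_\sigma\U(\fI_1)v_{\sigma\barla}$; under the ambient embedding $v_{\sigma\barla}\mapsto\bigotimes_i v_{\sigma\barla(i)}$; each $v_{\sigma\barla(i)}\in V_{\barla(i)}$ by Lemma~\ref{lem:extrbel}; and $\bigotimes_i V_{\barla(i)}$ is $\fI_1$-stable by Lemma~\ref{lem:Iwaction}. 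Done. This is exactly your last paragraph, and it is also exactly the paper's (three-line) proof.

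So there is no need to prove that forming $\bfD(0)$ commutes with the outer tensor product over $i$, and Theorem~\ref{thm:fund} is not used in the paper's proof of this proposition at all --- it is invoked only afterwards, in Corollary~\ref{cor:KK}, to identify each $V_{\barla(i)}$ with a sum of Demazure modules. Drop the middle paragraph and you have the paper's proof verbatim.
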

\begin{proof}
We consider the chain of embeddings
\[
V_{\barla} \hookrightarrow \bigotimes_{b=0}^{n-1} V_{\la^{(b)}} \hookrightarrow
\bigotimes_{b=0}^{n-1} \bigotimes_{i=1}^{n-1} V_{m^{(b)}_i\om_i} \simeq 
\bigotimes_{i=1}^{n-1} \left(\bigotimes_{b=0}^{n-1}  V_{m^{(b)}_i\om_i}\right),
\]
where the second embedding is obtained by taking the Cartan components.
Then $V_{\barla(i)}$ sits inside $\bigotimes_{b=0}^{n-1}  V_{m^{(b)}_i\om_i}$
and the extremal weight vectors $v_{\sigma\barla}$ are the tensor products of the 
extremal weight vectors $v_{\sigma\barla^{(i)}}$.
\end{proof}

\begin{cor}\label{cor:KK}
Let $\barla$ be a collection such that Conjecture \ref{conj:main} holds true.
Then $V_{\barla}$ is isomorphic as an $\fI$ module to a sum of affine Kostant--Kumar
modules inside a tensor product of integrable highest weight representations. 
\end{cor}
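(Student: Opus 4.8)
The plan is to combine Proposition~\ref{prop:KK} with the already-established Theorem~\ref{thm:fund}. By Proposition~\ref{prop:KK}, under the hypothesis that Conjecture~\ref{conj:main} holds for $\barla$, the module $V_{\barla}$ embeds into $\bigotimes_{i=1}^{n-1} V_{\barla(i)}$ with image equal to $\sum_{\sigma\in S_n}\U(\fI)\cdot\bigotimes_{i=1}^{n-1} v_{\sigma\barla(i)}$. The point is that each factor $V_{\barla(i)}$ is a module of the type covered by Theorem~\ref{thm:fund}, since $\barla(i)$ is a collection of multiples of the single fundamental weight $\om_i$. Hence Conjecture~\ref{conj:main} holds for each $\barla(i)$, and moreover each summand $\U(\fI_1) v_{\sigma\barla(i)}\subset V_{\barla(i)}$ is an affine Demazure module inside some integrable highest weight representation $L(\Lambda^{(i)})$, where the affine weight $\Lambda^{(i)}$ is read off from the coefficients $m^{(b)}_i$ as $\sum_b m^{(b)}_i\Lambda_b$ (so that the finite part recovers $\sum_b m^{(b)}_i\om_i$), and the affine Weyl group element is the one attached to $\sigma$ via the admissible-set parametrization.

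First I would make the identification of the individual factors precise: fix $i$, and identify $V_{\barla(i)}$ with the Iwahori module $\bfD(0)$ produced by the construction of Section~\ref{sec:construction} applied to the collection of spherical Demazure modules $D_b = V_{m^{(b)}_i\om_i}$ (trivial $z$-action). Theorem~\ref{thm:fund} (whose proof is deferred to Theorem~\ref{thm:cohconj} in the general Demazure setting) then tells us $V_{\barla(i)}=\bigoplus_{\sigma\in W}$-indexed sum of the Demazure submodules $D_{\sigma}(\Lambda^{(i)})$ of $L(\Lambda^{(i)})$ generated by the extremal vectors $v_{\sigma\barla(i)}$; here the relevant element of $W^a$ (resp. the admissible set $\mathcal{A}_{\la^{(b)}}$) is the one corresponding to $z^{w\la}$ for the appropriate $w\in W$, exactly as in the statement of the coherence conjecture in Section~\ref{sec:cohconj}. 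Second, I would feed this into Proposition~\ref{prop:KK}: the image of $V_{\barla}$ inside $\bigotimes_i V_{\barla(i)}$ is generated by the $\fI$-action from the vectors $\bigotimes_i v_{\sigma\barla(i)}$, and each such vector is a tensor product of extremal weight vectors $v_{\sigma(\Lambda^{(i)})}$ in the integrable modules $L(\Lambda^{(i)})$. By the definition of the affine Kostant-Kumar module recalled in the introduction, $K(\bar\sigma,\bar\Lambda):=\U(\fI)\cdot\bigotimes_i v_{\sigma(\Lambda^{(i)})}\subset\bigotimes_i L(\Lambda^{(i)})$ is precisely an affine Kostant-Kumar module, where $\bar\Lambda=(\Lambda^{(i)})_{i=1}^{n-1}$ and $\bar\sigma$ is the constant tuple $(\sigma,\dots,\sigma)$ (or its admissible-set translate $z^{w\la}$ in each slot). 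Therefore
\[
V_{\barla}\;\simeq\;\sum_{\sigma\in W} K(\bar\sigma,\bar\Lambda)
\]
as $\fI$-modules, a sum of $|W|$ affine Kostant-Kumar modules inside the tensor product $\bigotimes_{i=1}^{n-1} L(\Lambda^{(i)})$ of integrable highest weight representations, which is exactly the assertion of Corollary~\ref{cor:KK}.

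The main obstacle I anticipate is bookkeeping rather than conceptual: one must check that the $\fI$-action induced from the embedding of Proposition~\ref{prop:KK} agrees, on each $\sigma$-summand, with the Iwahori action coming from the ambient tensor product of integrable modules, i.e. that the Cartan-component embeddings $V_{\la^{(b)}}\hookrightarrow\bigotimes_i V_{m^{(b)}_i\om_i}$ and the subsequent regrouping are compatible with the $\fI_1$-module structures (equivalently, with the $\veps\to 0$ degeneration procedure). This should follow from functoriality of the construction $\bfD(0)$ in Section~\ref{sec:construction} with respect to tensor-product embeddings, since taking Cartan components and forming $\U(\fI).(\otimes v_b)$ both commute with the $\veps$-deformation, but it is the step that requires care. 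A secondary, purely notational point is to pin down, for each $i$, which element of the extended affine Weyl group (or which member of the admissible set $\mathcal{A}_{\la^{(b)}}$) corresponds to a given $\sigma\in W$; this is dictated by the dictionary of Section~\ref{sec:cohconj} between $X_\la$-type Schubert varieties and the union $\mathcal{A}(\la)=\bigcup_{w\in W} Y_{z^{w\la}}$, so no genuinely new input is needed, only consistency with the conventions already fixed.
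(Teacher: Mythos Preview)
Your proposal is correct and follows essentially the same route as the paper: invoke Proposition~\ref{prop:KK} for the embedding, use Theorem~\ref{thm:fund} to identify each $V_{\barla(i)}$ with a sum of Demazure modules inside $L(\sum_b m_i^{(b)}\Lambda_b)$, and then recognize the $\fI$-span of each $\bigotimes_i v_{\sigma\barla(i)}$ as a Kostant-Kumar module. The only point where the paper is sharper is the one you yourself flag as bookkeeping: the tuple of affine Weyl group elements is not the constant tuple $(\sigma,\dots,\sigma)$ but rather $\overline w(\sigma)=(z^{\sigma\om_1},\dots,z^{\sigma\om_{n-1}})$, varying with $i$, paired with $\overline\Lambda=(\sum_b m_1^{(b)}\Lambda_b,\dots,\sum_b m_{n-1}^{(b)}\Lambda_b)$; the paper also does not dwell on the compatibility of $\fI$-actions you worry about, treating it as immediate from the construction.
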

\begin{proof}
We recall the definition of the Kostant--Kumar modules in the affine settings \cite{Kum88,Kum89,KRV24}. Let 
$\overline w=(w_1,\dots,w_r)$ be a collection of affine Weyl group elements and let 
$\overline \Lambda=(\Lambda^{(1)},\dots,\Lambda^{(r)})$ be a collection of affine dominant
integral weights. Then the Kostant--Kumar module $K(\overline w, \overline \Lambda)$ is a
cyclic $\fI$ module defined by
\[
K(\overline w; \overline \Lambda) = \U(\fI). \bigotimes_{i=1}^r v_{w_i}(\Lambda^{(i)})\subset 
\bigotimes_{i=1}^r D_{w_i}(\Lambda^{(i)}), 
\] 
where $v_w(\Lambda)\in L(\Lambda)$ is the extremal vector of weight $w(\Lambda)$.
Recall the embedding $V_{\barla}\subset \bigotimes_{i=1}^{n-1} V_{\barla(i)}$ above.
We know that each $\fI$ module $V_{\barla(i)}$ sits inside the representation $L(\sum m_i^{(b)}\Lambda_b)$ as a sum of Demazure modules 
\[
V_{\barla(i)} \simeq \sum_{\sigma\in W} D_{z^{\sigma\om_i}}(\sum_b m_i^{(b)}\Lambda_b).
\]
Hence Proposition \ref{prop:KK} implies that $V_{\barla}$ is isomorphic to the sum over 
all $\sigma\in W$ of Kostant--Kumar
modules $K(\overline w(\sigma); \overline \Lambda(\sigma))$ with $r=n-1$, where
\[
\overline w(\sigma)= (z^{\sigma\om_1},\dots,z^{\sigma\om_{n-1}}),\
\overline \Lambda(\sigma) = (\sum_b m_1^{(b)}\Lambda_b,\dots,\sum_b m_{n-1}^{(b)}\Lambda_b).
\] 
\end{proof}

\begin{rem}
Recall \cite{Go01,He13,Pa18} that in the classical situation (with all weights $\la^{(b)}$ being multiples of one fundamental weight $\omega$) the degeneration from $V_{k\omega}$ to $V_\barla$ can be understood
geometrically via the global Schubert varieties, providing a degeneration of the spherical Schubert 
varieties inside affine Grassmannians to the union of the Schubert varieties inside flag varieties.
This degeneration can be described in terms of quiver Grassmannians (see \cite{FLP23,FLP24}). 
Hence it is natural to expect that the more general case we consider in this
paper (mixing different fundamental weights) has to do with the quiver flag varieties.
More precisely, we expect that the general $\fI$ modules $V_\barla$ can be realized as spaces of
sections of natural line bundles on certain subvarieties of the quiver flag varieties. The subvarieties
in question are the Iwahori group closures of the products of the torus fixed points contained
in the quiver flag varieties.   
\end{rem}

\section{Demazure modules} \label{sec:Dem}
In this section we study the general construction of the modules $\bfD(0)$ for the case
of spherical affine Demazure modules, generalizing the discussion from the previous section. 

\begin{lem}\label{lem:badm}
Let $D$ be a $\msl_n(\bO)$ stable affine Demazure module. Then $D$ is $b$-admissible 
for any $b$. 
\end{lem}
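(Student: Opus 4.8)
The plan is to realize the endomorphism $\mathrm{sh}_b$ of $\fn(\bO)$ from \eqref{eq:sh} as the adjoint action of an element of the (pro\nobreakdash-)torus of $\msl_n(\bO)$ and then to produce $f$ as the associated operator on $D$. Write $\bar\om_b^\vee\in\fh$ for the traceless $b$-th fundamental coweight, i.e. the diagonal matrix whose first $b$ entries equal $1-\tfrac bn$ and whose remaining entries equal $-\tfrac bn$. Fix a value $\log\veps\in\bC$, put $\log(z+\veps):=\log\veps+\log(1+z/\veps)\in\bC[[z]]$ (so $\exp\log(z+\veps)=z+\veps$ and $\log(1+z/\veps)\in z\bC[[z]]$), and set
\[
Y_\veps:=\log(z+\veps)\cdot\bar\om_b^\vee=(\log\veps)\,\bar\om_b^\vee+\log(1+z/\veps)\,\bar\om_b^\vee\ \in\ \fh\T\bC[[z]]\subset\msl_n(\bO).
\]
For $i<j$ one has $[\bar\om_b^\vee,E_{i,j}]=E_{i,j}$ if $i\le b<j$ and $=0$ otherwise, so $\mathrm{ad}\,Y_\veps$ acts on $E_{i,j}z^k$ by multiplication by $\log(z+\veps)$ in the first case and by $0$ in the second; exponentiating, $\exp(\mathrm{ad}\,Y_\veps)$ sends $E_{i,j}z^k$ to $(z+\veps)E_{i,j}z^k=\veps E_{i,j}z^k+E_{i,j}z^{k+1}$, resp.\ to $E_{i,j}z^k$, which is exactly $\mathrm{sh}_b(E_{i,j}z^k)$ by \eqref{eq:sh}. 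Hence $\mathrm{sh}_b=\mathrm{Ad}(\exp Y_\veps)|_{\fn(\bO)}$.

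Next I would define $\Theta_\veps:=\exp(\rho(Y_\veps))$ as an operator on $D$, where $\rho$ denotes the $\msl_n(\bO)$-action. Since $D$ is a graded finite-dimensional $\msl_n(\bO)$-module (it is a submodule of an integrable highest-weight module), $\rho(z^N\msl_n[z])=0$ for some $N$, so $Y_\veps$ acts through a finite-dimensional quotient Lie algebra, and $\rho(Y_\veps)$ splits as the sum of the commuting operators $\rho((\log\veps)\bar\om_b^\vee)$ (semisimple: acting on the $\nu$-weight space of $D$ by $\nu(\bar\om_b^\vee)\log\veps$) and $\rho(\log(1+z/\veps)\bar\om_b^\vee)$ (nilpotent: it strictly raises the grading); thus $\Theta_\veps\in\mathrm{GL}(D)$ is well defined. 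Because $\rho$ is a Lie algebra homomorphism, $\Theta_\veps\,\rho(x)\,\Theta_\veps^{-1}=\rho\bigl(\exp(\mathrm{ad}\,Y_\veps)(x)\bigr)$ for every $x\in\msl_n(\bO)$ — a finite sum once one works modulo $z^N$ — and by the previous paragraph the right-hand side equals $\rho(\mathrm{sh}_b(x))$ whenever $x\in\fn(\bO)$.

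It remains to check that the cyclic vector $v$ of $D$ is an eigenvector of $\Theta_\veps$. Recall that a spherical affine Demazure module conforms to the setup preceding Definition \ref{def:badm}: its cyclic vector is an $\fh$-weight extremal vector killed by $\fn_-[z]$ and $z\fh[z]$, so $hv=\la(h)v$ for $h\in\fh$ and $z\fh[z]v=0$. Hence $\rho((\log\veps)\bar\om_b^\vee)v=\la(\bar\om_b^\vee)\log\veps\cdot v$ and $\rho(\log(1+z/\veps)\bar\om_b^\vee)v=0$, whence $\Theta_\veps v=c_\veps v$ with $c_\veps:=e^{\la(\bar\om_b^\vee)\log\veps}\neq 0$. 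Setting $f:=c_\veps^{-1}\Theta_\veps$ then gives a linear automorphism of $D$ with $f(v)=v$ and $f\circ x=\mathrm{sh}_b(x)\circ f$ for all $x\in\fn(\bO)$; $f$ preserves $D=\U(\fn(\bO))v$ because $\mathrm{sh}_b$ is a bijection of $\fn(\bO)$ (as $z+\veps$ is a unit in $\bO$). This is precisely $b$-admissibility. (Equivalently, $\Theta_\veps$ is the action on the ambient integrable module of the element $\exp(Y_\veps)\in SL_n(\bO)$, diagonal with entries $(z+\veps)^{1-b/n}$ in the first $b$ slots and $(z+\veps)^{-b/n}$ elsewhere.) The only delicate point is justifying the operator identity $\Theta_\veps\rho(x)\Theta_\veps^{-1}=\rho(\mathrm{sh}_b(x))$, which is handled by the reduction to the finite-dimensional quotient together with the semisimple-plus-nilpotent splitting of $\rho(Y_\veps)$; the argument is uniform and, taking $z$ to act by zero (so $Y_\veps=(\log\veps)\bar\om_b^\vee$), specializes to the case of irreducible finite-dimensional $\msl_n$-modules used in Section~\ref{sec:fin-dim}.
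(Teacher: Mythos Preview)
Your proof is correct and takes a genuinely different route from the paper's. The paper extends $\mathrm{sh}_b$ to a $\bO$-linear automorphism of the Iwahori algebra (by $f_b\mapsto f_b(z+\veps)^{-1}$, $f_a\mapsto f_a$ for $a\ne b$), then invokes the Joseph--Mathieu presentation of spherical Demazure modules: the defining relations $\fn_-(\bO)v=0$, $z\fh(\bO)v=0$, $(e_\al z^k)^{N_{\al,k}}v=0$ are visibly preserved by $\mathrm{sh}_b$, so the twisted module is isomorphic to $D$. You instead realize $\mathrm{sh}_b$ as $\mathrm{Ad}(\exp Y_\veps)$ for the explicit Cartan element $Y_\veps=\log(z+\veps)\,\bar\om_b^\vee\in\fh(\bO)$ and build $f$ directly as the (rescaled) exponential of $\rho(Y_\veps)$ on $D$. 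Your argument is more elementary in that it avoids the presentation theorem for Demazure modules entirely; in fact it proves the stronger statement that \emph{any} graded finite-dimensional cyclic $\msl_n(\bO)$-module satisfying the three bullet conditions preceding Definition~\ref{def:badm} is $b$-admissible for every $b$. The paper's approach, on the other hand, makes transparent which structural feature of Demazure modules is responsible (the shape of the relations) and would adapt readily to other modules once their presentations are known. One small redundancy: your last sentence about $f$ preserving $D$ is unnecessary, since $\Theta_\veps$ is already an automorphism of $D$ by construction.
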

\begin{proof}
Let $v\in D$ be the lowest weight cyclic vector, so the current algebra $\fn(\bO)$ 
(even $\fn[z]$) generates $D$
from $v$. Recall that the $\bO$ linear map $\mathrm{sh}_b$ preserves  $e_az^k$ for 
$a\ne b$ and sends $e_bz^k$ to $e_bz^k(z+\veps)$. We extend $\mathrm{sh}_b$ to the
$\bO$ linear automorphism of the Iwahori algebra by setting 
$\mathrm{sh}_b f_b = f_b(z+\veps)^{-1}$ and $\mathrm{sh}_b f_a = f_a$ for $a\ne b$ 
(recall that $\veps\ne 0$). Since the operators $e_az^k$, $a\ne b$ and $e_bz^k(z+\veps)$ 
generate $D$ from $v$ ($D$ is graded and finite-dimensional, hence there exists an $N$ 
such that all elements of the form $xz^N$ act trivially), it suffices to check that
all the defining relations of $D$ as a $\fI$ module still hold true after the twist by
$\mathrm{sh}_b$.
Recall (see \cite{J85,M88}) that the defining relation of $D$ are of the form 
$\fn_-(\bO)v=0$, $z\fh(\bO)v=0$ and $(e_\al z^k)^{N_\al,k}v=0$ for positive roots 
$\al$, $k\ge 0$ and certain numbers $N_\al,k$ 
(with an assumption that $v$ has a prescribed weight).
One easily sees that the $\fI$ automorphisms  $\mathrm{sh}_b$ preserve these relations.
\end{proof}

Let $D_b$, $b=0,\dots,n-1$ be $\msl_n$-invariant affine Demazure modules. Let us fix
the lowest weight vectors $v_b\in D_b$.
The subspace generated by the $\msl_n$ action on $v_b$ is isomorphic to the highest weight
module $V_{\la^{(b)}}$. For a Weyl group element $\sigma\in S_n$ let 
$v_{\sigma\la^{(b)}}\in V_{\la^{(b)}}\subset D_b$ be the extremal vector of the  weight $\sigma\la^{(b)}$.

\begin{cor}\label{cor:D(0)Dem}
For any $\sigma\in S_n$ the space $\bfD(0)$ contains the subspace 
$\U(\fI).\bigotimes_{b\in \Delta_0}v_{\sigma\la^{(b)}}$.
\end{cor}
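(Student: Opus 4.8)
The plan is to mimic, at the level of the Demazure modules $D_b$, the argument used for the finite-dimensional irreducibles in Lemma~\ref{lem:extrbel}, now upgraded to the affine Iwahori setting. First I would observe that for every nonzero $\veps$ the extremal vector $v_{\sigma\la^{(b)}}\in V_{\la^{(b)}}\subset D_b$ is obtained from the lowest weight cyclic vector $v_b$ by a succession of applications of root operators $e_\al z^0=e_\al$ (elements of $\fn\subset\fn(\bO)$), since passing from $v_b$ to any extremal weight vector of the \emph{finite} submodule $V_{\la^{(b)}}$ uses only the finite Chevalley generators $e_i$. These operators lie in the image of $\fa(\veps)$ inside $\bigoplus_b\mgl_n(\bO)$: indeed, by the remark following Definition~\ref{def:badm}, $\fa(\veps)$ contains the whole upper-triangular subalgebra of elements $(x,\mathrm{sh}_1(x),\dots,\mathrm{sh}_{n-1}(x))$ for $x\in\fn(\bO)$, and at $z=0$ these recover the diagonal embedding of $\fn$ (since $\mathrm{sh}_b$ fixes $e_az^k$ and multiplies $e_bz^k$ by $(z+\veps)$, which at $z=0$ is the nonzero scalar $\veps$, so after rescaling each factor the $z^0$ part acts diagonally by $\fn$). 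Hence the diagonal action of $\fn$ is available inside $\U(\fa(\veps))$, and applying the appropriate word in these operators to $\bigotimes_b v_b$ produces (a nonzero multiple of) $\bigotimes_b v_{\sigma\la^{(b)}}$. Therefore $\bigotimes_b v_{\sigma\la^{(b)}}\in\bfD(\veps)$ for all $\veps\ne 0$.

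Next I would pass to the $\veps\to 0$ limit. The vector $\bigotimes_b v_{\sigma\la^{(b)}}$ has a fixed weight, namely $\sum_b\sigma\la^{(b)}$, independent of $\veps$, and this weight space of $\bigotimes_b D_b$ is one-dimensional (it is spanned by that tensor of extremal vectors, since each $v_{\sigma\la^{(b)}}$ spans its weight space in $D_b$ by construction). Consequently the element of $\bfD(\veps)$ in question is, up to normalization, \emph{constant} in $\veps$, so it survives the degeneration procedure of \eqref{eq:vepsto0}: it lies in $\bfD(\veps)_0\setminus\bfD(\veps)_1$ and its image under \eqref{eq:vepsto0} with $r=0$ is exactly $\bigotimes_b v_{\sigma\la^{(b)}}$. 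Thus $\bigotimes_b v_{\sigma\la^{(b)}}\in\bfD(0)$. By Lemma~\ref{lem:D0Iw} the space $\bfD(0)$ carries an action of $\fI\simeq\fa(0)$, so it contains the $\fI$-submodule generated by this vector, which is the asserted $\U(\fI).\bigotimes_{b\in\Delta_0}v_{\sigma\la^{(b)}}$.

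The only genuinely delicate point is making precise that the rescaling used to extract the diagonal $\fn$-action from $\fa(\veps)$ does not interfere with the filtration bookkeeping — i.e. that after dividing the lower-triangular generators \eqref{eq:lowertr} by $(z+\veps)$ and working with the surviving upper-triangular part one really lands on an honest diagonal copy of $\fn$ whose orbit of $\bigotimes_b v_b$ is $\veps$-independent. But this is exactly the content of the remark after Definition~\ref{def:badm} combined with $b$-admissibility of the $D_b$ (Lemma~\ref{lem:badm}), which guarantees the twisted and untwisted actions of $\fn(\bO)$ are isomorphic via $f_b$ fixing $v_b$; tracing $\bigotimes_b v_{\sigma\la^{(b)}}$ through that isomorphism shows it corresponds to itself. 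So the proof is essentially a one-line citation of Lemma~\ref{lem:extrbel}'s argument plus the observation that extremal vectors of the finite part have $\veps$-independent weight, hence sit in filtration degree zero; no hard estimate is needed.
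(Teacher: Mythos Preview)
Your approach is essentially the paper's own: exhibit $\bigotimes_b v_{\sigma\la^{(b)}}$ inside $\bfD(\veps)$ for $\veps\ne 0$, observe that it sits in a one-dimensional weight space of the ambient $\bigotimes_b D_b$ and is therefore $\veps$-independent, hence lands in $\bfD(0)$, which is $\fI$-stable by Lemma~\ref{lem:D0Iw}. The paper phrases the first step as a character comparison (``$\bfD(0)$ is a weight subspace'' together with $\bigotimes_b v_{\sigma\la^{(b)}}\in\odot_b V_{\la^{(b)}}$) rather than via your explicit $\fn$-word argument, but the substance is the same.

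One point in your write-up should be tightened. Your justification for the one-dimensionality of the weight space --- ``since each $v_{\sigma\la^{(b)}}$ spans its weight space in $D_b$'' --- is not sufficient: one-dimensional weight spaces in each tensor factor do not by themselves force a one-dimensional weight space in the tensor product (other tuples $(\mu_b)$ with $\sum_b\mu_b=\sum_b\sigma\la^{(b)}$ could contribute). What actually pins it down is \emph{extremality}: restricting to the bottom $d$-degree, every weight $\mu_b$ of $V_{\la^{(b)}}$ satisfies $\la^{(b)}-\sigma^{-1}\mu_b\in\sum_i\bZ_{\ge 0}\al_i$, so $\sum_b\mu_b=\sigma(\sum_b\la^{(b)})$ forces each $\mu_b=\sigma\la^{(b)}$. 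The paper signals this by calling the vector ``the extremal weight vector in $V_{\sum\la^{(b)}}$''; you should make that step explicit. (Relatedly, your claim that the vector lies in filtration degree $r=0$ is not literally correct --- the $\fa(\veps)$-word producing it typically carries a power of $\veps$ --- but this is harmless once one-dimensionality is established, since the leading $\veps$-coefficient is then the desired vector regardless of which $r$ it sits at.)
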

\begin{proof}
The space $\bfD(0)$ is a weight subspace of the tensor product 
$\bigotimes_{b\in \Delta_0} D_b$.
Recall that $z\fh[z]v_b=0$, hence $z\fh[z]v_{\sigma\la^{(b)}}=0$ for any $\sigma$. 
We conclude that the $\sigma(\la^{(b)})$-weight subspace  of $D_b$ is one-dimensional. 
Clearly, $\bigotimes_{b\in \Delta_0}v_{\sigma\la^{(b)}}\in \odot_{b} V_{\la_b}$ (this
is the extremal weight vector in the $V_{\sum \la^{(b)}}$). This implies the desired result.
\end{proof}

In what follows we denote $\bigotimes_{b\in \Delta_0}v_{\sigma\la^{(b)}}$ by $d_{\sigma}$.
In particular, the vector $d_{w_0}$ is
the tensor product of the lowest weight vectors of the representations $V_{\la^{(b)}}\subset D_b$.
Our goal is to describe $\bfD(0)$ as a module over the Iwahori algebra $\fI$. 

\subsection{Multiples of a level one weight}\label{subsec:mult}
Let us fix a dominant integral weight $\la$ and a number $k\in\bZ_{\ge 0}$. Let $D_\la\subset L(\Lambda_j)$ be the  Demazure module
inside integrable level one irreducible representations of affine $\msl_n$
such that the weight of the highest weight vector of $D_\la$ is equal to $\la$.
Let $D_{k,\la}$ be the level $k$ Demazure module defined as
$D_\la^{\odot k}$. We fix a composition $\bk=(k_b)_{b=0}^{n-1}$ such that  $|\bk|=\sum_{b=0}^{n-1} k_i=k$
and consider the Demazure modules $D_b=D_{k_b,\la}$. 
The general construction above defines the space $\bfD(0)$ inside 
$\bigotimes_{b\in \Delta_0} D_{k_b,\la}$. We denote this special fiber $\bfD(0)$ by $D_{\bk,\la}$.

Let $\Lambda_\bk=\sum_{b\in\Delta_0} k_b \Lambda_b$; in  particular, the level of 
 $\Lambda_\bk$ is equal to $k$. Recall the subspaces 
 $\U(\fI).\bigotimes_{b\in \Delta_0}v_{\sigma\la^{(b)}}$ inside $D_{\bk,\la}$ 
 from Corollary \ref{cor:D(0)Dem}, where the Iwahori algebra acts via the isomorphism $\fI\simeq \fa(0)$ (see Lemma \ref{lem:aIw}); in our case $\la^{(b)}=k_b\la$.
 
\begin{prop}\label{prop:submod}
The sum over $\sigma\in W$ of Demazure submodules of $L(\Lambda_\bk)$,
corresponding to the affine Weyl group elements $z^{\sigma\la}$,
admits an $\fI$ equivariant embedding into $D_{\bk,\la}$. 
The image of the embedding is equal to the sum over $\sigma\in W$ of subspaces  $\U(\fa(0)).\bigotimes_{b\in \Delta_0}v_{\sigma\la^{(b)}}$.
\end{prop}
\begin{proof}
We need an embedding of the sum of Demazure submodules 
of $L(\Lambda_\bk)$ into the tensor product $\bigotimes_{b\in\Delta_0} D_{k_b,\la}$, which inrewines the $\fI$ and the $\fa(0)$ actions and 
sends the cyclic vector of $D_{z^{\sigma\la}}(\Lambda_\bk)$ to $\bigotimes_{b\in \Delta_0}v_{\sigma\la^{(b)}}$. Our first goal is to construct such an embedding 
for each module $D_{z^{\sigma\la}}(\Lambda_\bk)$.

Since for any two affine dominant integrable weights $\Lambda^1$ and $\Lambda^2$
and an affine Weyl group element $\tau$ the Demazure module $D_\tau(\Lambda^1+\Lambda^2)$
is the Cartan component inside   $D_\tau(\Lambda^1)\T D_\tau(\Lambda^2)$, it is enough 
to show that $D_{z^{\sigma\la}}(\Lambda_b)$ embeds into $D_\la$ for any $b\in\Delta_0$
and a level one Demazure submodule $D_\la\subset L(\Lambda_j)$. 
More precisely, we need to show that 
there exists an embedding  $D_{z^{\sigma\la}}(\Lambda_b)\subset D_\la$ sending the
cyclic vector to $v_{\sigma\la}$ and  interwining the standard $\fI$ action on 
$D_{z^{\sigma\la}}(\Lambda_b)$ and the $\fa(0)$ action on $D_\la$ coming from the projection 
of formulas from Section \ref{sec:construction} to the $b$-th component (see Lemma \ref{lem:aIw}).

Recall (see e.g. \cite{FoL06}) that for each integral $\msl_n$ weight $\mu$ there exists 
a (unique) affine level one weight $\Lambda$  such that $L(\Lambda)$ contains a Demazure
submodule $D(\mu)$, whose cyclic vector has weight $\mu$ (in particular, $D_\la=D(w_0\la)$
for the longest element $w_0\in W$). Our goal is to embed 
$D_{z^{\sigma\la}}(\Lambda_b)=D(\om_b+\sigma\la)$ into $D_\la$ sending the (cyclic) 
weight $\om_b+\sigma\la$ vector to $v_{\sigma\la}$. 

The cyclic symmetry of the Dynkin diagram of the affine Lie algebra $\widehat\msl_n$
induces an isomorphisms between the level one integrable irreducible representations 
$L(\Lambda_b)$, $b\in\Delta_0$ (of course, these are only the isomorphisms of vector spaces).
Let us examine how the action of the Iwahori algebra  transforms under the isomorphism between $\Lambda_{b+j}$ and $\Lambda_j$. The algebra $\fI$ is generated by $E_{1,2},\dots,E_{n-1,n}$
and $zE_{n,1}$. The $b$-step Dynkin diagram rotation changes the generators to 
\[
E_{1,2},\dots,E_{b-1,b},zE_{b,b+1},E_{b+1,b+2},\dots, E_{n-1,n}, E_{n,1}.
\]
Let us denote the Lie algebra generated by these elements by $\fI^{(b)}\subset \msl_n(\eO)$;
in particular, $\fI^{(0)}=\fI$.
Then $\fI^{(b)}$ is exactly the projection of $\fa(0)$ to the $b$-th component.
Since the (vector space) isomorphism $\Lambda_{b+j}\simeq \Lambda_j$ shifts the weights by $\om_b$ and, hence, sends the cyclic vector of $D(\om_b+\sigma\la)$ to $v_{\sigma\la}$, we obtain the 
isomorphism $D_{z^{\sigma\la}}(\Lambda_b)\simeq \U(\fI^{(b)})v_{\sigma\la}$.
	
In order to complete the proof of our Proposition, we need to show that the intersections 
between $D_{z^{\sigma\la}}(\Lambda_\bk)$ inside $L(\Lambda_\bk)$ agree with that of 
$\U(\fa(0)).v_{\sigma\la}$ inside $D_{\bk,\la}$. To this end, we recall (see \cite{Ka21,Kum02}) 
that an intersection of Demazure nodules is equal to the sum of (smaller) Demazure
submodules contained in all the initial ones. Now using the same approach (i.e. the
identification of subspaces in different integrable representations), one arrives at
the desired claim. 	
\end{proof}

\begin{thm}[Type $A$ coherence conjecture]\label{thm:cohconj}
There exists $\fI$ equivariant embedding 
\[
D_{\bk,\la} \hookrightarrow L(\Lambda_\bk), \ \Lambda_\bk=\sum_{b\in\Delta_0} k_b \Lambda_b.
\]
The image of the embedding is equal to the sum of Demazure submodules $D_{z^{\sigma\la}}(\Lambda_\bk)$. 
\end{thm}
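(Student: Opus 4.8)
The plan is to realize the degeneration $D_{\bk,\la}=\bfD(0)$ directly inside the integrable module $L(\Lambda_\bk)$ by following the geometric picture of the global affine Grassmannian, translated into representation-theoretic language via the standard identification of duals of spaces of sections with Demazure modules. Concretely, recall that $H^0(X_{k\la'},\eO(1))^*\cong D_{k,\la}$ for the appropriate spherical Schubert variety, and $H^0(\mathcal A(\la'),\eL(\Lambda_\bk))^*$ is (dual to) the sum of affine Demazure modules $\sum_{\sigma\in W} D_{z^{\sigma\la'}}(\Lambda_\bk)$ inside $L(\Lambda_\bk)$. The Zhu theorem (the coherence conjecture, \cite{Zhu14}) gives equality of the two dimensions, and the Hong--Yu refinement \cite{HY24} upgrades this to an isomorphism of $\fh$-modules (in fact of modules over a larger algebra). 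So the target object on the right is already understood; the work is to identify our algebraically-defined $\bfD(0)$ with it as an $\fI$-module.

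First I would set up the comparison between our $\veps$-family and the global affine Grassmannian family of Section~\ref{subsec:globalaffine}. The maps $\psi_b$ of \eqref{eq:psib} and the endomorphism algebra $\fa(\veps)$ from Lemma~\ref{lem:aIw} are exactly the algebraic shadow of that family: for $\veps\ne 0$ the fiber is the affine Grassmannian $\aGr_0$ (and $\fa(\veps)\cong\mgl_n(\bO)$), while at $\veps=0$ the fiber is $\aFl$ (and $\fa(0)\cong\fI$). Under this dictionary, the Cartan component $\odot_b D_{k_b,\la}$ on the general fiber is the space of sections over the global Schubert variety $\mathbf X_{k\la'}$, whose generic fiber is $X_{k\la'}$; hence for $\veps\ne 0$ we get $\bfD(\veps)\cong D_{k,\la}$ as an $\mgl_n(\bO)$-module (this is already implicit in the $b$-admissibility established in Lemma~\ref{lem:badm} and the isomorphism $\bfD(\veps)\cong\odot_b D_b$). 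The key point is that the line bundle restricting $\eO(k)$ along this family, when written in the pullback generators $\eL_b$ of the Picard group of $\aFl$, restricts on the special fiber to $\eL(\Lambda_\bk)$ with $\Lambda_\bk=\sum k_b\Lambda_b$ — this is precisely how the weights $k_b$ enter, matching the composition $\bk$ chosen in Section~\ref{subsec:mult}.

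Next I would construct the $\fI$-equivariant embedding $D_{\bk,\la}\hookrightarrow L(\Lambda_\bk)$ as the flat/limit version of the generic-fiber embedding $D_{k,\la}\hookrightarrow L(k\Lambda_j)$. On the generic fiber we have the Cartan/Demazure embedding into $L(k\Lambda_j)$; as $\veps\to 0$ the ambient space $L(k\Lambda_j)$ degenerates (via the same $\fa(\veps)$-action on a suitable tensor model $\bigotimes_b L(\Lambda_b)^{\T ?}$, or rather on $\bigotimes_b D_{k_b,\la}$ sitting inside $\bigotimes_b L(\Lambda_b)$) to a subspace of $L(\Lambda_\bk)$, because $L(\Lambda_\bk)$ is the Cartan component $\U(\widehat{\msl}_n).\bigotimes_b v_{\Lambda_b}^{\T k_b}$ inside $\bigotimes_b L(\Lambda_b)^{\T k_b}$ and the $\fa(0)=\fI$-action on the limit of $\bigotimes_b D_{k_b,\la}$ factors through this. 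Passing to the limit of the embedding and using that $\dim\bfD(0)=\dim\odot_b D_b=\dim D_{k,\la}$ is constant in $\veps$, the limit map is injective onto its image. Then I would identify the image: the extremal vectors $d_\sigma=\bigotimes_b v_{\sigma\la^{(b)}}$ lie in $\bfD(0)$ (by the Corollary preceding Section~\ref{subsec:mult}), they map to the affine extremal vectors $v_{z^{\sigma\la'}}(\Lambda_\bk)$ (a weight-count argument as in Lemma~\ref{lem:extrbel}: the relevant weight spaces are one-dimensional because $z\fh[z]$ kills all the relevant vectors), and $\U(\fI)$ applied to them lands in $\bfD(0)$; so the image contains $\sum_{\sigma\in W} D_{z^{\sigma\la'}}(\Lambda_\bk)$. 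Finally, a dimension count closes the loop: by Zhu's theorem \cite{Zhu14} (equivalently the Hong--Yu character identity \cite{HY24}), $\dim\sum_\sigma D_{z^{\sigma\la'}}(\Lambda_\bk)=\dim D_{k,\la}=\dim D_{\bk,\la}$, so the containment is an equality and the image is exactly the asserted sum of Demazure submodules.

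The main obstacle I anticipate is making the ``degeneration of the ambient $L$'' step rigorous at the level of modules rather than merely sections: one must exhibit a single flat family (or at least a filtration argument, as in the $\veps^r$-filtration of \eqref{eq:vepsto0}) in which both $D_{k,\la}\subset L(k\Lambda_j)$ on the generic fiber and $D_{\bk,\la}\subset L(\Lambda_\bk)$ on the special fiber sit compatibly, and check that the $\fa(\veps)$-action is the algebraic incarnation of the $\widehat{SL}_n$-action under the global-Grassmannian dictionary. The cleanest route is probably to avoid a direct construction of this family and instead argue by pure dimension comparison once the easy inclusion and the identification of extremal vectors are in place — i.e., reduce the theorem to Zhu's theorem plus Lemma~\ref{lem:extrbel}-type weight arguments — and relegate the geometric interpretation to a remark. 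The secondary subtlety is verifying that the limit of the embedding $D_{k,\la}\hookrightarrow L(k\Lambda_j)$ is well-defined and injective; here one uses constancy of $\dim\bfD(\veps)$ together with semicontinuity, exactly parallel to how $\bfD(0)$ is defined via the associated graded of the $\veps$-filtration.
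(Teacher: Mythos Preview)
Your proposal is correct and takes essentially the same approach as the paper: identify the algebraic $\veps$-family $\bfD(\veps)$ with the sections over the global Schubert variety $\mathbf{X}_\la$, so that the generic fiber is $\bigodot_b D_{k_b,\la}\cong D_{|\bk|,\la}$ and the special fiber is the sum of Demazure modules $\sum_{\sigma\in W} D_{z^{\sigma\la}}(\Lambda_\bk)\subset L(\Lambda_\bk)$, and then conclude by the dimension equality from Zhu's theorem. The paper's proof is in fact somewhat sketchier than yours on the point you flag as the main obstacle (the rigorous identification of the algebraic $\bfD(0)$ with the geometric special fiber), treating it as implicit in the lattice description of the global Grassmannian from Section~\ref{subsec:globalaffine}; your suggested fallback of reducing everything to the easy inclusion $\sum_\sigma \U(\fI)d_\sigma\subset\bfD(0)$ plus Zhu's dimension count is exactly what the paper does in its final sentence.
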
 
\begin{proof}
We use the type $A$ case of the Zhu theorem (which proves the Pappas--Rapoport coherence
conjecture) and show that the special case of our construction (as above, 
$D_b = D_{k_b,\la}$) is the algebraic side of the geometric degeneration.

Recall the embedding $D_{k_b,\la}\subset L(k_b\Lambda_j)$ of the affine Demazure 
module into the irreducible integrable $\widehat{\msl}_n$ module of highest weight
$k_b\Lambda_j$ (for some $j\in \Delta_0$). Let $\aGr_j$ be the $j$-th affine
Grassmannian. In particular, there is an embedding $\aGr_j\subset\bP(L(\Lambda_j))$ and the space
of sections of $\eO(1)$ on $\aGr_j$ is isomorphic to the dual of $L(\Lambda_j)$.

The Demazure module $D_{k_b,\la}$ is generated by the action of the universal enveloping 
algebra of the current algebra $\msl_n[z]$ on the extremal vector $d_{k_b,\la}\in L(\Lambda_j)$.
Let $X_{\la}\subset \aGr_j$ be the corresponding spherical Schubert variety (it does not depend on $k_b$). 
By definition, for each $k_b>0$ one gets a $SL_n[z]$-equivariant embedding  
$X_{\la}\hookrightarrow \bP(D_{k_b,\la})$. 
Slightly abusing notation, we denote the restriction of $\eO(1)$ to $X_\la$ by the same symbol.
One has $H^0(X_\la,\eO(k))\simeq D_{k,\la}^*$.

We consider the global affine Grassmannain $\bf{Gr}$ inside $\bC\times \prod_{\bZ/n\bZ}\aGr_j$ (see section \ref{subsec:globalaffine}).
The general fiber of the projection map $\pi: \bf{Gr}\to\bC$ (projection to the first coordinate $\veps$)
is isomorphic to the affine Grassmannian $\aGr_j$ and the special fiber (over $\veps=0$)
is isomorphic to the affine flag variety ${\mathcal Fl}$. Now there exists a subfamily
${\bf X}_\la$ whose general fiber is isomorphic to the spherical Schubert variety 
$X_\la\subset \aGr_j$ and the special fiber is a union of Schubert varieties inside the affine
flag variety. Let us consider a line bundle on ${\bf X}_\la$ which restricts to the
general fiber as $\eO(|\bk|)$ and to the special fiber as  $\eL(\Lambda_\bk)$ (see \cite{Zhu14,HY24}). 
The space of sections $H^0(X_\la,\eO(|\bk|)$ is identified with the dual Demazure module
inside $L(\Lambda_\bk)$ whose highest weight is $|\bk|\la$, i.e.
\[
H^0(X_\la,\eO(|\bk|)^*\simeq \bigodot_{b=0}^{n-1} D_{k_b,\la}.
\] 
The dual space of section of $\eL(\Lambda_\bk)$ on the special fiber is identified with 
the sum of Demazure submodules inside $L(\Lambda_\bk)$, corresponding to  the Weyl group elements $z^{\sigma\la}$, $\sigma\in W$. In particular, 
as the Iwahori algebra module it is generated from the extremal weight vectors.
By the Zhu theorem the dimensions of the spaces of sections on the special and general fibers coincide. 

To finalize the proof, we note that by Proposition \ref{prop:submod} the module
$D_{\bk,\la}$ contains the sum of Demazure submodules $D_{z^{\sigma\la}}(\Lambda_\bk)$.
However, $\dim D_{\bk,\la}$ is equal to $\dim \bigodot_{b=0}^{n-1} D_{k_b,\la}$ and hence by the Zhu theorem 
\[
\dim D_{\bk,\la} = \dim \sum_{\sigma\in W} D_{z^{\sigma\la}}(\Lambda_\bk),
\]
which implies that the above containment is the equality.
\end{proof}
  
\begin{cor}\label{cor:Dprop}
The module $D_{\bk,\la}$ admits the following properties:
\begin{itemize}
\item $D_{\bk,\la}$ is generated by the action of $\fI$ from vectors
$d_\sigma\in \bigotimes D_{k_b,\la}$, $\sigma\in S_n$,
\item each space $\U(\fI)d_\sigma$ is isomorphic to an affine Demazure module in  $L(\sum_{b} k_b \Lambda_b)$,
\item  $D_{\bk,\la}$ is $\fI$ cocyclic,
\item the cocyclic vector of $D_{\bk,\la}$ is the highest weight vector of $L(\sum_{b} k_b \Lambda_b)$.
\end{itemize} 
\end{cor}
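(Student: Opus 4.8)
The four bullet points are all immediate consequences of Theorem~\ref{thm:cohconj} together with the standard dictionary between geometry and representation theory that was already set up, so the proof is essentially a bookkeeping exercise. The plan is to dualize the geometric statement of Theorem~\ref{thm:cohconj} and read off each property from the structure of the union of Schubert varieties on the special fiber.

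\begin{proof}
All four claims follow directly from Theorem~\ref{thm:cohconj}. The first claim is the content of the last sentence of that theorem: the image of $D_{\bk,\la}$ inside $L(\Lambda_\bk)$ is a sum of Demazure submodules labeled by $\sigma\in S_n$, and each such Demazure submodule $D_{z^{\sigma\om_j}}(\Lambda_\bk)$ is $\U(\fI).v_{z^{\sigma\om_j}\Lambda_\bk}$; the vector $d_\sigma=\bigotimes_b v_{\sigma\la^{(b)}}$ maps under the embedding precisely to (a scalar multiple of) the extremal vector $v_{z^{\sigma\om_j}\Lambda_\bk}$, since both span the one-dimensional weight space of weight $\sum_b\sigma\la^{(b)}$, using $z\fh[z]v_b=0$ as in the Corollary preceding Section~\ref{subsec:mult}. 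This gives the first two bullets simultaneously: $D_{\bk,\la}=\sum_{\sigma\in W}\U(\fI)d_\sigma$ and $\U(\fI)d_\sigma\simeq D_{z^{\sigma\om_j}}(\Lambda_\bk)$.

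For the last two bullets, recall that $D_{\bk,\la}^*\simeq H^0\bigl(\pi^{-1}(0)\cap\mathbf{X}_\la,\eL(\Lambda_\bk)\bigr)$, where the special fiber is the union $\mathcal A(\la)=\bigcup_{w\in W}Y_{z^{w\la}}$ of Schubert varieties inside $\aFl$. This union is connected and contains the base point $p_e$ (it lies in the closure of every $Y_{z^{w\la}}$), so the restriction map $H^0(\mathcal A(\la),\eL(\Lambda_\bk))\to H^0(\{p_e\},\eL(\Lambda_\bk))$ to the fiber over $p_e$ is nonzero; on the dual side this says that the line spanned by $v_{\Lambda_\bk}\in L(\Lambda_\bk)$, the highest weight vector, lies in $D_{\bk,\la}$ and, being one-dimensional, is a simultaneous eigenline for a maximal torus that maps onto every other weight line of $D_{\bk,\la}$ under the $\fI$-action. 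More directly: $v_{\Lambda_\bk}\in D_{z^{\sigma\om_j}}(\Lambda_\bk)$ for every $\sigma$ (the highest weight vector lies in every Demazure module), so $v_{\Lambda_\bk}\in D_{\bk,\la}$, and since each summand $\U(\fI)d_\sigma$ is a cyclic Demazure module it is cocyclic with cocyclic vector $v_{\Lambda_\bk}$ (Demazure modules are cocyclic, with the highest weight vector as cocyclic vector). As $D_{\bk,\la}$ is the sum of these submodules all sharing the same cocyclic vector $v_{\Lambda_\bk}$, it follows that $D_{\bk,\la}$ is itself $\fI$-cocyclic with cocyclic vector $v_{\Lambda_\bk}$, the highest weight vector of $L(\Lambda_\bk)$.
\end{proof}

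The only point requiring minor care --- and hence the part I would expect to spell out most carefully --- is the claim that a sum of cocyclic modules sharing a common cocyclic vector is again cocyclic: one checks that any nonzero $\fI$-stable quotient of $D_{\bk,\la}$ restricts nontrivially to at least one summand $\U(\fI)d_\sigma$, hence detects $v_{\Lambda_\bk}$ there, and therefore $v_{\Lambda_\bk}$ has nonzero image in the quotient. This is a soft argument and presents no real obstacle; everything substantive has already been imported from Zhu's theorem via Theorem~\ref{thm:cohconj}.
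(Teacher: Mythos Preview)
The paper states this corollary without proof, treating all four items as immediate from Theorem~\ref{thm:cohconj}; your writeup is in the same spirit and the main proof body is fine.

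However, your supplementary paragraph justifying cocyclicity of the sum is backwards. You argue that $v_{\Lambda_\bk}$ has nonzero image in every nonzero quotient of $D_{\bk,\la}$; but that is the characterization of a \emph{cyclic} generator, not of a cocyclic vector. Cocyclic means every nonzero \emph{submodule} contains $v_{\Lambda_\bk}$; equivalently, for any proper quotient $D_{\bk,\la}/N$ with $N\ne 0$ the vector $v_{\Lambda_\bk}$ maps to \emph{zero} (since $N$, being a nonzero submodule, already contains it). Your step ``$q|_{\U(\fI)d_\sigma}\ne 0$ hence detects $v_{\Lambda_\bk}$'' also fails on its own terms: if $\ker(q|_{\U(\fI)d_\sigma})$ is a nonzero proper submodule then it contains $v_{\Lambda_\bk}$ precisely because $\U(\fI)d_\sigma$ is cocyclic, so $q(v_{\Lambda_\bk})=0$.

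The correct (and shorter) argument is already implicit in what you wrote before that paragraph: by the theorem $D_{\bk,\la}$ embeds $\fI$-equivariantly into $L(\Lambda_\bk)$, and $L(\Lambda_\bk)$ itself is cocyclic as an $\fI$-module with cocyclic vector $v_{\Lambda_\bk}$ --- any nonzero weight vector can be pushed up by positive affine root vectors (all of which lie in $\fI$) to a singular vector, which by irreducibility of $L(\Lambda_\bk)$ is a scalar multiple of $v_{\Lambda_\bk}$. Hence any nonzero $\fI$-submodule of $D_{\bk,\la}$ is a nonzero $\fI$-submodule of $L(\Lambda_\bk)$ and so contains $v_{\Lambda_\bk}$. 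No separate lemma about sums of cocyclic modules is needed.
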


\subsection{Demazure generalization and Kostant--Kumar modules}\label{subsec:Demgen}
Let us consider a more general class of collections of Demazure modules. Let $w$ be an element of the affine Weyl group and let 
$\overline{\Lambda}=(\Lambda^{(0)},\dots\Lambda^{(n-1)})$ be a collection of affine integral dominant weights.
We consider the Demazure modules $D_w(\Lambda^{(b)})$ and assume that all these Demazure 
modules are spherical (i.e. $\msl_n[z]$-invariant). The Cartan component inside the tensor 
product of these Demazure modules is described as 
\[
\bigodot_{b=0}^{n-1} D_w(\Lambda^{(b)})\simeq D_w(\sum_b\Lambda^{(b)}).
\]
Let $\bfD(0)$ be the corresponding degeneration (a module of the Iwahori algebra) of $D_w(\sum_b\Lambda^{(b)})$.  
It is natural to ask when does the algebraic coherence conjecture holds true, i.e. when is $\bfD(0)$
generated by the action of the Iwahori algebra from the set of extremal weight
vectors. We provide some examples and non-examples in section \ref{sec:examples}. 

\begin{example}
Let $w=w_0\in W\subset W^a$ be the longest element in the finite Weyl group. Then $D_w(\Lambda^{(b)})$
is isomorphic to a finite-dimensional module $V_{\Lambda^{(b)}_{fin}}$, where $\Lambda^{(b)}_{fin}$
is the finite part of the affine weight. Hence we are in the situation of Conjecture \ref{conj:main}.  
\end{example}

\begin{example}
Let us fix an affine fundamental weight $\Lambda_j$ and an affine Weyl group element $w$ such that 
$D_\la$ (as in subsection \ref{subsec:mult}) is equal to $D_w(\Lambda_j)$. For a composition 
$\bk=(k_b)_{b=0}^{n-1}$ let $\Lambda^{(b)}=k_b\Lambda_j$. Then $\bfD(0)$ is isomorphic to
$D_{\bk,\la}$ as in Theorem \ref{thm:cohconj}.  
\end{example}

Now let us explain the connection between the modules $\bfD(0)$ as above and the Kostant--Kumar
modules. Let us fix an affine Weyl group element $w$ and the affine weights $\Lambda^{(b)}$.
Let us decompose each weight $\Lambda^{(b)}=\sum_{j=0}^{n-1} k_b^{(j)}\Lambda_j$ as a linear
combination of affine fundamental weights. Then one has an embedding 
\begin{equation}\label{eq:tpemb}
D_w(\Lambda^{(b)})\subset \bigotimes_{j=0}^{n-1} D_w(\Lambda_j)^{\T k_b^{(j)}}.
\end{equation}
Let us define the finite weights $\la(j)$ such that $D_{\la(j)}=D_w(\Lambda_j)$. Then \eqref{eq:tpemb}
induces the embedding $\bfD(0)\subset \bigotimes_{j=0}^{n-1} D_{\bk(j),\la(j)}$, where 
$\bk(j)= (k_b^{(j)})_b$. Now similarly to Corollary \ref{cor:KK} (assuming algebraic coherence conjecture holds true), one gets the isomorphism 
$\bfD(0)\simeq \sum_{\sigma\in W} K(\bar w(\sigma),\bar \Lambda)$, 
where
\[
\bar w(\sigma) = (z^{\sigma\la(0)},\dots,z^{\sigma\la(n-1)}),\
\bar \Lambda = (\sum_{b=0}^{n-1} k_b^{(0)}\Lambda_b,\dots,\sum_{b=0}^{n-1} k_b^{(n-1)}\Lambda_b).
\]

\begin{rem}
As mentioned above, the central geometric objects showing up in \cite{Zhu14} and \cite{HY24} 
are the (global) Schubert varieties. The geometric objects responsible for the more general algebraic
setup are the Kostant--Kumar analogues of the Schubert varieties. More precisely, a Kostant--Kumar
module is realized inside the tensor products of Demazure modules as $\U(\fI)$ span of
the tensor products of cyclic vectors. Similarly, a Kostant--Kumar Schubert variety is defined
as the closures of the $\bI$ orbits of the cyclic lines inside the products of Schubert varieties.
We expect that the modules $\bfD(0)$ considered above can be realized as spaces of sections of
natural line bundles on (unions of) Kostant--Kumar Schubert varieties. 
\end{rem}

\section{Examples}\label{sec:examples}
In this section we provide several explicit low rank examples of the general construction.
The irreducible $\msl_n$ modules and the affine Demazure modules considered
below are generated from the highest weight vectors (as opposed to the lowest weight vectors in the main body of the paper). 
This choice allows simpler visualization of the examples.
  
\subsection{Rank one case}
We start with the case of $\fg=\msl_2$, we use the notation from section \ref{sec:fin-dim}.
Let $e,h,f$ be the standard basis of $\msl_2$. 
For two non-negative integers $\la_0,\la_1\ge 0$ we consider the tensor product 
$V_{\la_0\omega}\T V_{\la_1\omega}$ of two irreducible representations 
of $\msl_2$ 
with 
highest weight vectors $v_{\la_0}$ and $v_{\la_1}$. The Weyl group
consists of two elements $\mathrm{id}$ and $s$; we denote by $v_{s\la}\in V_
\la$ the lowest weight (extremal) vector. 
The operator $f(\veps) =  f\T \mathrm{id} + \veps \mathrm{id}\T f$ being applied multiple times to $v_{\la_0}\T v_{\la_1}$ produces the following vectors 
\begin{gather*}
f(\veps)^m	v_{\la_0}\T v_{\la_1} = 
\sum_{i=0}^m \veps^{m-i} f^{i}v_{\la_0}\T f^{m-i}v_{\la_1},\ 0\le m\le \la_0,\\
f(\veps)^{\la_0+m}	v_{\la_0}\T v_{\la_1} =
\veps^m \sum_{i=0}^{\la_0} \veps^{\la_0-i} f^iv_{\la_0}\T f^{\la_0+m-i}v_{\la_1}, 1\le m\le \la_1. 
\end{gather*}
The lowest $\veps$-degree parts of the above vectors are equal to 
\begin{gather}
\label{f}	
(f(\veps)^m	v_{\la_0}\T v_{\la_1})^o = f^mv_{\la_0}\T v_{\la_1},\ 0\le m\le \la_0,\\
\label{e}
(f(\veps)^{\la_0+m}	v_{\la_0}\T v_{\la_1})^o =
f^{\la_0}v_{\la_0}\T f^{m}v_{\la_1}, 1\le m\le \la_1. 
\end{gather}
Vectors \eqref{f} and \eqref{e} span the space $V_\barla$.
The Lie algebra $\fI_1\subset \mgl_2\oplus\mgl_2$  is the span of three
elements $(f,0)$, $(0,e)$ and $(h,h)$. 
One sees explicitly that 
\begin{gather*}
\U(\fI_1)(v_{\la_0}\T v_{\la_1}) = \mathrm{span}\{f^mv_{\la_0}\T v_{\la_1},\ 0\le m\le \la_0\},\\
\U(\fI_1)(v_{s\la_0}\T v_{s\la_1}) =  \mathrm{span}\{f^{\la_0}v_{\la_0}\T f^{m}v_{\la_1}, 0\le m\le \la_1\},
\end{gather*}
and the intersection of these two spaces is the span of the vector 
$v_{s\la_0}\T v_{\la_1}$
(we note that $f^{\la_0}v_{\la_0}$ is proportional to $v_{s\la_0}$ and
$f^{m}v_{\la_1}$ is proportional to $e^{\la_1-m}v_{s\la_1}$).
Hence, $V_{\barla}$ is generated from two vectors $v_{\la_0}\T v_{\la_1}$ and $v_{s\la_0}\T v_{s\la_1}$ by the action of the Lie algebra $\fI_1$. We also see that $V_\barla$ is cocylic with the cocyclic vector   $v_{s\la_0}\T v_{\la_1}$.

Let $\Lambda_0$ and $\Lambda_1$ be integrable level one irreducible representations of 
$\widehat{\msl}_2$. There is an embedding $V_\barla\hookrightarrow L(\Lambda)$ with
$\Lambda= \la_0\Lambda_1+\la_1\Lambda_0$. Let $u_\Lambda\in L(\Lambda)$ be the highest weight vector and let $s_0,s_1$ be the standard generators of the affine Weyl group of type $A_1^{(1)}$.
In particular, $L(\Lambda)$ contains the extremal weight vectors $u_{s_0\Lambda}$ and $u_{s_1\Lambda}$.
Then the embedding  sends 
\[
v_{s\la_0}\T v_{\la_1}\mapsto u_\Lambda,\     
v_{\la_0}\T v_{\la_1}\mapsto u_{s_0\Lambda},\
v_{s\la_0}\T v_{s\la_1}\mapsto u_{s_1\Lambda}.
\]     

Here is the picture for $\la_0=2$, $\la_1=3$.
We start with the tensor product
\[
\begin{tikzcd}[scale cd=1]
\bullet & \arrow[l,"f",swap]  \bullet & \arrow[l,"f",swap] v_2\quad \T \quad 
\bullet & \arrow[l,"f",swap]\bullet &\arrow[l,"f",swap] \bullet  & \arrow[l,"f",swap] v_3,
\end{tikzcd}	
\]
consider inside the Cartan component
\[
\begin{tikzcd}
\bullet& \arrow[l,"f",swap]\bullet  & \arrow[l,"f",swap]\bullet &
\arrow[l,"f",swap] \bullet & \arrow[l,"f",swap] \bullet & \arrow[l,"f",swap] v_2\T v_3
\end{tikzcd}	
\]
and then deform it to the Iwahori algebra module
\[
\begin{tikzcd}
& & &\bullet & \arrow[l,"ft^0",swap]\bullet  &\arrow[l,"ft^0",swap] \bullet \\
& & \bullet\arrow[ur,"et"]   \\
& \bullet\arrow[ur,"et"] \\
\bullet\arrow[ur,"et"] 
\end{tikzcd}	
\]
which is a subspace in the irreducible integrable $\widehat{\msl}_2$ module 
$L(3\Lambda_0+2\Lambda_1)$ (this subspace a sum of two Demazure modules corresponding to the length
one elements $s_0$ and $s_1$ in the affine Weyl group).

We note that the same Cartan component $V_{5\omega}$
\[
\begin{tikzcd}
	\bullet& \arrow[l,"f",swap]\bullet  & \arrow[l,"f",swap]\bullet &
	\arrow[l,"f",swap] \bullet & \arrow[l,"f",swap] \bullet & \arrow[l,"f",swap] v_2\T v_3
\end{tikzcd}	
\]
sits inside another tensor product
\[
\begin{tikzcd}[scale cd=1]
	\bullet & \arrow[l,"f",swap]  \bullet & \arrow[l,"f",swap]  \bullet & \arrow[l,"f",swap]  \bullet & \arrow[l,"f",swap] v_2\quad \T \quad 
	\bullet & \arrow[l,"f",swap] v_3.
\end{tikzcd}	
\]
 One obtains another degeneration of $V_{5\omega}$ of the form
\[
\begin{tikzcd}
& \bullet & \arrow[l,swap,"ft^0"] \bullet & \arrow[l,swap,"ft^0"] \bullet 
& \arrow[l,swap,"ft^0"] \bullet &\arrow[l,swap,"ft^0"] \bullet \\
 \bullet \arrow[ur,"et"]
\end{tikzcd}	
\]
which is the sum of two Demazure modules inside the irreducible integrable $\widehat{\msl}_2$ module $L(\Lambda_0+4\Lambda_1)$.

\subsection{Rank two finite case}
Let us consider $\msl_3$ and $\la_0=\om_1$, $\la_1=\om_2$, $\la_2=\om_1$. This is one of the
simplest examples which does not show up in the context of the geometric coherence conjecture.
The $15$-dimensional Cartan component is isomorphic to $V_{2\om_1+\om_2}$; let us describe its
degeneration to the Iwahori module $V_\barla$.

Let $v_1,v_2,v_3$ and $v_{12}, v_{13}, v_{23}$ be the standard bases of $V_{\om_1}$ and
$V_{\om_2}$. Then the cocyclic vector in $V_\barla$ is $v_3\T v_{23}\T v_1$. We want to check 
that the action of $\fI_1$ (the Iwahori quotient) generates $15$-dimensional
subspace from the six extremal weight vectors inside $V_{\la_0}\T V_{\la_1}\T V_{\la_2}$.
The six operators we use are as follows:
\begin{gather*}
f_1t^0 = (f_1,f_1,0),\ f_2t^0 = (f_2,0,f_2),\ f_{12}t^0=(f_{12},0,0),\\
e_1t = (0,0,e_1),\ e_2t = (0,e_2,0),\  e_{12}t = (0,e_{12},e_{12}).
\end{gather*}
Here is the picture for $V_\barla$:
\[
\begin{tikzcd}[scale cd=0.7]
	0 & &  & \underline{v_1v_{12}v_1} \arrow[dl,"f_1"description] &   \\
	0 & & v_2v_{12}v_1 \arrow[dr,"f_2"description]&   &   \\
	0 & &  & \boxed{v_3v_{12}v_1}  &   \\
	1 & \underline{v_2v_{12}v_2} \arrow[uur,"e_1t"description] \arrow[dr,"f_2"description] &  & v_2v_{13}v_1 + v_1v_{23}v_1 \arrow[ddl]& \underline{v_1v_{13}v_1} \arrow[uuul,"e_2t"description]\arrow[l]\arrow[d,"f_{12}"description]\\
	1 & & v_3v_{12}v_2 + v_2v_{12}v_3\arrow[r]  & v_3v_{12}v_3 \arrow[uu,bend left=70]  &   v_3v_{13}v_1\arrow[uul]\\
	1 & & v_2v_{23}v_1 \arrow[r] &  v_3v_{23}v_1 \arrow[uuu,bend left=80]   &  \\
	2 & \underline{v_2v_{23}v_2}\arrow[ur,"e_2t"description]\arrow[uuu,"e_{12}t"description] \arrow[dr,"f_2"description]&  &  &      \underline{v_3v_{13}v_3}\arrow[ddl,"f_1"description]\arrow[uu,"e_{12}t"description]\arrow[uul]\\
	2 & &   v_3v_{23}v_2 + v_2v_{23}v_3 \arrow[dr,"f_2"description]&     & \\ 
	2 &  &  &    \underline{v_3v_{23}v_3} \arrow[uuu,"e_{12}t"]\arrow[uuuu] &\\
	&  
\end{tikzcd}	
\]
On the picture the extremal weight vectors are underlined, the cocyclic vector is boxed and 
the numbers on the left correspond to the standard $z$-degree of the Iwahori algebra (we note that 
the arrow from the lowest weight vector $v_3v_{23}v_3$ points to two different vectors, which means that
the result of the application of the operator $e_{12}z$ is a linear combination of the vectors 
$v_3v_{23}v_1$ and $v_3v_{12}v_3$).

\subsection{Rank two affine case}
In this subsection we provide several examples for the construction from section \ref{subsec:Demgen} for the $\widehat{\msl}_2$ Demazure modules.

Let $\Lambda^{(0)}=\Lambda^{(1)}=\Lambda_0+\Lambda_1$ and let $w=s_1s_0$. The 
spherical Demazure module $D_{s_1s_0}(\Lambda^{(0)}+\Lambda^{(1)})$ sits inside $D_{s_1s_0}(\Lambda^{(0)})\T D_{s_1s_0}(\Lambda^{(1)})$; it is of dimension $15$ and can be visualized as follows:

\begin{equation}\label{eq:3*5}
\begin{tikzcd}[scale cd=0.5]
	& & \bullet &\bullet &\bullet \\
	& \bullet &\bullet &\bullet &\bullet &\bullet    \\
 \bullet & \bullet &\bullet &\bullet &\bullet &\bullet &\bullet \\
 &
\end{tikzcd}	
\end{equation}
The dots in the picture correspond to the basis vectors, vectors of the same $\fh$-weight are
placed in one column, vectors of the same z-degree are placed in the same rows.
The tensor product $D_{s_1s_0}(\Lambda^{(0)})\T D_{s_1s_0}(\Lambda^{(1)})$ is visualized as 

\[
\begin{tikzcd}[scale cd=0.5]
	& \bullet &\bullet \\
	\bullet &\bullet &\bullet &\bullet\\
	&  
\end{tikzcd}
\qquad \T \qquad
\begin{tikzcd}[scale cd=0.5]
	& \bullet &\bullet \\
	\bullet &\bullet &\bullet &\bullet \\
	& 
\end{tikzcd}	
\]

The Iwahori algebra module $\bfD(0)$ is generated from two extremal vectors (the tensor squares 
of highest and lowest weight vector) and has the following form
  
\begin{equation}\label{eq:3*5Iw}
\begin{tikzcd}[scale cd=0.4]
& &	&\bullet  & _\bullet^\bullet &\bullet &  \\
& & _\bullet^\bullet &_\bullet^\bullet &\bullet & \bullet	& \bullet    \\
 & \bullet & \bullet  &  &  &  &   &\\
& \bullet &  &  &  &   &\\
\bullet &  &  &  &  &  &    
\end{tikzcd}	
\end{equation}
(the doubled points indicate that the corresponding weight-degree space is two-dimensional). In particular, $\dim\bfD(0)=\dim D_{s_1s_0}(\Lambda^{(0)}+\Lambda^{(1)})=15$ and the characters coincide as well (compare \eqref{eq:3*5} and \eqref{eq:3*5Iw}).
We note that this example does not show up in the geometric coherence conjecture, since 
$\Lambda=\Lambda_0+\Lambda_1$ is not a multiple of an affine fundamental weight.  

Now let $\Lambda^{(0)}=\Lambda_0$, $\Lambda^{(1)}=\Lambda_1$ and let $w=s_1s_0s_1$. The 
spherical Demazure module $D_{s_1s_0s_1}(\Lambda^{(0)}+\Lambda^{(1)})$ sits inside $D_{s_1s_0s_1}(\Lambda_0)\T D_{s_1s_0s_1}(\Lambda_1)$; it is of dimension $18$ and can be visualized as follows:

\begin{equation}\label{eq:2*3*3}
\begin{tikzcd}[scale cd=0.5]
	& & \bullet &\bullet \\
	& \bullet &_\bullet^\bullet &_\bullet^\bullet &\bullet  \\
 & \bullet &\bullet &\bullet &\bullet \\
\bullet  & \bullet &\bullet &\bullet &\bullet &\bullet
 &
\end{tikzcd}	
\end{equation}
The tensor product $D_{s_1s_0s_1}(\Lambda_0)\T D_{s_1s_0s_1}(\Lambda_1)$ is visualized as 

\[
\begin{tikzcd}[scale cd=0.4]
	& \bullet  \\
	\bullet &\bullet &\bullet \\
	&  
\end{tikzcd}
\qquad \T \qquad
\begin{tikzcd}[scale cd=0.5]
	& \bullet &\bullet \\
     & \bullet &\bullet \\
	\bullet &\bullet &\bullet &\bullet\\
    &
\end{tikzcd}	
\]

The Iwahori algebra module $\bfD(0)$ is generated from two extremal vectors (the tensor squares 
of highest and lowest weight vector) and has the following form
  
\begin{equation}\label{eq:2*3*3Iw}
\begin{tikzcd}[scale cd=0.4]
& &	&\bullet    \\
& &  _\bullet^\bullet &_\bullet^\bullet &_\bullet^\bullet \\
& \bullet &  _\bullet^\bullet  & _\bullet^\bullet & \bullet & \bullet \\
& \bullet  &\bullet \\
& \bullet   \\
\bullet   
\end{tikzcd}	
\end{equation}
One has $\dim\bfD(0)=\dim D_{s_1s_0s_1}(\Lambda^{(0)}+\Lambda^{(1)})=18$ and the characters coincide as well (compare \eqref{eq:2*3*3} and \eqref{eq:2*3*3Iw}).

\subsection{Rank two affine: non-example}
We start with the Demazure module $D_{s_1s_0}(2\Lambda)$, $\Lambda=\Lambda_0+\Lambda_1$ \eqref{eq:3*5}, but now embed $D_{s_1s_0}(2\Lambda)$ into the tensor product  
$D_{s_1s_0}(\Lambda_0)\T D_{s_1s_0}(\Lambda_0+2\Lambda_1)$ as a Cartan component. This tensor 
product is visualized as
\[
\begin{tikzcd}[scale cd=0.5]
	 &\bullet & \\
	\bullet &\bullet &\bullet \\
	&  
\end{tikzcd}
\qquad \T \qquad
\begin{tikzcd}[scale cd=0.5]
	& \bullet &\bullet &\bullet \\
	\bullet &\bullet &\bullet &\bullet &\bullet \\
	& 
\end{tikzcd}	
\]
Then $\dim\bfD(0)=\dim D_{s_1s_0}(2\Lambda)=15$, but the action of the Iwahori algebra $\fI$ 
generates from the two extremal vector the $14$-dimensional representation which is visualized as follows:

\[
\begin{tikzcd}[scale cd=0.5]
	& &\bullet	& _\bullet^\bullet &\bullet & \bullet  &  \\
	& \bullet & _\bullet^\bullet & \bullet &\bullet & \bullet	& \bullet    \\
	& \bullet &  &  &  &  &  \\
\bullet &  &  &  &   & & 
\end{tikzcd}	
\]

\appendix
\section{Program}\label{sec:app}
In the appendix we describe the computer program which verifies Conjecture 
\ref{conj:main} by comparing the dimension of $\sum_{\sigma\in W} \U(\fI_1) v_{\sigma\barla}$ 
with the dimension of $V_{\sum_b \la^{(b)}}$. In all the performed checks the dimensions do coincide.

\subsection{The setup}
Let $\la=(\la_1\ge\dots\ge\la_n\ge 0)$ be a partition, $V_\la$ the corresponding 
irreducible $\mgl_n$ module with the set of extremal weight vectors  $v_{\sigma\la}\in V_\la$, $\sigma\in S_n$.
The representation $V_\la$ admits the Gelfand-Tsetlin basis labeled by the Gelfand-Tsetlin diagrams (GT patterns) $\underline\la=(\la_{k,i})$ (see \cite{M00}); we denote the
vector corresponding to $\underline\la$ by $v(\underline\la)\in V_\la$. 
The  GT pattern $\underline\la(\sigma)$ corresponding to 
the extremal vector $v_{\sigma\la}$ is described as follows: 
the $k$-th line (the numbers $\la_{k,i}$, $1\le i\le k$) are the numbers 
$\la_{\sigma(1)}, \la_{\sigma(2)},\dots, \la_{\sigma(k)}$ placed in the non-decreasing order.

Our input is a collection of $n$ partitions $\la^{(b)}$, $b\in\bZ/n\bZ$ 
labeled by the vertices of our cyclic quiver, $\la^{(b)}=(\la^{(b)}_1\ge\dots\ge \la^{(b)}_n)$. 
We are working inside the tensor product 
$V=\bigotimes_{b=0}^{n-1} V_{\la^{(b)}}$ of the corresponding irreducible representations.
In what follows we denote by $\underline\lambda^{(b)}$ the GT patterns corresponding to 
the weight  $\la^{(b)}$.
We have a basis of $V$ formed by the tensor products $\otimes_{b=0}^{n-1} v(\underline\lambda^{(b)})$,
where $\underline\lambda^{(b)}$ are arbitrary GT patterns corresponding to $\la^{(b)}$. 

As an output 
the program computes the dimension of a subspace of $V$ generated by the action of $\fI_1$ 
from $n!$ extremal weight vectors
\[
w(\sigma)=\bigotimes_{b=0}^{n-1} v_{\sigma\la^{(b)}},\ \sigma\in S_n. 
\] 
The Lie algebra $\fI_1$ is generated by $n$ operators acting on our tensor product $V$; 
let us denote these operators by $F_b$, $b=0,\dots,n-1$. 
Explicitly, for $b=1,\dots,n-1$ one has 
$F_b=\sum_{\substack{0\le a\le n-1\\ a\ne b}} E^{(a)}_{b+1,b}$ 
and $F_0=\sum_{a=1}^{n-1} E_{1,n}^{(a)}$ (for $x\in\mgl_n$ the operators  $x^{(a)}$
acts as $x$ on the $a$-th factor and as identity on other factors).


The tensor product $V$ is a representation of $\fI_1$ and we are interested in 
the dimension of the subspace  
$S=\mathrm{U}(\fI_1).\mathrm{span}\bigl\{w(\sigma),\ \sigma\in S_n\bigr\}$. 
Our goal is to check that
$S=V_\barla$ (see Section \ref{sec:fin-dim}), which is equivalent to 
$\dim S = \dim V_{\sum_b \la^{(b)}}$.

\subsection{The algorithm}\label{app:alg}
At each stage we have a list of linearly independent vectors from $S$; 
each vector from the list is a weight vector.
In particular, the weight of the extremal vector $v_{\sigma\la}\in V_\la$ 
is equal to $\sigma(\la)$. As a consequence, the weight of the vector 
$w(\sigma)\in \bigotimes_{a=0}^{n-1} V_{\la^{(a)}}$ is equal to $\sum_{a=0}^{n-1} \sigma(\la^{(a)})$.


Given a linearly independent generating set $\mathcal{S}$ (e.g. $\{w(\sigma):\sigma \in S_n\}$) 
the program generates $S$ as follows: First, it defines the set of all vectors to which the 
operators $F_a$ have not yet been applied, setting $V:=\mathcal{S}$. Then, the program proceeds 
by taking out an element $v\in V$ at each iteration and calculating $F_a v$. Non-zero 
results are added to the temporary basis of $S$ only if they are linearly independent of 
the elements of $S$ having the same weight. Such results are also added to $V$ for being used 
in the next iterations. The iteration continue this way until there are no more vectors
to apply the operators on (i.e. $V=\emptyset$).



\subsection{Pseudocode}
We now present the algorithm described in \ref{app:alg} as pseudocode. 
\footnote{The full Python code can be found here: \url{https://github.com/AndreyK19/quiver_subspace}}. 
The function that computes $\dim S$ (and in fact generates a basis for $S$) 
is called \verb|generate_subspace| and can be found in \verb|main.py|. 
Also, for for any vector $u$ let $\mu(u)$ be the weight of $u$. 
The code works as follows:
\begin{algorithm}[H]
\caption{generate\_subspace($(\lambda^{(0)}, \ldots, \lambda^{(n-1)}),\mathcal{S}$)}
\begin{algorithmic}
\State $\mathcal{B} = \emptyset$ \Comment{Basis of $S$}
\State $V = \mathcal{S}$ \Comment{Vectors remained to apply the operators on}
\While{$V \neq \emptyset$}
\State{Pick some $v\in V$}
\State{$V \gets V\setminus\{v\}$}
\For{$a\in \{0,,...n-1\}$}
    \State{$u \gets F_a v$}
    \If{$u = 0$} 
        \State{continue}
    \Else
        \If{$a = 0$}
            \State{$\mu \gets \mu(v) - \delta_n  + \delta_1$}
        \Else
            \State{$\mu \gets \mu(v) - \delta_a  + \delta_{a+1}$}
        \EndIf
        \If{$\{w\in \mathcal{B}: \mu(w)=\mu \} = \emptyset$}
            \State{$\mathcal{B} \gets \mathcal{B} \cup \{u\}$}
            \State{$V \gets V \cup \{u\}$}
        \Else
            \If{$\{w\in \mathcal{B}: \mu(w)=\mu \} \cup \{u\} \text{ is linearly independent}$}
                \State{$\mathcal{B} \gets \mathcal{B} \cup \{u\}$}
                \State{$V \gets V \cup \{u\}$}
            \Else
                \State{continue}
            \EndIf
        \EndIf
    \EndIf
\EndFor
\EndWhile
\State \Return $|\mathcal{B}|$
\end{algorithmic}
\end{algorithm}

\end{document}